\documentclass[fleqn,preprint,3p,a4paper]{elsarticle}
\usepackage{amssymb}
\usepackage{amsmath}
\usepackage{amsthm}
\usepackage{dcolumn}
\usepackage{endnotes}
\usepackage{tabularx}
\usepackage[matrix,arrow]{xy}
\usepackage{wasysym}
\usepackage{graphicx}

\newtheorem{theorem}{Theorem}[section]
\newtheorem{proposition}[theorem]{Proposition}
\newtheorem{lemma}[theorem]{Lemma}
\newtheorem{corollary}[theorem]{Corollary}

\theoremstyle{definition}
\newtheorem{definition}[theorem]{Definition}
\newtheorem{example}[theorem]{Example}
\newtheorem{remark}[theorem]{Remark}
\newtheorem{question}[theorem]{Question}

\newcommand{\ir}{{\mathsf{Irr}}}

\newcommand{\cl}{{\rm cl}}
\newcommand{\ii}{{\rm int}}

\newcommand{\ua}{{\uparrow}}
\newcommand{\da}{{\downarrow}}

\begin{document}

\begin{frontmatter}

\title{On strong $R$-spaces\tnoteref{t1}}
\tnotetext[t1]{This research was supported by the National Natural Science Foundation of China (Nos. 12471070, 12071199, 12261040), the second author was supported by the Project of Suqian Sci\&Tech Program (Grant No. K202441), the Foundation of PhD start-up of Suqian University (Nos. 2024XRC003).}

\author[X. Wen]{Xinpeng Wen}
\ead{wenxinpeng2009@163.com}
\address[X. Wen]{College of Mathematics and Information, Nanchang Hangkong University, Jiangxi 330063, China}

\author[M. Bao]{Meng Bao}
\ead{mengbao95213@163.com}
\address[M. Bao]{School of Mathematics, Suqian University, Suqian 223800, China}

\author[X. Xu]{Xiaoquan Xu\corref{mycorrespondingauthor}}
\cortext[mycorrespondingauthor]{Corresponding author}
\ead{xiqxu2002@163.com}
\address[X. Xu]{School of Mathematics and Statistics,
Minnan Normal University, Zhangzhou 363000, China}

\begin{abstract}
In this paper, we mainly investigate some basic properties of strong $R$-spaces. It is shown that the property of being a strong $R$-space is closed-hereditary, saturated-hereditary and retractive, but not finite productive. Hence the category $\mathbf{S}$-$\mathbf{Top}_r$ of strong $R$-spaces and continuous mappings is not reflective in  the category $\mathbf{Top}_0$ of $T_0$-spaces and continuous mappings. It is proved that a $T_0$-space $(X, \tau)$ is a strong $R$-space iff every nonempty $\tau$-closed subset of $X$ is compact in $(X, \tau^{d})$, where $\tau^d$ is the de Groot dual of $\tau$; consequently, if $(X, \tau)$  is a strong $R$-space (especially, if $(X, \tau)$ is a coherent well-filtered space), then $\tau \subseteq \tau^{dd}$. Therefore, for any locally compact strong $R$-space $(X, \tau)$, we have $\tau=\tau^{dd}$. Finally, we investigate conditions under which the Smyth power space and Scott power space of a $T_0$-space is a strong $R$-space. Several such conditions are given.
\end{abstract}

\begin{keyword}
Well-filtered space; Strong $R$-space; de Groot dual topology; Smyth power space; Scott power space

\MSC 54D99; 54B20; 06B35; 06F30

\end{keyword}

\end{frontmatter}

\section{Introduction}

In domain theory and non-Hausdorff topology, the $d$-spaces, well-filtered spaces and sober spaces form three of the most important classes of $T_0$-spaces (cf. \cite{Abramsky-Jung-1994, GHKLMS-2003, Goubault-2013, Jia-Jung-2016, Xu-Zhao-2021}). In order to uncover more finer links between $d$-spaces and $T_2$-spaces, the $R$-spaces, strongly well-filtered spaces and strong $d$-spaces were introduced and investigated in \cite{Lawson-Xu-2024-1, Xu-2026, Xu-Zhao-2020}. These three types of $T_0$-spaces possess many important properties (see \cite{Lawson-Xu-2024-1, Lawson-Xu-2024-2, Li-Jin-Miao-Chen-2022, Xu-2026, Xu-Zhao-2020, Xu-Zhao-2021}), three of the key ones being: (1) the Scott space $\Sigma~\!\!P$ of a poset $P$ is a strong $d$-space iff it is strongly well-filtered (see \cite[Theorem 4.10]{Xu-2026}); (2) if $P$ is a dcpo such that $\Sigma P$ is strongly well-filtered and $\Sigma (P\times P)=\Sigma P\times\Sigma P$, then $\Sigma P$ is sober (see \cite[Theorem 5.11]{Lawson-Xu-2024-1}); and (3) a $T_0$-space $X$ is an $R$-space iff all closed subsets of $X$ are compact in the lower topology of $X$ equipped with the specialization order (see \cite[Theorem 35]{Lawson-Xu-2024-2}).

In order to address two open problems concerning strongly well-filtered spaces posed in \cite{Xu-2026}, Xu et al. \cite{Xu-Yang-Chen-2026} introduced a new class of $T_0$-spaces --- strong $R$-spaces, which are stronger than both $R$-spaces and strongly well-filtered spaces. Some relations among $T_2$-spaces, $T_1$-spaces, strong $R$-spaces, $R$-spaces, strongly well-filtered spaces, well-filtered spaces and sober spaces were investigated in \cite{Xu-Yang-Chen-2026}.

The main purpose of this paper is to investigate some basic properties of strong $R$-spaces. The paper is organized as follows.

Section 2 provides some fundamental definitions and notations about topology and lattice-ordered structures which will be used in the whole paper. Also a few basic properties of the Alexandroff topology on posets and reflective full subcategories of $\mathbf{Top}_0$ (the category of $T_0$-spaces and continuous mappings) are listed.

In Section 3, we briefly recall the concepts and some basic results of (strong) $d$-spaces, (strongly) well-filtered spaces and sober spaces. It is shown that every Noetherian poset equipped with the Alexandroff topology is strongly well-filtered.

In Section 4, we first recall the notions of $R$-spaces and strong $R$-spaces. Then we give two examples to show that a strongly well-filtered space may not be an $R$-space and a $T_1$-space (hence an $R$-space) may not be a strong $R$-space. It is shown that every complete lattice equipped with the Scott topology is a strong $R$-space. We prove that a $T_0$-space $(X, \tau)$ is a strong $R$-space iff every nonempty $\tau$-closed subset of $X$ is compact in $(X, \tau^{d})$, where $\tau^d$ is the de Groot dual of $\tau$; consequently, if $(X, \tau)$  is a strong $R$-space (especially, if $(X, \tau)$ is a coherent well-filtered space), then $\tau \subseteq \tau^{dd}$. Hence for any locally compact strong $R$-space $(X, \tau)$, we have $\tau=\tau^{dd}$, and the condition that $(X,\tau)$ is a strong $R$-space can not be weakened to
that of $(X,\tau)$ is an $R$-space or the strong well-filteredness of $(X,\tau)$.

Section 5 is mainly devoted to investigate some basic properties of strong $R$-spaces. It is shown that the property of being a strong $R$-space is closed-hereditary and saturated-hereditary, and every retract of a strong $R$-space is a strong $R$-space. We give two Scott spaces which are strong $R$-spaces but their product space is not a strong $R$-space. Hence the category $\mathbf{S}$-$\mathbf{Top}_r$ of strong $R$-spaces and continuous mappings is not reflective in $\mathbf{Top}_0$.

In Section 6, we investigate conditions under which the Smyth power space and Scott power space of a $T_0$-space is a strong $R$-space.  Several such conditions are given. In particular, we prove that for a coherent well-filtered space $X$, its Scott power space $P_{\sigma}(X)$  is a coherent well-filtered space, and consequently, $P_{\sigma}(X)$ is a strong $R$-space. It is shown that if the Smyth power space $P_S(X)$ of a $T_0$-space $X$ is an R-space, then $X$ is a strong $R$-space. So the Smyth power construction does not preserve the property of being an $R$-space in general. A second-countable Noetherian $T_0$-space $X$ is given for which the Scott power space $P_{\sigma}(X)$ is a strong $R$-space but $X$ is not a strong $R$-space.

In the final section, we present conclusions and some further work. Two questions concerning whether the Smyth power construction and Scott power construction preserve the property of being a strong $R$-space are posed.

\section{Preliminaries}
In this section, we briefly recall some basic concepts and results about topology and domain theory that will be used in the paper. For further details, we refer the reader to \cite{Abramsky-Jung-1994, GHKLMS-2003, Goubault-2013}.

For a poset $P$ and $A \subseteq P$, define ${\ua} A=\{x\in P: a\leq x \mbox{ for some }a\in A \}$ and ${\da} A=\{x \in P: x \leq a \mbox{ for some }a\in A\}$. For $x\in X$, let ${\ua} x={\ua}\{x\}$ and ${\da} x={\da}\{x\}$. A subset $A$ is called a \emph{lower set} (resp., an \emph{upper set}) if $A={\da} A$ (resp., $A={\ua} A$). Let $\mathbf{down}~\!P=\{{\downarrow} A :  A \mbox{ is a nonempty set of } P\}$ and  $\mathbf{Fin}~\!Q=\{{\ua} F : F\mbox{ is a nonempty finite set of } P\}$. For a nonempty subset $B$ of $P$, define $\mathrm{max}(B)=\{b\in B : b\mbox{~ is a maximal element of~} B\}$ and $\mathrm{min}(B)=\{b\in B : b \mbox{~ is a minimal element of~} B\}$. The set of all natural numbers is denoted by $\mathbb N$. Let $\mathbb N^+=\mathbb N\setminus \{0\}$. For a set $X$, let $|X|$ be the cardinality of $X$ and let $\omega=|\mathbb{N}|$. We use $X^{(<\omega)}$ (resp., $2^X$) to denote the set of all nonempty finite subsets (resp., all subsets) of $X$.

 For a poset $Q$ and $A\subseteq Q$, if the subset of upper bounds of $A$ has a unique smallest element, we call this element the \emph{least upper bound} of $A$ or the \emph{supremum} of $A$ and write it as $\vee A$ or sup\! $A$. Dually, the \emph{greatest lower bound} of $A$ is written as $\wedge A$ or inf\! $A$. The poset $Q$ is called a \emph{sup semilattice} if for any two elements $a, b\in Q$, $a\vee b$ exists in $Q$. A nonempty subset $D$ of a poset $Q$ is called \emph{directed} if every finite subset of $D$ has an upper bound in $D$. Dually, a nonempty subset $F$ of $Q$ is said to be \emph{filtered} if every finite subset of $F$ has a lower bound in $F$. The set of all directed subsets of $Q$ is denoted by $\mathcal{D}(Q)$. The poset $Q$ is called a \emph{directed complete poset}, or \emph{dcpo} for short, provided that the least upper bound $\vee D$ of $D$ exists in $Q$ for any $D\in\mathcal D(Q)$. The poset $Q$ is said to be \emph{sup-complete} if every nonempty subset $A$ has a supremum $\vee A$ in $Q$. Clearly, a poset $Q$ is sup-complete iff $Q$ is both a sup semilattice and a dcpo.

For a topological space $X$, we use $\mathcal O(X)$ (resp., $\Gamma(X)$) to denote the set of all open subsets (resp., all closed subsets) of $X$. The closure of a subset $A$ in $X$ will be denoted by $\cl_X A$ (or simply by $\cl~\! A$ if there is no ambiguity) or $\overline{A}$, and the interior of $A$ will be denoted by $\ii_X A$ or simply by $\ii~\! A$. For a subset $M$ of $X$, the \emph{induced topology} or \emph{subspace topology} on $M$ is denoted by $\mathcal O(X)|_M$, that is, $\mathcal O(X)|_M=\{U\cap M : U\in \mathcal O(X)\}$. A topological space $Y$ is said to be a \emph{Noetherian space} if every open subset of $Y$ is compact (see \cite[Definition 9.7.1]{Goubault-2013}).

\begin{definition}\label{def-continuous-domain} (\cite{GHKLMS-2003}) Let $P$ be a dcpo and $x, y\in P$. We say that $x$ is \emph{way below} $y$, in symbols $x\ll y$, if for every $D\in \mathcal D(P)$, $y\leq\mathrm{sup}~\! D$ implies $x\leq d$ for some $d\in D$. Let ${\Downarrow} x = \{u\in P : u\ll x\}$ and  $K(P)=\{k\in P : k\ll k\}$. Points of $K(P)$ are called \emph{compact elements}.
\begin{enumerate}[\rm (1)]
\item $P$ is called a \emph{continuous domain}, if for every $x\in P$, ${\Downarrow} x$ is directed
and $x=\mathrm{sup}~\!{\Downarrow} x$.
\item $P$ is called an \emph{algebraic domain}, if for every $x\in P$, $K(P)\cap {\downarrow} x$ is directed and $x=\mathrm{sup}~\! (K(P)\cap {\downarrow} x)$.
\item A topological space $X$ is called \emph{core-compact} if $\mathcal O(X)$ is a continuous lattice.
\end{enumerate}
\end{definition}

The following result is well-known (see, e.g.,  \cite[Proposition I-4.3]{GHKLMS-2003}).

\begin{proposition}\label{prop-algebraic-is-continuous} Every algebraic domain is a continuous domain.
\end{proposition}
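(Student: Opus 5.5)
Fix an algebraic domain $P$ and an element $x\in P$. By the definition, $P$ is continuous once we show two things: ${\Downarrow} x$ is directed, and $x=\mathrm{sup}~\!{\Downarrow} x$. Write $K_x=K(P)\cap{\downarrow} x$. The plan is to compare ${\Downarrow} x$ with $K_x$, which is directed and has supremum $x$ by algebraicity.

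The first step is to prove the inclusion $K_x\subseteq{\Downarrow} x$. Take $k\in K_x$, so $k\ll k$ and $k\le x$. Let $D$ be directed with $x\le \mathrm{sup}~\! D$. Then $k\le\mathrm{sup}~\! D$, and $k\ll k$ yields some $d\in D$ with $k\le d$. Hence $k\ll x$. Along the way I will record the routine facts that $u\ll x$ implies $u\le x$ (apply the definition to $D=\{x\}$), and that $u\le k\ll x$ implies $u\ll x$.

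The second step, which I expect to be the only real point, is to show that every $u\in{\Downarrow} x$ lies below some element of $K_x$. The set $K_x$ is directed and $x=\mathrm{sup}~\! K_x$, so in particular $x\le\mathrm{sup}~\! K_x$. Since $u\ll x$, the definition of $\ll$ gives some $k\in K_x$ with $u\le k$. Thus $K_x$ is cofinal in ${\Downarrow} x$.

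The conclusion then follows. For directedness, ${\Downarrow} x\supseteq K_x\neq\emptyset$, so it is nonempty. Given $u_1,u_2\in{\Downarrow} x$, choose $k_1,k_2\in K_x$ with $u_i\le k_i$, and let $k\in K_x$ be an upper bound of $k_1,k_2$. Then $k\in{\Downarrow} x$ by the first step, and it bounds both $u_i$. For the supremum, every element of ${\Downarrow} x$ is $\le x$, so $x$ is an upper bound of ${\Downarrow} x$. Any upper bound of ${\Downarrow} x$ also bounds $K_x\subseteq{\Downarrow} x$, hence is $\ge \mathrm{sup}~\! K_x=x$. Therefore $x=\mathrm{sup}~\!{\Downarrow} x$, and $P$ is a continuous domain.
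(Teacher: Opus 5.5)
Your proof is correct and complete. The inclusions $K(P)\cap{\downarrow}x\subseteq{\Downarrow}x$ and ${\Downarrow}x\subseteq{\downarrow}(K(P)\cap{\downarrow}x)$ are exactly what is needed to transfer directedness and the supremum from $K(P)\cap{\downarrow}x$ to ${\Downarrow}x$; the paper gives no proof of its own and simply cites the result as well known (Proposition I-4.3 in Gierz et al., \emph{Continuous Lattices and Domains}), where the standard argument is essentially yours.
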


\begin{remark}\label{rem-Hofmann-Lawson} It is easy to verify that every locally compact space is core-compact (see, e.g., \cite[Examples I-1.7]{GHKLMS-2003}). In \cite{Hofmann-Lawson-1978} (see also \cite[Exercise V-5.25]{GHKLMS-2003}) Hofmann and Lawson gave a second-countable core-compact $T_0$-space $X$ in which every compact subset of $X$ has empty interior, and hence it is not locally compact.
\end{remark}

We use $\mathbf{Top}_0$ to denote the category of $T_0$-spaces and continuous mappings. For a $T_0$-space $X$, let $\leq_X$ denote the \emph{specialization order} of $X$: $x\leq_X y$ if{}f $x\in \overline{\{y\}}$. In what follows,, when a $T_0$-space $X$ is considered as a poset, the order always refers to the specialization order if no other explanation is given.

As in \cite{GHKLMS-2003}, the \emph{upper topology} on a poset $P$, generated by the family $\{P\setminus {\da}x : x\in P\}$ (as a subbase), is denoted by $\upsilon(P)$. A subset $U$ of a poset $P$ is \emph{Scott open} if (i) $U={\uparrow}U$, and (ii) for any directed subset $D$ for which $\vee D$ exists, $\vee D\in U$ implies $D\cap U\neq\emptyset$. All Scott open subsets of $P$ form a topology, and we call this topology the \emph{Scott topology} on $P$ and denote it by $\sigma(P)$. The space $\Sigma P=(P, \sigma(P))$ is called the \emph{Scott space} of $P$. The upper sets of $P$ form the (\emph{upper}) \emph{Alexandroff topology} $\alpha (P)$.

A subset $A$ of a $T_0$-space $X$ is called \emph{saturated} if $A$ equals the intersection of all open sets containing it (equivalently, $A$ is an upper set in the specialization order). We use $Q(X)$ to denote the set of all nonempty compact saturated subsets of $X$ and endow it with the Smyth order $\sqsubseteq:~K_1\sqsubseteq K_2$ iff $K_{2}\subseteq K_{1}$. The space $X$ is said to be \emph{coherent} if the intersection of any two compact saturated sets is compact. For a $T_0$-topology $\tau$ on a set $Y$, we define the (\emph{de Groot}) \emph{dual} $\tau^d$ of the original topology $\tau$ by taking as a basis for the closed sets all compact saturated sets of $(Y, \tau)$ and  let $\tau^{dd}=(\tau^d)^d$ (see \cite{Groot-1966, Korpperman-1995, Kovar-2003}).

The following result is well-known and can be easily verified (cf. \cite[Section 3.2]{Heckmann-Keimel-2013}).

\begin{lemma}\label{lem-compact-saturated-in-Alexanderoff-topologyin} For a poset $P$, $Q((P, \alpha (P)))=\mathbf{Fin}~\!P$.
\end{lemma}

For a topological space $X$, $\mathcal{G}\subseteq 2^{X}$ and $A\subseteq X$, let $\Diamond_{\mathcal{G}}A=\{G\in \mathcal{G}: G\cap A\neq\emptyset\}$ and $\Box_{\mathcal{G}}A=\{G\in \mathcal{G}: G\subseteq A\}$. The sets $\Diamond_{\mathcal{G}}A$ and $\Box_{\mathcal{G}}A$ will be simply written as $\Diamond A$ and $\Box A$ respectively if there is no confusion. The \emph{upper Vietoris topology} on $\mathcal{G}$ is the topology that has $\{\Box_{\mathcal{G}} U : U\in \mathcal O(X)\}$ as a base, and the resulting space is denoted by $P_S(\mathcal{G})$. The space $P_S(\mathord{Q}(X))$, denoted shortly by $P_S(X)$, is called the \emph{Smyth power space} or \emph{upper power space} of $X$ (cf. \cite{Heckmann-1992, Heckmann-Keimel-2013, Schalk-1993}). The \emph{lower Vietoris topology} on $\mathcal{G}$ is the topology that has $\{\Diamond U: U\in \mathcal{O}(X)\}$ as a subbase, and the resulting space is denoted by $P_{H}(\mathcal{G})$. If $\mathcal{G}\subseteq \ir (X)$, then $\{\Diamond_{\mathcal{G}}U: U\in \mathcal{O}(X)\}$ is a topology on~$\mathcal{G}$. The space $P_H(\ir_{c}(X))$ is called the \emph{Hoare power space} or \emph{lower  power  space} of $X$ and is denoted by $P_H(X)$ for short (cf. \cite{Schalk-1993}), and $X^s=P_H(\ir_c(X))$ with the canonical mapping $\eta_{X}: X\longrightarrow X^s$ is called the \emph{canonical sobrification} of $X$ (see \cite{GHKLMS-2003, Goubault-2013}).

For two topological spaces $X$ and $Y$, $Y$ is said to be a \emph{retract} of $X$ if there are two continuous mappings $f : X\rightarrow Y$ and $g : Y\rightarrow X$ such that $f\circ g=id_{Y}$.

We have the following folklore result which can be easily verified.

\begin{lemma}\label{prod-retract}
	Let	$X=\prod_{i\in I}X_i$ be the product space of $T_0$-spaces  $X_i$ $(i\in I)$. Then for each $i\in I$, $X_i$ is a retract of $X$.
\end{lemma}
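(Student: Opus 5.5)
Fix $i\in I$; we will exhibit continuous maps $f: X\rightarrow X_i$ and $g: X_i\rightarrow X$ with $f\circ g=id_{X_i}$. For $f$ we take the canonical projection $p_i: X\rightarrow X_i$, which is continuous by the definition of the product topology. For $g$ we first need a point of $X$, so we assume $X\neq\emptyset$ and hence every $X_j$ is nonempty. (If some $X_j$ is empty, $X$ is empty and the statement fails trivially for a nonempty $X_i$; so this nonemptiness is a tacit hypothesis of the lemma.) Using the axiom of choice, we pick a point $a=(a_j)_{j\in I}\in X$.

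We then define $g: X_i\rightarrow X$ by $g(x)=(y_j)_{j\in I}$, where $y_i=x$ and $y_j=a_j$ for $j\neq i$. To prove $g$ continuous, it suffices to check that each composite $p_j\circ g$ is continuous, by the universal property of the product. For $j=i$ this composite is $id_{X_i}$, and for $j\neq i$ it is the constant map with value $a_j$; both are continuous. Equivalently, one can check directly that $g^{-1}(p_j^{-1}(U))$ equals $U$ when $j=i$ and equals $\emptyset$ or $X_i$ when $j\neq i$.

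Finally, $p_i\circ g(x)=x$ for every $x\in X_i$, so $f\circ g=id_{X_i}$. This shows that $X_i$ is a retract of $X$. Note that the $T_0$ hypothesis plays no role in the argument.

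There is no real obstacle in this proof. The only point needing care is the nonemptiness issue that makes the choice of base point $a$ possible.
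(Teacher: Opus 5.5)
Your proof is correct, and it is the standard verification the paper leaves implicit: the paper states this lemma as folklore without proof, and the intended argument is exactly yours, namely the projection $p_i$ together with the continuous section that fixes the other coordinates at a chosen base point. You are also right that the lemma tacitly requires $X\neq\emptyset$ (equivalently, every $X_j$ nonempty), and that the $T_0$ hypothesis is not used.
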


A topological property $S$ is said to be \emph{retractive} if for any space $X$ that has the property $S$, every retract of $X$ also has the property $S$.

 \begin{definition}(\cite[Definition 1.19]{Xu-2026})
Let $\mathbf{K}$ be a full subcategory $\mathbf{Top}_0$ and $X$ be a $T_{0}$ space. A $\mathbf{K}$-\emph{reflection} of $X$ is a pair $\langle \widetilde{X},\eta_X \rangle$ consisting of a $\mathbf{K}$-space (i.e., an object of $\mathbf{K}$) $\widetilde{X}$ and a continuous mapping $\eta_X :X\rightarrow \widetilde{X}$ satisfying that for any continuous mapping $f:X\rightarrow Y$ to a $\mathbf{K}$-space, there exists a unique continuous mapping $f^{*}:\widetilde{X}\rightarrow Y$ such that $f^{*}\circ \eta_X =f$, that is, the following diagram commutes.

\begin{equation*}
\centerline{
\xymatrix{ X \ar[dr]_{f} \ar[r]^-{\eta_X}&  \widetilde{X}\ar@{.>}[d]^{f^{*}} & \\
  & Y  & &
   }}
\end{equation*}
The category $\mathbf{K}$ is called \emph{reflective} in $\mathbf{Top}_0$ if every $T_0$-space has a $\mathbf{K}$-reflection.
\end{definition}



A full subcategory $\mathbf{K}$ of $\mathbf{Top}_0$ is called \emph{productive} (resp., \emph{finite} \emph{productive}) if the product space $\prod_{i\in I}X_i$ in $\mathbf{Top}_0$ is a $\mathbf{K}$-space whenever all spaces $X_i$ (resp., finitely many spaces $X_i$) are $\mathbf{K}$-spaces (i.e., objects of $\mathbf{K}$). A topological property $S$ is said to be \emph{productive} (resp., \emph{finite} \emph{productive}) if the product space $\prod_{i\in I}X_i$ in $\mathbf{Top}_0$ has the property $S$ whenever all $T_0$-spaces $X_i$ (resp., finitely many $T_0$-spaces $X_i$) have the property $S$. Let $\mathbf{Top}_S$ be the category of all $T_0$-spaces having the property $S$ and continuous mappings. Then the property $S$ is productive (resp., finite productive) iff  $\mathbf{Top}_S$ is productive (resp., finite productive).

By a standard categorical argument, we get the following result (see \cite[Remark 1.1]{Nel-Wilson-1972} or \cite[page 92, Exercise 7]{MacLane-1997} or \cite[Remark 4.8]{Keimel-Lawson-2009}).

\begin{proposition}\label{prop-category-reflective-productive} For a full subcategory $\mathbf{K}$ of $\mathbf{Top}_0$, if $\mathbf{K}$ is reflective in $\mathbf{Top}_0$, then $\mathbf{K}$ is productive.
\end{proposition}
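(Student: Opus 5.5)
The plan is to use the universal property of reflections directly. Fix a family $\{X_i\}_{i\in I}$ of $\mathbf{K}$-spaces and let $X=\prod_{i\in I}X_i$ be their product in $\mathbf{Top}_0$, with projections $p_i : X\rightarrow X_i$. Let $\langle \widetilde{X},\eta_X\rangle$ be the $\mathbf{K}$-reflection of $X$, which exists by hypothesis. Since each $X_i$ is a $\mathbf{K}$-space, the universal property gives a unique continuous $p_i^{*} : \widetilde{X}\rightarrow X_i$ with $p_i^{*}\circ\eta_X=p_i$. The universal property of the product in $\mathbf{Top}_0$ then yields a continuous map $r : \widetilde{X}\rightarrow X$ with $p_i\circ r=p_i^{*}$ for all $i$.

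The next step is to show that $r\circ\eta_X=\mathrm{id}_X$. For every $i$ we have $p_i\circ r\circ\eta_X=p_i^{*}\circ\eta_X=p_i$, so uniqueness in the product's universal property gives $r\circ\eta_X=\mathrm{id}_X$.

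After that, we show that $\eta_X\circ r=\mathrm{id}_{\widetilde{X}}$. Both $\eta_X\circ r$ and $\mathrm{id}_{\widetilde{X}}$ are continuous maps $\widetilde{X}\rightarrow\widetilde{X}$ into a $\mathbf{K}$-space. Precomposed with $\eta_X$ they agree, since $\eta_X\circ r\circ\eta_X=\eta_X=\mathrm{id}_{\widetilde{X}}\circ\eta_X$. Uniqueness of the extension of $\eta_X$ along $\eta_X$ then forces $\eta_X\circ r=\mathrm{id}_{\widetilde{X}}$.

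Hence $\eta_X$ is a homeomorphism with inverse $r$. Provided $\mathbf{K}$ is understood as replete (closed under homeomorphic copies, which is the standard convention for categories defined by topological properties), $X$ is a $\mathbf{K}$-space, so $\mathbf{K}$ is productive. The only delicate point is this repleteness convention; everything else is a routine diagram chase with the two uniqueness clauses, which is where the argument actually does its work. The empty product, the one-point space, is covered by the same argument.
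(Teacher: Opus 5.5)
Your argument is correct and is exactly the standard categorical argument the paper invokes. The paper gives no proof of its own and only cites Nel--Wilson, Mac Lane and Keimel--Lawson; as you show, the two uniqueness clauses make $\eta_X$ a homeomorphism with inverse $r$. Your repleteness caveat is the right one, and it is harmless here because every category the paper applies this to is defined by a homeomorphism-invariant property.
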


\section{Sober spaces, $d$-spaces and well-filtered spaces}

  A $T_0$-space $X$ is called a \emph{$d$-space} (or \emph{monotone convergence space}) if $X$ (with the specialization order) is a dcpo and $\mathcal O(X) \subseteq \sigma(X)$ (cf. \cite{GHKLMS-2003, Wyler-1981}). The category of $d$-spaces and continuous mappings is denoted by $\mathbf{Top}_d$.

A nonempty subset $A$ of a $T_0$-space $X$ is said to be \emph{irreducible} if for any $F_1$, $F_2\in\Gamma(X)$, $A\subseteq F_1\cup F_2$ implies $A\subseteq F_1$ or $A\subseteq F_2$. We denote by $\ir(X)$ (resp., $\ir_c(X)$) the set of all irreducible subsets (resp., all irreducible closed subsets) of $X$. Clearly, every directed subset of $X$ (with the specialization order) is irreducible and the nonempty irreducible subsets of a poset equipped with the Alexandroff topology are exactly the directed sets of $P$ (cf. \cite[Fact 2.6]{Heckmann-Keimel-2013} or \cite[Lemma 1.2]{Hoffmann-1979}). The space $X$ is called \emph{sober}, if for any $A\in\ir_c(X)$, there is a unique point $x\in X$ such that $A=\overline{\{x\}}$. We denote by $\mathbf{Sob}$ the category of sober spaces and continuous mappings.

The following conclusion is well-known (see, e.g., \cite[Corollary II-1.12]{GHKLMS-2003}).

\begin{proposition}\label{prop-continuous-domain-Scott-is-sober} For a continuous domain $P$, $\Sigma ~\!\! P$ is sober.
\end{proposition}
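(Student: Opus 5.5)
We need to prove that for a continuous domain $P$, the Scott space $\Sigma P$ is sober. The plan is to take an irreducible Scott-closed set $A\subseteq P$ and show that $A={\da} x$ for a unique $x$. Uniqueness is immediate: closures of points in $\Sigma P$ are principal ideals ${\da} x$, and antisymmetry of the order gives $T_0$. So the real work is existence.

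First we use continuity to obtain a basis of Scott topology. For each $u\in P$ set ${\Uparrow} u=\{y\in P: u\ll y\}$. We show ${\Uparrow} u$ is Scott open by the interpolation property of continuous domains: if $u\ll y$ then there is $v$ with $u\ll v\ll y$. Given interpolation, if $\vee D\in{\Uparrow}u$, pick $v$ with $u\ll v\ll\vee D$; then $v\leq d$ for some $d\in D$, so $d\in{\Uparrow}u$. Interpolation itself is derived in the standard way: the set $\{w : w\ll v\ll y \text{ for some } v\}$ is directed with supremum $y$, using that each ${\Downarrow} v$ is directed with sup $v$. Moreover, every Scott open $U\ni y$ contains some $u\ll y$ with $y\in{\Uparrow}u\subseteq U$, since ${\Downarrow} y$ is directed with sup $y$. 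Hence $\{{\Uparrow}u : u\in P\}$ is a base.

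Next we show that $D=\{u\in P : u\ll a \text{ for some } a\in A\}$ is directed. It is nonempty because $A\neq\emptyset$ and each ${\Downarrow} a$ is nonempty. Given $u_1,u_2\in D$, the basic opens ${\Uparrow}u_1$ and ${\Uparrow}u_2$ both meet $A$. By irreducibility, $A\cap{\Uparrow}u_1\cap{\Uparrow}u_2\neq\emptyset$, since otherwise $A$ would be covered by the two closed complements. So there is $a\in A$ with $u_1,u_2\ll a$. Since ${\Downarrow}a$ is directed, we obtain $w\ll a$ with $u_1,u_2\leq w$, and $w\in D$. Thus $D$ is directed.

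Let $x=\vee D$. Every $u\in D$ lies below some $a\in A$, and $A$ is a lower set, so $D\subseteq A$. Since $A$ is Scott closed, we get $x\in A$, hence ${\da}x\subseteq A$. Conversely, each $a\in A$ equals $\vee{\Downarrow}a$ with ${\Downarrow}a\subseteq D$, so $a\leq x$. Therefore $A={\da}x=\overline{\{x\}}$.

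The main obstacle is the interpolation property, which is needed to know that the sets ${\Uparrow}u$ are Scott open. Everything else follows directly from the definitions.
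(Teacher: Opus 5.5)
Your proof is correct and complete. The paper gives no argument of its own for this statement; it quotes it as well known from \cite[Corollary II-1.12]{GHKLMS-2003}. What you have written is the standard self-contained proof that the citation stands for:
\begin{itemize}
\item interpolation makes each ${\Uparrow}u$ Scott open, and $\{{\Uparrow}u : u\in P\}$ is a base;
\item irreducibility of $A$ then forces $\bigcup_{a\in A}{\Downarrow}a$ to be directed;
\item its supremum $x$ satisfies $A={\da}x$.
\end{itemize}

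You leave two details implicit, both routine. First, ${\Uparrow}u$ is an upper set, because $u\ll y\leq z$ implies $u\ll z$. Second, interpolation finishes as follows: from $u\ll y=\vee E$ you get $u\leq w\ll v\ll y$ for some $w\in E$, and hence $u\ll v\ll y$.
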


For the sobriety of upper topology on a poset, we have the following result.

\begin{proposition}\label{prop-upper-topology-WF} (\cite[Proposition 2.9]{Xu-Shen-Xi-Zhao-2020-2}) For a poset $P$, the space $(P, \upsilon (P))$ is sober iff $\vee A$ exists in $P$ for any $A\in\ir ((P, \upsilon (P))$. Therefore, for any sup-complete poset (especially, any  complete lattice) $L$, $(L, \upsilon (L))$ is sober.
\end{proposition}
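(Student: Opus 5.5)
We must prove: $(P,\upsilon(P))$ is sober iff $\vee A$ exists for every $A\in\ir((P,\upsilon(P)))$, and hence $(L,\upsilon(L))$ is sober for any sup-complete $L$.

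The plan rests on two basic facts about the upper topology. First, the specialization order of $(P,\upsilon(P))$ is the original order. Indeed, ${\da}x$ is closed, and every closed set is a lower set. So $\overline{\{x\}}={\da}x$.

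Second, every closed set is an intersection of finite unions of principal ideals ${\da}x$. Consequently, for any $A\subseteq P$,
\[
\cl A=\bigcap\{{\da}F : F\subseteq P \mbox{ finite},\ A\subseteq {\da}F\}.
\]
For irreducible $A$, each condition $A\subseteq {\da}F$ with $F$ finite reduces, by irreducibility, to $A\subseteq{\da}x$ for some $x\in F$. Hence
\[
\cl A=\bigcap\{{\da}x : x\in A^{u}\},
\]
where $A^{u}$ denotes the set of upper bounds of $A$. When $A^u=\emptyset$ this intersection is empty, so $\cl A=P$.

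For the direction ($\Leftarrow$), take $A\in\ir_c((P,\upsilon(P)))$. Since $A$ is irreducible, $a=\vee A$ exists by hypothesis. Then $A^u={\ua}a$, so the formula gives $\cl A=\bigcap_{x\geq a}{\da}x={\da}a=\overline{\{a\}}$. Because $A$ is closed, $A={\da}a$. Uniqueness of $a$ follows from $T_0$, since the specialization order is the original order. Hence the space is sober.

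For the direction ($\Rightarrow$), take $A\in\ir((P,\upsilon(P)))$. Then $\cl A$ is irreducible and closed, so by sobriety $\cl A={\da}a$ for a unique $a$. We check that $a=\vee A$.
\begin{itemize}
\item $A\subseteq{\da}a$, so $a$ is an upper bound of $A$.
\item If $b$ is any upper bound, then $A\subseteq{\da}b$, which is closed. So ${\da}a=\cl A\subseteq{\da}b$, giving $a\leq b$.
\end{itemize}
Thus $a$ is the least upper bound, and this direction needs no closure formula.

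For the "therefore" part, let $L$ be sup-complete. Every irreducible set is nonempty, so it has a supremum in $L$, and the first part applies. A complete lattice is in particular sup-complete.

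The main obstacle is the closure formula in the ($\Leftarrow$) direction. One must confirm that basic closed sets of $\upsilon(P)$ are exactly the finite unions ${\da}F$ (including the empty union $\emptyset$). One must also handle the edge case $A^u=\emptyset$: there the hypothesis guarantees that $\vee A$ exists, so this case does not arise for irreducible $A$. Irreducibility is used precisely to pass from $A\subseteq{\da}F$ to $A\subseteq{\da}x$ for a single $x\in F$, by induction on $|F|$.
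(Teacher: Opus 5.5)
Your proof is correct. The paper gives no proof of its own and only cites \cite[Proposition 2.9]{Xu-Shen-Xi-Zhao-2020-2}; your argument follows the standard route for this result. In one direction, irreducibility of $A$ against a basic closed set ${\da}F$ forces $A\subseteq{\da}x$ for a single $x\in F$, which gives $A={\da}(\vee A)$ for $A\in\ir_c((P,\upsilon(P)))$; in the other, the generic point of $\cl A$ is identified with $\vee A$ (you phrase the first step via a closure formula rather than a basic open neighbourhood of $\vee A$ missing $A$, which is only a cosmetic difference).
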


For a $T_0$-space $X$, it is easy to verify that $P_H(X)=(\Gamma(X) \setminus\{\emptyset\}, \upsilon(\Gamma(X)\setminus\{\emptyset\}))$. So by Proposition \ref{prop-upper-topology-WF} we get the following.

\begin{proposition}\label{prop-Hoare-is-sober}  For a $T_0$-space $X$, its Hoare power space $P_H(X)$ is sober.
\end{proposition}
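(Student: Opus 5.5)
The plan is to reduce the statement to Proposition \ref{prop-upper-topology-WF}, as the sentence preceding the statement suggests. It has two steps: (i) identify the topology of $P_H(X)$ with the upper topology on the poset $(\Gamma(X)\setminus\{\emptyset\},\subseteq)$; (ii) show that this poset is sup-complete. Sobriety then follows at once from the second part of Proposition \ref{prop-upper-topology-WF}.

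For (i), I first check that the order is correct. I take $\ir_c(X)$ to mean the nonempty irreducible closed sets, and $P_H(X)$ as in the paper's identification. The Hoare topology has subbase $\{\Diamond U : U\in\mathcal O(X)\}$, so I compute complements. For $U\neq X$, set $C=X\setminus U$, a nonempty closed set. Then the complement of $\Diamond U$ is $\{A : A\subseteq C\}={\da}C$ in the inclusion order. For $U=X$ we have $\Diamond X$ equal to the whole space, since members are nonempty. So the subbasic closed sets of the lower Vietoris topology are exactly the principal down-sets ${\da}C$ with $C\in\Gamma(X)\setminus\{\emptyset\}$, together with the empty set. These are precisely the subbasic closed sets $\{{\da}x\}$ of the upper topology $\upsilon$ on this poset, with the empty set adjoined, and adjoining it does not change the generated topology. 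Hence the two topologies coincide. A side consequence is that the specialization order of $P_H(X)$ is inclusion, which is consistent.

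For (ii), let $\mathcal A$ be a nonempty family of nonempty closed sets. Then $\cl\bigcup\mathcal A$ is a nonempty closed set. It contains every member of $\mathcal A$, and it is contained in every closed set containing them all. So it is the supremum of $\mathcal A$, and the poset is sup-complete. Proposition \ref{prop-upper-topology-WF} now applies and gives sobriety.

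The only real care needed is the bookkeeping in (i). I must handle the degenerate cases $U=X$ and $U=\emptyset$, and make sure the carrier set of $P_H(X)$ is the one for which the identification with $\Gamma(X)\setminus\{\emptyset\}$ holds. If the carrier is instead read as only the irreducible closed sets, I would not use this reduction. I would instead check sobriety directly: for an irreducible closed $\mathcal F\subseteq P_H(X)$, show that $\cl\bigcup\mathcal F$ is irreducible and is its unique generic point. I expect this to be the main (though mild) obstacle.
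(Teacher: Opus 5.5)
Your proposal is correct and follows the paper's own route: identify $P_H(X)$ with $(\Gamma(X)\setminus\{\emptyset\},\upsilon(\Gamma(X)\setminus\{\emptyset\}))$, note that this poset is sup-complete via closures of unions, and apply Proposition \ref{prop-upper-topology-WF}; you simply supply the details the paper leaves as ``easy to verify.'' Your caveat about the carrier is well taken, since the paper's definition literally writes $P_H(\ir_c(X))$ while its one-line argument requires all nonempty closed sets, and your fallback for $\ir_c(X)$ is the standard proof that the sobrification $X^s$ is sober.
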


For the sobriety of the Smyth power spaces, we have the following well-known result.

\begin{theorem}\label{theor-Schalk-Heckman-Keimel-theorem} (Heckmann-Keimel-Schalk Theorem) (\cite[Theorem 3.13]{Heckmann-Keimel-2013} and \cite[Lemma 7.20]{Schalk-1993}) For a $T_0$-space $X$, the following conditions are equivalent:
\begin{enumerate}[\rm (1)]
\item $X$ is sober.
 \item  For any $\mathcal A\in \ir(P_S(X))$ and $U\in \mathcal O(X)$, $\bigcap\mathcal A\subseteq U$ implies $K \subseteq U$ for some $K\in \mathcal A$.
 \item $P_S(X)$ is sober.
\end{enumerate}
\end{theorem}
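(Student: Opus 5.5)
The proof goes around the cycle (1)$\Rightarrow$(2)$\Rightarrow$(3)$\Rightarrow$(1). Throughout I use two facts about $P_S(X)$. First, its specialization order is the Smyth order, so the closure of $\{K\}$ is $\{K'\in Q(X): K\subseteq K'\}$. Second, $\xi_X: X\to P_S(X)$, $x\mapsto{\ua}x$, is continuous, because $\xi_X^{-1}(\Box U)=U$.

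\emph{(1)$\Rightarrow$(2).} The main tool is a topological Rudin lemma, which I prove first. Suppose $\mathcal A\in\ir(P_S(X))$ and $C\in\Gamma(X)$ meets every $K\in\mathcal A$. Then $C$ contains a minimal closed subset $C'$ that still meets every member of $\mathcal A$, and this $C'$ is irreducible.
\begin{itemize}
\item \emph{Existence of $C'$.} I apply Zorn's lemma to the family of closed subsets of $C$ meeting every $K\in\mathcal A$. For a chain in this family, the intersection still meets each $K$: the traces on $K$ form a filtered family of closed sets in the compact set $K$.
\item \emph{Irreducibility of $C'$.} Suppose $C'\subseteq C_1\cup C_2$ with $C_i\in\Gamma(X)$ and $C'\not\subseteq C_i$ for $i=1,2$. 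By minimality there are $K_i\in\mathcal A$ with $K_i\cap C'\cap C_i=\emptyset$, i.e. $K_i\subseteq X\setminus(C'\cap C_i)$. So $\mathcal A\cap\Box(X\setminus(C'\cap C_1))$ and $\mathcal A\cap\Box(X\setminus(C'\cap C_2))$ are nonempty relatively open subsets of $\mathcal A$. Irreducibility gives some $K_3\in\mathcal A$ lying in both. Then $K_3\cap C'=\emptyset$, a contradiction.
\end{itemize}
Now suppose $\bigcap\mathcal A\subseteq U$ but no $K\in\mathcal A$ lies in $U$. Take $C=X\setminus U$ and apply the lemma to get $C'$. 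By sobriety, $C'=\overline{\{x\}}={\da}x$ for some $x$. Each $K$ is an upper set meeting ${\da}x$, so $x\in K$ for every $K\in\mathcal A$. Hence $x\in\bigcap\mathcal A\subseteq U$, while also $x\in C'\subseteq C=X\setminus U$, a contradiction. I expect the Rudin-type lemma to be the main obstacle; the rest is bookkeeping.

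\emph{(2)$\Rightarrow$(3).} Let $\mathcal A$ be an irreducible closed subset of $P_S(X)$ and set $K_0=\bigcap\mathcal A$.
\begin{itemize}
\item $K_0$ is saturated, being an intersection of saturated sets.
\item $K_0$ is nonempty: otherwise (2) applied to $U=\emptyset$ gives some $K\in\mathcal A$ with $K=\emptyset$, which is impossible.
\item $K_0$ is compact: if $\{U_i\}$ is an open cover of $K_0$, then (2) gives some $K\in\mathcal A$ with $K\subseteq\bigcup_i U_i$, and compactness of $K\supseteq K_0$ yields a finite subcover.
\item $K_0\in\mathcal A$: otherwise some basic open $\Box U\ni K_0$ misses $\mathcal A$, but $K_0\subseteq U$ and (2) give some $K\in\mathcal A\cap\Box U$.
\end{itemize}
Every $K\in\mathcal A$ contains $K_0$, and $\mathcal A$ is a lower set in the Smyth order because it is closed. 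Hence $\mathcal A=\{K: K_0\subseteq K\}=\cl\{K_0\}$. Uniqueness of $K_0$ follows from $P_S(X)$ being $T_0$.

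\emph{(3)$\Rightarrow$(1).} Let $C\in\ir_c(X)$ and put $\mathcal A=\Diamond C=\{K\in Q(X): K\cap C\neq\emptyset\}$.
\begin{itemize}
\item $\mathcal A$ is closed, since its complement is $\Box(X\setminus C)$.
\item $\mathcal A$ is the closure of $\xi_X(C)$. Indeed $\xi_X(C)\subseteq\mathcal A$. Conversely, if $c\in K\cap C$, then ${\ua}c\subseteq K$, so $K$ lies in the closure of $\{{\ua}c\}$.
\item Hence $\mathcal A$ is irreducible, as the closure of the continuous image of an irreducible set.
\end{itemize}
By (3), $\mathcal A=\cl\{K_0\}$ for some $K_0\in\mathcal A$. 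For each $c\in C$ we have ${\ua}c\in\mathcal A$, so $K_0\subseteq{\ua}c$. Pick $x\in K_0\cap C$; then $x\ge c$ for all $c\in C$. Therefore $C\subseteq{\da}x=\overline{\{x\}}\subseteq C$, i.e. $C=\overline{\{x\}}$, and $x$ is unique because $X$ is $T_0$.
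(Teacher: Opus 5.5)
Your proof is correct in all three implications. The paper gives no proof of this theorem and only cites Heckmann--Keimel and Schalk; your argument matches the standard proof in those sources: the topological Rudin lemma for (1)$\Rightarrow$(2), $\bigcap\mathcal A$ as the generic point for (2)$\Rightarrow$(3), and $\Diamond C=\cl\,\xi_X(C)$ for (3)$\Rightarrow$(1).
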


 A $T_0$-space $X$ is called \emph{well-filtered} if for any filtered family $\mathcal{K}\subseteq \mathord{Q}(X)$ and $U\in \mathcal O(X)$, $\bigcap\mathcal{K}{\subseteq} U$ implies $K{\subseteq} U$ for some $K{\in}\mathcal{K}$. We use $\mathbf{Top}_w$ to denote the category of well-filtered spaces and continuous mappings.

For the well-filteredness and coherence of Scott spaces, we have the following two useful results.

\begin{proposition}\label{prop-Scott-topology-on-complete-lattice-WF} (\cite[Corollary 3.2]{Xi-Lawson-2017}) For a complete lattice $L$, its Scott space $(L, \sigma (L))$ is well-filtered.
\end{proposition}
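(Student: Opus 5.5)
The statement to prove is that for a complete lattice $L$, the Scott space $\Sigma L=(L,\sigma(L))$ is well-filtered. Fix a filtered family $\mathcal K\subseteq Q(\Sigma L)$ and a Scott open $U$ with $\bigcap\mathcal K\subseteq U$. We argue by contradiction: suppose $K\not\subseteq U$ for every $K\in\mathcal K$. The aim is to produce a point of $\bigcap\mathcal K$ outside $U$. The approach is to build a directed set from the complements and use completeness to take its supremum, exploiting that $L\setminus U$ is Scott closed and hence closed under directed sups.

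First, for each $K\in\mathcal K$, the set $K\setminus U$ is nonempty. The natural approach is Rudin's Lemma in its topological form. The family $\{K\setminus U: K\in\mathcal K\}$ is filtered with respect to inclusion. Moreover, each $\uparrow(K\setminus U)$ meets every member in the sense needed. Rudin's lemma yields a directed subset $D\subseteq L\setminus U$ such that $D\cap K\neq\emptyset$ for all $K\in\mathcal K$. The lemma, for a filtered family of nonempty finitely generated upper sets, extends to compact saturated sets in the form of the Heckmann--Keimel topological Rudin lemma: there is an irreducible $A\subseteq\bigcup_K(K\setminus U)$ with $\overline A\cap K\neq\emptyset$ for all $K$. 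In $\Sigma L$, irreducible closed sets need not be directed, since Scott spaces of complete lattices need not be sober (Isbell). Instead, I would take the minimal closed set $C\subseteq L\setminus U$ meeting every $K$, which exists by Zorn's lemma and the compactness of each $K$.

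Second, exploit the lattice structure. Set $a=\bigvee C$. I would like to show that $a\in K$ for every $K$. Since each $K$ is an upper set meeting $C$, we have $a\ge c$ for some $c\in C\cap K$, so $a\in K$. Hence $a\in\bigcap\mathcal K\subseteq U$. Since $U$ is Scott open, some finite join of elements of $C$, or some element of a directed approximation, lies in $U$. The key is that $\bigvee C=\bigvee\{\bigvee F: F\subseteq C\text{ finite}\}$ is a directed sup, so some $\bigvee F\in U$ with $F\subseteq C$ finite. We then need to derive a contradiction from the minimality of $C$. Using minimality of $C$ with respect to meeting all $K$, we show that $C$ cannot be covered by finitely many proper closed subsets each failing to meet some $K$. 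Combined with filteredness, this forces a single point to dominate appropriately, which would put a point of $C$ into $U$.

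The main obstacle is precisely this last step: passing from $\bigvee F\in U$ to an element of $C$ in $U$. Joins of elements of $C$ need not lie in $C$, since $C$ is only closed downward and under directed sups. I would try to replace $C$ by the Scott closure of its finite joins, intersected with something, or instead run the argument on $\bigcap\mathcal K$ directly. Each $K$ is compact and $L\setminus U$ is closed, so the sets $K\setminus U$ are compact in the patch-like sense. Failing that, I would fall back on citing the known result of Xi and Lawson, which is exactly the cited Corollary 3.2.
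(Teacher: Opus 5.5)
\textbf{Verdict: there is a genuine gap.} The paper gives no argument here; it imports the statement from Xi and Lawson (Corollary 3.2 of their 2017 paper). Your fallback of citing that result is therefore exactly what the paper does. Your direct argument, however, stops at the step you yourself flag.

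\textbf{What is sound and what fails.} The setup is correct: Zorn plus compactness of each $K$ gives a minimal Scott closed $C\subseteq L\setminus U$ meeting every $K\in\mathcal K$, and $\bigvee C\in\bigcap\mathcal K\subseteq U$. The tool you propose for the contradiction is that $C$ cannot be covered by finitely many proper closed subsets. That is just irreducibility of $C$, and irreducibility cannot do the job. In Isbell's non-sober complete lattice there is an irreducible Scott closed set $A$ not of the form ${\downarrow}x$. Hence $\bigvee A\notin A$, and $L\setminus A$ is a Scott open set containing $\bigvee A$ and missing $A$. You should also drop the sentence claiming that Rudin's lemma gives a directed $D\subseteq L\setminus U$ meeting every $K$. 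Rudin's lemma covers only finitely generated upper sets, and such a $D$ would end the proof at once, since $\bigvee D\in\bigcap\mathcal K\setminus U$.

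\textbf{The missing idea.} Use a stronger consequence of minimality, together with compactness a second time:
(P) if $V$ is Scott open and meets $C$, then $C\cap K\subseteq V$ for some $K\in\mathcal K$.
This holds because $C\setminus V$ is a proper closed subset of $C$. With (P) one shows that $C$ is closed under binary joins.
\begin{itemize}
\item Suppose $a,b\in C$ but $a\vee b\notin C$, and let $V=L\setminus C$.
\item Since $x\mapsto x\vee b$ is Scott continuous, $\{x : x\vee b\in V\}$ is Scott open and contains $a$. So (P) yields $K_1\in\mathcal K$ with $y\vee b\in V$ for all $y\in C\cap K_1$, that is, $b\in\Phi=\{x\in L : x\vee y\in V \text{ for all } y\in C\cap K_1\}$.
\item $\Phi$ is Scott open. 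It is an upper set. If $\bigvee D\in\Phi$ with $D$ directed, the compact set $C\cap K_1$ is covered by the directed family of Scott open sets $\{y : d\vee y\in V\}$, $d\in D$. Hence it is covered by a single one, so some $d\in\Phi$.
\item Applying (P) to $\Phi\ni b$ gives $K_2$ with $C\cap K_2\subseteq\Phi$.
\item Take $K\in\mathcal K$ with $K\subseteq K_1\cap K_2$ and $y\in C\cap K$. Then $y\in\Phi$ and $y\in C\cap K_1$, so $y=y\vee y\in V$, contradicting $y\in C$.
\end{itemize}
So $C$ is directed, and $\bigvee C\in C$ because $C$ is Scott closed. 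This contradicts $\bigvee C\in U$ and $C\cap U=\emptyset$. The finite set $F$ is not needed at all, and the argument uses only that $L$ is a dcpo with binary joins.
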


\begin{proposition}\label{prop-Scott-WF-coherent} (\cite[Lemma 2.1 and Corollary 3.2]{Jia-Jung-2016}) Let $P$ be a dcpo for which $\Sigma~\!\!P$ is well-filtered. Then $\Sigma~\!\!P$ is coherent if and only if ${\ua}x\cap {\ua}y$ is compact in $\Sigma~\!\!P$ for all $x, y\in P$. Therefore, for any complete lattice $L$, the Scott space $\Sigma~\!\!L$ is coherent.
\end{proposition}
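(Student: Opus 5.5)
The statement to prove is Proposition \ref{prop-Scott-WF-coherent}. For a dcpo $P$ with $\Sigma P$ well-filtered, $\Sigma P$ is coherent iff ${\ua}x\cap{\ua}y$ is compact for all $x,y\in P$; moreover, for every complete lattice $L$, $\Sigma L$ is coherent.

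The forward direction is immediate. Each ${\ua}x$ is compact saturated in $\Sigma P$, so coherence makes ${\ua}x\cap{\ua}y$ compact.

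For the converse, take $K_1,K_2\in Q(\Sigma P)$. The set $K_1\cap K_2$ is saturated, and we must show it is compact. The plan is to use the Rudin-type description available in well-filtered spaces. In a well-filtered space $X$, every compact saturated $K$ equals $\bigcap\mathcal F$ for a filtered family $\mathcal F$ of finitely generated upper sets ${\ua}F$ with $K\subseteq {\ua}F$. In a Scott space one can take $\mathcal F$ to consist of sets of the form ${\ua}F$ with $F$ finite, as in Jia--Jung's lemma.

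I expect this to be the main obstacle, and I do not yet know how to do it. Producing such a family requires some approximation structure that a general dcpo lacks. The first thing I would try is to avoid it altogether and argue with open covers directly, using well-filteredness via the following standard consequence. Suppose $\{U_i\}$ is an open cover of $K_1\cap K_2$ with no finite subcover. By Zorn's lemma, obtain a maximal saturated set $K'\subseteq K_1\cap K_2$ for which no finite subcover exists. Minimality considerations together with well-filteredness then reduce the failure to points: there are $x\in K_1$ and $y\in K_2$ such that ${\ua}x\cap{\ua}y$ already has no finite subcover. This contradicts the hypothesis.

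Concretely, I would proceed in two steps. First, for fixed $y\in K_2$, show that $K_1\cap{\ua}y$ is compact. Suppose it is not. Consider the family of compact saturated subsets $C\subseteq K_1$ such that $C\cap{\ua}y$ is not covered by finitely many $U_i$. Any filtered chain of such sets $C$ has intersection $C_0$, which is again compact by well-filteredness. The intersection $C_0$ still fails to be covered: if finitely many $U_i$ covered $C_0\cap{\ua}y$, then $U\cup(X\setminus{\ua}y)$ would be a neighborhood of $C_0$, but ${\ua}y$ is not closed, so this step needs care. I would instead replace $X\setminus{\ua}y$ by the open set $X\setminus{\da}$-closures, or pass through the subspace ${\ua}y$, which is compact and well-filtered. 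A minimal $C_0$ must then be of the form ${\ua}x$, since otherwise it splits. Second, repeat the argument symmetrically in $K_2$, using the first step.

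The complete-lattice case then follows. By Proposition \ref{prop-Scott-topology-on-complete-lattice-WF}, $\Sigma L$ is well-filtered, and ${\ua}x\cap{\ua}y={\ua}(x\vee y)$ is compact. The equivalence just proved therefore gives that $\Sigma L$ is coherent.
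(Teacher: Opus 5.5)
Your forward direction and the deduction for complete lattices are fine, and your two-step skeleton (first $K_1\cap{\ua}y$, then $K_1\cap K_2$) is the right one. The converse, however, has a genuine gap. Your Zorn's-lemma argument never uses that the topology is the Scott topology of a dcpo, and for general well-filtered spaces the converse is false.

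Let $X=[0,1]\cup\{0'\}$ be the unit interval with the endpoint $0$ doubled.
\begin{itemize}
\item $X$ is $T_1$ and sober, hence well-filtered: any two points other than the pair $0,0'$ have disjoint neighbourhoods, and $\{0,0'\}$ is the union of two closed points.
\item Every ${\ua}x\cap{\ua}y$, and indeed every $K\cap{\ua}y$, has at most one point.
\item Yet $[0,1]$ and $(0,1]\cup\{0'\}$ are compact, and their intersection $(0,1]$ is not.
\end{itemize}
So your \emph{reduction to points} cannot be achieved by generic compactness arguments, and your individual steps do fail:
\begin{itemize}
\item The chain-limit step breaks exactly where you noticed.
\item Passing to the subspace ${\ua}y$ is circular, since it presupposes that the sets $C\cap{\ua}y$ are compact there.
\item \emph{Otherwise it splits} is unjustified when $\mathrm{min}(C_0)$ is infinite.
\end{itemize}

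The missing ingredient is an openness lemma specific to the Scott topology. Let $K$ be saturated with ${\ua}x\cap K$ compact for every $x\in P$, and let $U$ be Scott open. Then $B_U=\{x\in P : {\ua}x\cap K\subseteq U\}$ is Scott open. It is an upper set. If $\bigvee D\in B_U$ for a directed $D$, then ${\ua}\bigvee D=\bigcap_{d\in D}{\ua}d$ shows that the filtered family $\{{\ua}d\cap K : d\in D\}$ of compact saturated sets has intersection contained in $U$. Well-filteredness then gives some $d\in B_U$ (trivially so if some ${\ua}d\cap K$ is empty).

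Now take a compact saturated $C$ and a directed Scott-open cover $\mathcal U$ of $C\cap K$. Each ${\ua}x\cap K$ with $x\in C$ is compact and contained in $C\cap K$, so it lies in a single member of $\mathcal U$. Thus $\{B_U : U\in\mathcal U\}$ is a directed Scott-open cover of $C$. Compactness of $C$ gives $C\subseteq B_U$ for one $U$, and hence $C\cap K\subseteq U$.

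Apply this with $C=K_1$ and $K={\ua}y$, using the hypothesis, to get every $K_1\cap{\ua}y$ compact. Apply it again with $C=K_2$ and $K=K_1$ to get $K_1\cap K_2$ compact. In the doubled-endpoint space the analogous set $\{y : [0,1]\cap{\ua}y\subseteq U\}=U\cup\{0'\}$ is in general not open, which is precisely what the Scott topology rules out.
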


\begin{proposition}\label{prop-CI-WF-sober} (\cite[Theorem 4.2]{Xu-Shen-Xi-Zhao-2020-1})
	Let $X$ be a first-countable well-filtered space. Then $X$ is sober.
\end{proposition}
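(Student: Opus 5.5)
The plan is to argue by contradiction, using well-filteredness to manufacture a generic point. First, recall that a well-filtered space is a $d$-space. Hence every irreducible closed set $A$ is closed under directed suprema, and Zorn's lemma gives maximal elements in $A$. Fix $A\in\ir_c(X)$. Since $X$ is $T_0$, the point $x$ with $A=\overline{\{x\}}$ is automatically unique, so it suffices to find $x\in A$ with $A\subseteq\overline{\{x\}}$. Equivalently, we need $x\in A$ lying in every open set $W$ with $W\cap A\neq\emptyset$. The key structural fact is that, by irreducibility of $A$, the family $\mathcal F_A=\{W\in\mathcal O(X): W\cap A\neq\emptyset\}$ is closed under finite intersections. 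So we are looking for a point of $A$ in $\bigcap\mathcal F_A$.

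To find it, I would use first-countability to produce a sequence in $A$ whose tails form a filtered family of compact saturated sets. Fix a point $a\in A$, chosen maximal in $A$, with a decreasing countable neighbourhood base $U_1\supseteq U_2\supseteq\cdots$ at $a$. Suppose, for contradiction, that $A\not\subseteq\overline{\{a\}}$. Then there is some $W_0\in\mathcal F_A$ with $a\notin W_0$. Since $\mathcal F_A$ is a filter base, each $U_n\cap W_0$ meets $A$, so we can pick $x_n\in A\cap U_n\cap W_0$.

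The sequence $(x_n)$ converges to $a$. Hence each set $K_n={\ua}(\{a\}\cup\{x_m: m\geq n\})$ is compact saturated, and $\{K_n\}$ is a filtered (decreasing) family in $Q(X)$. The intended contradiction runs as follows. If we can show $\bigcap_n K_n\subseteq X\setminus\overline{\{\ldots\}}$ for a suitable open set excluded by some $K_n$, well-filteredness gives a single $K_n$ inside that open set. That in turn forces the closed set $X\setminus W_0$, or $A\cap(X\setminus W_0)$, to miss a neighbourhood of $a$, contradicting $a\in A\setminus W_0$ together with the choice of the $x_n$. To make this work I expect to iterate the construction over the countably many open sets of the form $U_n\cap W$ with $W\in\mathcal F_A$. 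The goal of the iteration is to show that $\bigcap_n K_n={\ua}a$ and that every $W\in\mathcal F_A$ must contain some $K_n$, hence contain $a$.

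The main obstacle is exactly this last step: turning "every $W\in\mathcal F_A$ meets every $U_n$" into "$a\in W$". First-countability only controls neighbourhoods of one point, while $\mathcal F_A$ may be uncountable. My first attempt would be to combine the maximality of $a$ in $A$ with a Rudin-type lemma. The candidate family is $\{{\ua}(A\cap U_n)\}$ or the compact sets $K_n$ above, viewed as a filtered family whose intersection is ${\ua}a$. Well-filteredness should then show that any open $W\supseteq{\ua}a$ contains a tail. Conversely, for $W\in\mathcal F_A$, the same filtered-family argument applied to $A\setminus W$, if it were nonempty near $a$, should yield a contradiction with irreducibility of $A$.
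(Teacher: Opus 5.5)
Your argument stalls exactly where you say it does, and the reason is structural. The family $K_n={\ua}(\{a\}\cup\{x_m : m\geq n\})$ carries no information. Every $K_n$ contains $a$, so any open $V\supseteq\bigcap_n K_n$ is a neighbourhood of $a$. Since $x_m\to a$ and $V$ is an upper set, $V\supseteq K_N$ for large $N$. Thus the well-filtered condition holds for this family in every $T_0$-space, and neither well-filteredness nor the maximality of $a$ is ever used.

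For the same reason your intended endgame is circular. $W_0$ can never contain a $K_n$, because $a\in K_n\setminus W_0$. So ``every $W\in\mathcal F_A$ contains some $K_n$'' just restates the conclusion $a\in W_0$, and ``any open $W\supseteq{\ua}a$ contains a tail'' is plain convergence. More generally, adjoining the limit of a sequence from the closed set $A$ always puts a point of $A$ into every member of the family. Your proposal also never specifies the open set to which well-filteredness is applied. The gap is the central step, not a detail.

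The missing idea is to use first-countability at infinitely many points, building compact sets that need no limit adjoined.

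Assume $A$ is not a point closure, so for every $m\in A$ there is $W\in\mathcal F_A$ with $m\notin W$. Choose inductively $m_1,m_2,\ldots\in\max(A)$, sets $W_j\in\mathcal F_A$ with $m_j\notin W_j$, and decreasing countable bases $(U^{(j)}_i)_i$ at $m_j$, such that $m_{k+1}\in\bigcap_{j\leq k}(U^{(j)}_k\cap W_j)$. This is possible for two reasons. The finite intersection meets $A$ by irreducibility. Any point of $A$ in it can be pushed up to a maximal point of $A$ that stays inside it, since open sets are upper sets; this is where the $d$-space property and Zorn are used.

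The $m_k$ are then pairwise distinct, because $m_{k+1}\in W_j\not\ni m_j$. The sequence also converges to each of its own terms $m_j$. Hence $K_n={\ua}\{m_k : k\geq n\}$ is compact saturated: a cover member containing $m_n$ already contains almost all $m_k$. The $K_n$ decrease, and $m_n\in K_n\cap A$.

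Now suppose $w\in A\cap\bigcap_n K_n$. Then $w\geq m_k$ for infinitely many $k$, and maximality forces $w=m_k$ for infinitely many distinct $m_k$, which is absurd. Hence $\bigcap_n K_n\subseteq X\setminus A$, and well-filteredness gives $K_n\subseteq X\setminus A$ for some $n$, contradicting $m_n\in A$. Only one $W_j$ per chosen point is needed, so the possible uncountability of $\mathcal F_A$ that worried you plays no role.
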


For the well-filteredness, a similar result to Theorem \ref{theor-Schalk-Heckman-Keimel-theorem} was obtained in \cite{Xu-Shen-Xi-Zhao-2020-2, Xu-Xi-Zhao-2021}.

\begin{theorem}\label{Smythwf} (\cite[Theorem 5.3]{Xu-Shen-Xi-Zhao-2020-2} or \cite[Theorem 4]{Xu-Xi-Zhao-2021})
	For a $T_0$-space $X$, the following conditions are equivalent:
\begin{enumerate}[\rm (1)]
		\item $X$ is well-filtered.
        \item $P_S(X)$ is a $d$-space.
        \item $P_S(X)$ is well-filtered.
\end{enumerate}
\end{theorem}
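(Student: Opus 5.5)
The plan is to prove the implications in the order (3)$\Rightarrow$(2)$\Rightarrow$(1)$\Rightarrow$(3). Throughout we use that the specialization order of $P_S(X)$ is the Smyth order $\sqsubseteq$, since $K_1\in\overline{\{K_2\}}$ iff every $\Box U$ containing $K_1$ contains $K_2$, iff $K_2\subseteq K_1$.

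For (3)$\Rightarrow$(2) we invoke the standard fact that every well-filtered space $Y$ is a $d$-space. Given a directed set $D\subseteq Y$, the family $\{{\ua}d : d\in D\}$ is a filtered family in $Q(Y)$. Take $y\in\overline{D}$ and suppose $y$ is not an upper bound, so $d_0\not\leq y$ for some $d_0\in D$. Then $\bigcap_{d\in D}{\ua}d\subseteq Y\setminus{\da}y$, and well-filteredness gives some $d\in D$ with ${\ua}d\subseteq Y\setminus{\da}y$. This contradicts $y\in\overline{D}$, since directedness and $y\in\overline{D}$ force ${\ua}d\cap{\da}y\neq\emptyset$. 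The remaining details (that $\overline{D}={\da}y$ for a greatest point $y$, and that open sets are Scott open) follow by a similar argument.

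For (2)$\Rightarrow$(1), let $\mathcal K\subseteq Q(X)$ be filtered and $U\in\mathcal O(X)$ with $\bigcap\mathcal K\subseteq U$. Then $\mathcal K$ is directed in $(Q(X),\sqsubseteq)$, so since $P_S(X)$ is a $d$-space it has a supremum $K_0\in Q(X)$. In particular $K_0\subseteq K$ for all $K\in\mathcal K$, hence $K_0\subseteq\bigcap\mathcal K\subseteq U$, i.e. $K_0\in\Box U$. Because $\Box U$ is open in the $d$-space $P_S(X)$, it is Scott open, so $\mathcal K\cap\Box U\neq\emptyset$, i.e. $K\subseteq U$ for some $K\in\mathcal K$. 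Note that we do not even need $K_0=\bigcap\mathcal K$.

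For (1)$\Rightarrow$(3), let $X$ be well-filtered. First I would show that $P_S(X)$ is a $d$-space. For a filtered $\mathcal K\subseteq Q(X)$, well-filteredness implies that $\bigcap\mathcal K$ is nonempty and compact: cover it by opens and apply the defining property to the union of a finite subcover. Hence $\bigcap\mathcal K$ is the supremum of $\mathcal K$ in the Smyth order, and each $\Box U$ is Scott open by well-filteredness itself.

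The main step is the full well-filteredness of $P_S(X)$. Let $\{\mathcal K_i\}_{i\in I}$ be a filtered family in $Q(P_S(X))$ and $\mathcal U$ open with $\bigcap_i\mathcal K_i\subseteq\mathcal U$, and suppose for contradiction that no $\mathcal K_i\subseteq\mathcal U$. Apply the topological Rudin lemma to the closed set $Q(X)\setminus\mathcal U$, which meets every $\mathcal K_i$. This yields a minimal closed set $\mathcal A\subseteq Q(X)\setminus\mathcal U$ meeting all $\mathcal K_i$, and $\mathcal A$ is irreducible. I would then use the sets $\bigcup\mathcal K_i$: each is compact saturated in $X$, since a compact subset of $P_S(X)$ has compact union. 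The aim is to transfer the problem to $X$ by showing that $\bigcap\mathcal A$ is then forced into some basic $\Box U\subseteq\mathcal U$ while avoiding it. Concretely, I would show that every $V\in\mathcal O(X)$ with $\bigcap\mathcal A\subseteq V$ contains some member of $\mathcal A$. This is a well-filtered analogue of condition (2) of Theorem~\ref{theor-Schalk-Heckman-Keimel-theorem}, restricted to irreducible sets arising from Rudin's lemma (so-called $\mathsf{WD}$-sets), and it is where well-filteredness rather than sobriety must suffice. Minimality of $\mathcal A$ plus compactness of the $\mathcal K_i$ should provide filtered families in $Q(X)$ to which the well-filtered property of $X$ can be applied.

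I expect this last transfer, handling an irreducible but non-directed $\mathcal A$, to be the main obstacle. I would attack it by building, for each finite $F\subseteq I$, the compact saturated sets ${\ua}\bigcup(\mathcal A\cap\mathcal K_i)$ and checking that they form a filtered family in $Q(X)$.
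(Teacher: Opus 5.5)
The paper does not prove this theorem; it quotes it from the literature. Your overall plan is the standard one: Rudin's lemma in $P_S(X)$, then pushing down to $X$ via unions. Your (2)$\Rightarrow$(1) step is correct.

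\textbf{The main gap is (1)$\Rightarrow$(3).} You state the key claim (every open $V\supseteq\bigcap\mathcal A$ contains a member of $\mathcal A$) and then stop. You also place the difficulty in the wrong spot: irreducibility of $\mathcal A$ plays no role, minimality does.

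Put $K_i=\bigcup(\mathcal A\cap\mathcal K_i)$. Since $\mathcal A\cap\mathcal K_i$ is a nonempty compact subset of $P_S(X)$, $K_i\in Q(X)$. The family $\{K_i : i\in I\}$ is filtered simply because $\{\mathcal K_i : i\in I\}$ is; no finite $F\subseteq I$ or extra ${\ua}$ is needed.

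The missing idea is the identity $\bigcap_{i}K_i=\bigcap\mathcal A$. The inclusion $\supseteq$ is trivial. For $\subseteq$, suppose $x\notin G_0$ for some $G_0\in\mathcal A$. Then $\{G\in\mathcal A : x\in G\}=\mathcal A\setminus\Box(X\setminus{\da}x)$ is a closed proper subset of $\mathcal A$. By minimality it misses some $\mathcal K_i$, i.e.\ $x\notin K_i$.

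Now apply well-filteredness of $X$ to $\{K_i\}$. This shows that $K^{\ast}=\bigcap\mathcal A$ is nonempty, compact and saturated. It also shows that $K^{\ast}\subseteq V$ forces $K_i\subseteq V$ for some $i$, so every $G\in\mathcal A\cap\mathcal K_i$ lies in $\Box V$.

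You must also close the argument. Your phrase about $\bigcap\mathcal A$ being ``forced into some basic $\Box U\subseteq\mathcal U$ while avoiding it'' does not do this. The correct finish has two steps:
\begin{itemize}
\item Every basic neighbourhood $\Box V$ of $K^{\ast}$ meets the closed set $\mathcal A$, so $K^{\ast}\in\mathcal A$.
\item Each $\mathcal K_i$ contains some $G\supseteq K^{\ast}$ and is an upper set in the Smyth order, so $K^{\ast}\in\bigcap_i\mathcal K_i\subseteq\mathcal U$.
\end{itemize}
Together these contradict $\mathcal A\cap\mathcal U=\emptyset$.

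\textbf{Your sketch of (3)$\Rightarrow$(2) is wrong as written.} The claim that directedness and $y\in\overline{D}$ force ${\ua}d\cap{\da}y\neq\emptyset$ is false: take $D=\{d_1<d_2\}$, $y=d_1$, $d=d_2$. Your argument would show that every point of $\overline{D}$ is an upper bound of $D$.

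The correct version applies well-filteredness to $\{{\ua}d : d\in D\}$ and the open set $Y\setminus\overline{D}$. Since each ${\ua}d$ meets $\overline{D}$, there is $y\in\overline{D}\cap\bigcap_{d\in D}{\ua}d$. Hence $\overline{D}={\da}y$, $y=\sup D$, and every open set containing $y$ meets $D$.

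\textbf{A smaller slip.} To see that $\bigcap\mathcal K$ is compact, apply well-filteredness to the union of the whole cover, then use compactness of the resulting $K\in\mathcal K$. There is no finite subcover to start from.
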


The following two results are well-known (see, e.g., \cite{GHKLMS-2003, Kou-2001, Lawson-Wu-Xi-2020, Xu-Shen-Xi-Zhao-2020-2}, \cite[Theorem 3.4]{Wyler-1981}, \cite[IV 4.2.1]{Artin-Grothendieck-1972} and \cite[Corollary 6.9]{Xu-Shen-Xi-Zhao-2020-2}).

\begin{theorem}\label{theor-LC-sober=LC-wf=CC-sober}  For a $T_0$-space $X$, the following conditions are equivalent:
\begin{enumerate}[\rm (1)]
	\item $X$ is locally compact and sober.
	\item $X$ is locally compact and well-filtered.
	\item $X$ is core-compact and sober.
   \item $X$ is core-compact and well-filtered.
\end{enumerate}
\end{theorem}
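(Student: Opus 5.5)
The plan is to prove the cycle of implications $(1)\Rightarrow(2)\Rightarrow(4)$ and $(1)\Rightarrow(3)\Rightarrow(4)$, and then close the loop with $(4)\Rightarrow(2)\Rightarrow(1)$.

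\textbf{Trivial and easy directions.} Since every locally compact space is core-compact (Remark \ref{rem-Hofmann-Lawson}), we get $(1)\Rightarrow(3)$ and $(2)\Rightarrow(4)$ at once. Since every sober space is well-filtered (a standard fact, obtainable from Theorem \ref{theor-Schalk-Heckman-Keimel-theorem}, because filtered families in $Q(X)$ are irreducible in $P_S(X)$), we also get $(1)\Rightarrow(2)$ and $(3)\Rightarrow(4)$. Everything therefore reduces to two steps: $(4)\Rightarrow(2)$ and $(2)\Rightarrow(1)$.

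\textbf{Step $(2)\Rightarrow(1)$.} Let $X$ be locally compact and well-filtered, and let $A\in\ir_c(X)$. Consider the family $\mathcal K$ of all $K\in Q(X)$ such that $\ii K\cap A\neq\emptyset$.
\begin{enumerate}[\rm (a)]
\item $\mathcal K$ is filtered. Given $K_1,K_2\in\mathcal K$, the open set $\ii K_1\cap \ii K_2$ meets $A$, by irreducibility of $A$. Local compactness then supplies a $K_3\in Q(X)$ with $\ii K_3\cap A\neq\emptyset$ and $K_3\subseteq \ii K_1\cap\ii K_2$.
\item $\bigcap\mathcal K\cap A\neq\emptyset$. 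Suppose instead that $\bigcap\mathcal K\subseteq X\setminus A$. Well-filteredness gives some $K\in\mathcal K$ with $K\subseteq X\setminus A$, which contradicts $\ii K\cap A\neq\emptyset$.
\end{enumerate}
Now pick $x\in\bigcap\mathcal K\cap A$; we show $A=\overline{\{x\}}$. If $A\not\subseteq\overline{\{x\}}$, then $A$ meets the open set $X\setminus\overline{\{x\}}$. Local compactness yields a $K\in\mathcal K$ inside $X\setminus\overline{\{x\}}$, so $x\notin K$, a contradiction. Uniqueness of $x$ follows from $T_0$.

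\textbf{Step $(4)\Rightarrow(2)$, the main obstacle.} Fix $U$ open with $x\in U$. Since $\mathcal O(X)$ is continuous, we can choose a chain $\cdots\ll U_{n+1}\ll U_n\ll\cdots\ll U_1\ll U$ with $x\in U_n$ for all $n$, by interpolating $\ll$ repeatedly. Using interpolation indexed by dyadic rationals, we can even make this a filtered family $\{U_r\}$. Set $K=\bigcap_n {\ua}U_n=\bigcap_n U_n$ (these sets are saturated). We claim $K$ is compact; it clearly contains ${\ua}x$ and lies inside $U$.

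To prove the claim, take a directed open cover $\{W_i\}$ of $K$. We want $K\subseteq W_i$ for some $i$. Suppose not. Each closed set $F_i=X\setminus W_i$ meets every $U_n$. Here well-filteredness has to enter, but $\{U_n\}$ are open sets rather than compact ones, so it cannot be applied directly.

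The standard route I would follow is the Hofmann–Mislove style argument. Consider the Scott-open filter $\mathcal F=\{V\in\mathcal O(X): U_n\subseteq V \text{ for some } n\}$; it is Scott open because $U_{n+1}\ll U_n$. In a well-filtered core-compact space, each Scott-open filter $\mathcal F$ satisfies $\bigcap\mathcal F\in Q(X)$ and $\mathcal F=\{V: \bigcap\mathcal F\subseteq V\}$. I would prove this by a Zorn's lemma argument: pick a minimal closed set (or prime element) avoiding $\mathcal F$ and show it is irreducible. The difficulty is that without sobriety we cannot take a generic point of that irreducible set. I expect to circumvent this by replacing sobriety with well-filteredness applied to a suitable filtered family of compact saturated sets built from $\ll$.

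If that direct route stalls, I would fall back on the known equivalence that a core-compact well-filtered space is sober, as cited in \cite{Lawson-Wu-Xi-2020, Xu-Shen-Xi-Zhao-2020-2}, and then use the classical fact that core-compact sober spaces are locally compact via Hofmann–Mislove. This fallback is consistent with the theorem being listed as well-known.
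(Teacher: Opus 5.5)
Your easy implications are correct, and so is your proof of $(2)\Rightarrow(1)$. It is the standard argument: the family $\{K\in Q(X):\ii K\cap A\neq\emptyset\}$ is nonempty and filtered, well-filteredness puts a point of $A$ into its intersection, and local compactness makes that point generic. The paper gives no proof of this theorem; it quotes it as well known. Its hard part is $(4)\Rightarrow(3)$, that core-compact well-filtered spaces are sober, which comes from the Lawson--Wu--Xi and Xu--Shen--Xi--Zhao papers. After that, $(3)\Rightarrow(1)$ is the classical consequence of the Hofmann--Mislove theorem. Your fallback is exactly this, so with it the proposal is complete, but only by citation for the one hard step.

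Your attempted direct proof of $(4)\Rightarrow(2)$ does not work as written, for two reasons.

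First, even if $K=\bigcap_n U_n$ were compact, a chain chosen only with $x\in U_n$ need not give a neighbourhood of $x$. In $\mathbb{R}$, with $U=(-2,2)$ and $U_n=(-1/n,1/n)$, you have $U_{n+1}\ll U_n\ll U$ but $K=\{0\}$. You must first fix an open $V\ni x$ with $V\ll U$ and interpolate between $V$ and $U$, so that $x\in V\subseteq \ii K$.

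Second, and decisively, your claim about Scott-open filters is essentially the theorem itself, not a lemma on the way to it. The claim is that in a core-compact well-filtered space every Scott-open filter $\mathcal F\subseteq\mathcal O(X)$ satisfies $\mathcal F=\{W\in\mathcal O(X):\bigcap\mathcal F\subseteq W\}$ with $\bigcap\mathcal F$ compact.
\begin{itemize}
\item Your Zorn step gives a maximal, hence prime, open set $P\notin\mathcal F$ containing $\bigcap\mathcal F$.
\item So $X\setminus P$ is an irreducible closed set that meets every member of $\mathcal F$ and misses $\bigcap\mathcal F$.
\item To reach a contradiction you need a generic point of $X\setminus P$, which is precisely the sobriety you are trying to prove.
\end{itemize}
Well-filteredness only speaks about filtered families in $Q(X)$, and core-compactness supplies none. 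In the Hofmann--Lawson space of Remark~\ref{rem-Hofmann-Lawson} every compact set has empty interior, so no compact set ever sits between $U_{n+1}\ll U_n$ when $U_{n+1}\neq\emptyset$. Any such sets would have to be manufactured from well-filteredness itself. Your sentence about ``a suitable filtered family of compact saturated sets built from $\ll$'' therefore names the missing idea rather than supplying it; producing such families is what needed genuinely new arguments in the papers you cite.

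Without that citation, what you have established is $(1)\Leftrightarrow(2)\Rightarrow(3)\Rightarrow(4)$, plus $(3)\Rightarrow(1)$ via Hofmann--Mislove. The implication $(4)\Rightarrow(3)$ remains unproved.
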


\begin{theorem}\label{theor-sober-WF-d-space-reflective} $\mathbf{Sob}$, $\mathbf{Top}_d$ and $\mathbf{Top}_w$ are all reflective in $\mathbf{Top}_0$.
\end{theorem}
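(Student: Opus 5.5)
The plan is to build the sober reflection explicitly and then obtain the other two reflections as subspaces of it. For a $T_0$-space $X$, I will use the canonical sobrification $X^s=P_H(\ir_c(X))$ with $\eta_X(x)=\overline{\{x\}}$. First, $X^s$ is sober; this follows from the same argument as Proposition \ref{prop-Hoare-is-sober}, using the upper topology on $\ir_c(X)$, or it can be checked directly. Second, $\eta_X$ is continuous and in fact a topological embedding, since $\eta_X^{-1}(\Diamond U)=U$.

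Given a continuous map $f:X\to Y$ with $Y$ sober, I will define $f^*(A)$ to be the unique $y\in Y$ with $\overline{f(A)}=\overline{\{y\}}$. This is well defined because $f(A)$ is irreducible whenever $A$ is. The identity $(f^*)^{-1}(V)=\Diamond f^{-1}(V)$ gives continuity, and $f^*\circ\eta_X=f$ is immediate. For uniqueness, every $A\in\ir_c(X)$ is the closure of $\eta_X(A)$ in $X^s$. Two continuous maps into a $T_0$-space that agree on $\eta_X(X)$ must therefore agree at $A$: each map sends $A$ to a point whose closure equals the closure of the image of $\eta_X(A)$, and $T_0$ separates such points. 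This settles reflectivity of $\mathbf{Sob}$.

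For $\mathbf{Top}_d$ and $\mathbf{Top}_w$ I will follow the Keimel--Lawson scheme. For $\mathbf{K}\in\{\mathbf{Top}_d,\mathbf{Top}_w\}$, call a subset of a sober space $Z$ a \emph{$\mathbf{K}$-set} if it is a $\mathbf{K}$-space in the subspace topology. Three facts are needed:
\begin{enumerate}
\item[(a)] every sober space lies in $\mathbf{K}$;
\item[(b)] arbitrary intersections of $\mathbf{K}$-sets of $Z$ are $\mathbf{K}$-sets;
\item[(c)] if $g:Z\to Z'$ is continuous between sober spaces and $B\subseteq Z'$ is a $\mathbf{K}$-set, then $g^{-1}(B)$ is a $\mathbf{K}$-set.
\end{enumerate}
Granting these, let $\widetilde X$ be the intersection of all $\mathbf{K}$-sets of $X^s$ containing $\eta_X(X)$. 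Given $f:X\to Y$ with $Y\in\mathbf{K}$, I embed $Y$ into $Y^s$ and extend to $(\eta_Y\circ f)^*:X^s\to Y^s$. By (c), the preimage of $\eta_Y(Y)$ under this map is a $\mathbf{K}$-set containing $\eta_X(X)$, so it contains $\widetilde X$. Hence the map restricts to a continuous map $\widetilde X\to Y$ extending $f$. Uniqueness will come from the fact that $\eta_X(X)$ is dense in every such $\mathbf{K}$-closure, combined with the same closure argument used in the sober case.

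For $d$-spaces, (a)--(c) are straightforward from one characterization. A subset $A$ of a sober $Z$ is a $d$-set iff it is closed under suprema, taken in $Z$, of directed subsets of $A$. Intersections clearly preserve this. Continuous maps between sober spaces, which are $d$-spaces, preserve directed suprema, so preimages preserve it as well.

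For well-filtered spaces, (a) is standard, since sober spaces are well-filtered. The main obstacle is (b) and (c). My first attempt will be an intrinsic description of well-filtered subsets $A\subseteq Z$. If $\{K_i\}$ is a filtered family in $Q(A)$, then $\{{\ua}_Z K_i\}$ is a filtered family in $Q(Z)$. Since $Z$ is sober, hence well-filtered, its intersection is compact and behaves well with respect to open sets. I then expect the following equivalence: $A$ is well-filtered iff, for every such family with $\bigcap_i {\ua}_Z K_i\cap A=\emptyset$, the intersection $\bigcap_i {\ua}_Z K_i$ is already empty. Equivalently, $A$ meets every nonempty intersection of this form; the proof would use the well-filteredness of $Z$ and compactness. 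A condition of the shape ``$A$ meets certain sets determined by subsets of $A$'' is stable under intersections, because the relevant compact families of a smaller set are also families of the larger set. It is also stable under preimages, because images of compact sets are compact and $g$ commutes with the intersections by the well-filteredness of $Z'$. Checking these stability claims carefully is the delicate part of the proof.
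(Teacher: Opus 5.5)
Your treatment of $\mathbf{Sob}$ and $\mathbf{Top}_d$ is fine. The extension $f^*$ is correct. The uniqueness argument works because $\cl_{X^s}\eta_X(A)=\cl_{X^s}\{A\}$, so it passes to every subspace containing $\eta_X(X)$. The characterization of $d$-subspaces of a sober space as the subsets closed under directed suprema also checks out, as does the Keimel--Lawson intersection/preimage scheme. (The paper itself gives no proof of this theorem; it quotes it as well known, with references.)

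The gap is in $\mathbf{Top}_w$, where the criterion you propose is false. Since $Z$ is well-filtered and every $K_i$ is nonempty, $\bigcap_i{\ua}_ZK_i$ is never empty. Since each $K_i$ is an upper set of $A$, $A\cap\bigcap_i{\ua}_ZK_i=\bigcap_iK_i$. So your condition only says that every filtered family in $Q(A)$ has nonempty intersection, which is necessary but not sufficient for well-filteredness.

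The space $(P,\tau)$ with $P=\mathbb{N}^+\cup\{\infty\}$ from Example~\ref{exm-Scott-power-space-strong-$R$-space-X-not-WF} is a counterexample. Every member of $Q((P,\tau))$ contains $\infty$, yet $(P,\tau)$ is not well-filtered. Hence its copy $\eta(P)$ inside the sober space $P^s$ satisfies your condition without being a well-filtered subspace. So (b) and (c) for $\mathbf{Top}_w$ are left unproved, and (c) is exactly the nontrivial part of the theorem.

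To close the gap, use the topological Rudin lemma instead of a criterion on intersections. For (c), let $A=g^{-1}(B)$, let $\{K_i\}\subseteq Q(A)$ be filtered, and let $U$ be open in $A$ with $\bigcap_iK_i\subseteq U$ but no $K_i\subseteq U$.

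\begin{enumerate}
\item[(1)] By Zorn's lemma there is a closed set $C\subseteq A\setminus U$ of $A$ that is minimal among closed sets of $A$ meeting every $K_i$. Such a $C$ is irreducible, so $\cl_Z C=\cl_Z\{x\}$ for some $x\in Z$.
\item[(2)] Pulling back closed sets shows that $\cl_B g(C)$ is minimal among closed sets of $B$ meeting every member of the filtered family $\{{\ua}_B g(K_i\cap C)\}\subseteq Q(B)$.
\item[(3)] Since $B$ is well-filtered, this minimal set must meet $\bigcap_i{\ua}_Bg(K_i\cap C)$. Hence it equals $\cl_B\{b\}$ for some $b\in B$.
\item[(4)] Now $\cl_{Z'}\{g(x)\}=\cl_{Z'}g(C)=\cl_{Z'}\{b\}$, and $T_0$ forces $g(x)=b\in B$. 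Thus $x\in A\cap\cl_Z C=C$.
\item[(5)] Moreover $\cl_A\{x\}=C$ meets every $K_i$, so $x\in\bigcap_iK_i\subseteq U$. This contradicts $C\cap U=\emptyset$.
\end{enumerate}

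The same argument applied to the inclusions $A=\bigcap_jA_j\hookrightarrow A_j$ gives $x\in A_j$ for every $j$, which proves (b).

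For reference, the correct intersection-type criterion is $\min(\bigcap_i{\ua}_ZK_i)\subseteq A$. It gives (b) at once, but not (c), because $g$ need not send minimal points to minimal points.
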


A poset $P$ is said to be \emph{Noetherian} if it satisfies the \emph{ascending chain condition} ($\mathrm{ACC}$ for short): every ascending chain has a greatest member. It is easy to verify that $P$ is Noetherian if{}f every directed set of $P$ has a largest element. So every Noetherian poset is a dcpo.

\begin{proposition}\label{prop-Alexandroff-topology-sober} (\cite[Proposition 11]{Lawson-Xu-2024-2}).
	For a poset $P$, the following conditions are equivalent:
	\begin{enumerate}[\rm (1)]
		\item $P$ is Noetherian.
        \item $P$ is a dcpo and $\alpha(P)=\sigma(P)$.
        \item $P$ is a dcpo and $x\in K(P)$ for any $x\in P$.
\item $P$ is a dcpo and ${\uparrow}x\in \sigma(P)$ for any $x\in P$.
       \item $(P,\alpha(P))$ is sober.
		\item $(P,\alpha(P))$ is well-filtered.
		\item $(P,\alpha(P))$ is a $d$-space.
	\end{enumerate}
\end{proposition}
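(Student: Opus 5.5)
We prove the equivalences along the cycle $(1)\Rightarrow(3)\Rightarrow(4)\Rightarrow(2)\Rightarrow(1)$ and then $(1)\Rightarrow(5)\Rightarrow(6)\Rightarrow(7)\Rightarrow(1)$.

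\textbf{(1)$\Rightarrow$(3).} Assume $P$ is Noetherian, so $P$ is a dcpo. Take $x\in P$ and a directed $D$ with $x\le\vee D$. By ACC, $D$ has a largest element $d$, and then $\vee D=d$. Hence $x\le d\in D$, so $x\ll x$.

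\textbf{(3)$\Rightarrow$(4).} If $x\ll x$, let $D$ be directed with $\vee D\in{\uparrow}x$. Then $x\le\vee D$, so $x\le d$ for some $d\in D$. Thus ${\uparrow}x$ meets $D$ and is Scott open.

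\textbf{(4)$\Rightarrow$(2).} Every upper set $U$ equals $\bigcup_{x\in U}{\uparrow}x$, a union of Scott open sets. So $\alpha(P)\subseteq\sigma(P)$. The reverse inclusion always holds.

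\textbf{(2)$\Rightarrow$(1).} Let $C$ be an ascending chain. Since $P$ is a dcpo, $\vee C$ exists. The set ${\uparrow}\vee C$ is an upper set, hence Scott open by (2). It contains $\vee C$, so it contains some $c\in C$. Then $c\ge\vee C$, so $c$ is the greatest member of $C$.

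\textbf{(1)$\Rightarrow$(5).} By the cited fact, the nonempty irreducible subsets of $(P,\alpha(P))$ are exactly the directed sets. Let $A$ be an irreducible closed set, i.e.\ a directed lower set. By (1), $A$ has a largest element $a$, so $A={\downarrow}a=\overline{\{a\}}$. Uniqueness of $a$ follows from $T_0$, so $(P,\alpha(P))$ is sober.

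\textbf{(5)$\Rightarrow$(6)$\Rightarrow$(7).} These are the standard implications sober $\Rightarrow$ well-filtered $\Rightarrow$ $d$-space. For the second, given directed $D$, the family $\{{\uparrow}d: d\in D\}$ is filtered in $Q(X)$. Well-filteredness then shows that directed sups exist and that open sets are Scott open.

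\textbf{(7)$\Rightarrow$(1).} If $(P,\alpha(P))$ is a $d$-space, then $P$ is a dcpo and $\alpha(P)=\mathcal O\subseteq\sigma(P)$. This gives $\alpha(P)=\sigma(P)$, which is (2), and we conclude by (2)$\Rightarrow$(1).

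The main point needing care is (6)$\Rightarrow$(7), i.e.\ that well-filtered spaces are $d$-spaces. Rather than reprove it, I would cite it as standard (e.g.\ \cite{GHKLMS-2003, Xu-Shen-Xi-Zhao-2020-2}). Alternatively, (6)$\Rightarrow$(1) can be shown directly. Suppose a strictly ascending chain $C$ had no greatest member. The filtered family $\{{\uparrow}c : c\in C\}$ in $\mathbf{Fin}~\!P=Q((P,\alpha(P)))$ (Lemma \ref{lem-compact-saturated-in-Alexanderoff-topologyin}) has intersection the set of upper bounds of $C$. This intersection is contained in the Alexandroff open set $P\setminus{\downarrow}C$, because no element of ${\downarrow}C$ can be an upper bound of a chain with no greatest member. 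But no ${\uparrow}c$ is contained in $P\setminus{\downarrow}C$, since $c$ lies in both. This contradicts well-filteredness.
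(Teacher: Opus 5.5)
Your proof is correct. The paper gives no proof of this proposition: it is quoted from Lawson and Xu (their Proposition 11) and used as a black box, so there is no internal argument to compare with. Your two cycles, $(1)\Rightarrow(3)\Rightarrow(4)\Rightarrow(2)\Rightarrow(1)$ and $(1)\Rightarrow(5)\Rightarrow(6)\Rightarrow(7)\Rightarrow(1)$, are a sound self-contained substitute, with only standard facts imported.

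A few points to tighten.
\begin{itemize}
\item In $(1)\Rightarrow(3)$ and $(1)\Rightarrow(5)$ you use that under ACC every directed set has a largest element. The paper calls this easy, but it needs a choice principle to get a maximal element: Zorn's lemma, or dependent choice if ascending chains are read as sequences. Directedness then makes that maximal element largest. Say this in a line.
\item In $(7)\Rightarrow(1)$ and $(1)\Rightarrow(5)$, state explicitly that the specialization order of $(P,\alpha(P))$ is the given order of $P$, because $\overline{\{y\}}={\downarrow}y$.
\item Your one-line sketch of $(6)\Rightarrow(7)$ is not yet a proof. The actual argument applies well-filteredness to $\bigcap_{d\in D}{\uparrow}d\subseteq X\setminus\overline{D}$ to produce an upper bound of $D$ inside $\overline{D}$, which is then $\vee D$. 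So either cite it, or use your direct argument for $(6)\Rightarrow(1)$, which is correct.
\item If you take the direct $(6)\Rightarrow(1)$ route, $(7)$ is no longer reached by any arrow. Add $(2)\Rightarrow(7)$, which is immediate: $P$ is a dcpo in its specialization order and $\mathcal O((P,\alpha(P)))=\alpha(P)=\sigma(P)$.
\end{itemize}

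The only external input then left is sober $\Rightarrow$ well-filtered for $(5)\Rightarrow(6)$, which is standard.
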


As strengthened versions of $d$-spaces and well-filtered spaces, the following two notions were introduced in \cite{Xu-2026, Xu-Zhao-2020}.

\begin{definition}\label{strong $d$-space} (\cite[Definition 3.18]{Xu-Zhao-2020}) A $T_0$-space $X$ is called a \emph{strong} $d$-\emph{space} if for any $D\in \mathcal D(X)$, $x\in X$ and $U\in \mathcal O(X)$, $\bigcap_{d\in D}\ua d\cap \ua x\subseteq U$ implies $\ua d\cap \ua x\subseteq U$ for some $d\in D$.
\end{definition}

\begin{definition}(\cite[Definition 4.1]{Xu-2026}) \label{def-strong-WF} A $T_0$-space $X$ is called \emph{strongly well}-\emph{filtered} if for any filtered family $\{K_d : d\in D\}\subseteq Q(X)$, $K\in Q(X)$ and $U\in \mathcal O(X)$, $\bigcap_{d\in D}K_d\cap K\subseteq U$ implies $K_d\cap K\subseteq U$ for some $d\in D$.
\end{definition}

\begin{proposition}\label{prop-strongly-WF-is-WF} (\cite[Proposition 4.3]{Xu-2026}) \emph{(1)} Every strongly well-filtered space is well-filtered.

\noindent \emph{(2)} Every strongly well-filtered space is a strong $d$-space.

\noindent \emph{(3)} Every coherent well-filtered space is strongly well-filtered.

\noindent \emph{(4)} Every $T_2$-space is coherent and sober and hence strongly well-filtered.
\end{proposition}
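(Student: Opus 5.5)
The plan is to derive all four parts directly from the definitions, using Definition \ref{def-strong-WF} with a suitable choice of the extra compact saturated set $K$. Parts (1) and (2) are specializations, part (3) is a reduction to ordinary well-filteredness via coherence, and part (4) combines standard facts about $T_2$-spaces with part (3).

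For (1), I take a filtered family $\{K_d : d\in D\}\subseteq Q(X)$ and $U\in\mathcal O(X)$ with $\bigcap_{d\in D}K_d\subseteq U$. I fix some $d_0\in D$ and put $K=K_{d_0}$. Then $\bigcap_{d\in D}K_d\cap K=\bigcap_{d\in D}K_d\subseteq U$. Strong well-filteredness gives a $d$ with $K_d\cap K_{d_0}\subseteq U$. By filteredness there is $e\in D$ with $K_e\subseteq K_d\cap K_{d_0}$, hence $K_e\subseteq U$. I avoid taking $K=X$, since $X$ need not be compact.

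For (2), let $D\in\mathcal D(X)$ and $x\in X$. Then $\{{\ua}d : d\in D\}$ is a filtered family in $Q(X)$, because principal filters are compact and saturated and $D$ is directed. Also ${\ua}x\in Q(X)$. So the strong $d$-space condition of Definition \ref{strong $d$-space} is literally the instance $K_d={\ua}d$, $K={\ua}x$ of Definition \ref{def-strong-WF}.

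For (3), let $X$ be coherent and well-filtered, and suppose $\bigcap_{d}K_d\cap K\subseteq U$. If $K_d\cap K=\emptyset$ for some $d$, we are done trivially. Otherwise each $K_d\cap K$ is nonempty and saturated, and it is compact by coherence, so it lies in $Q(X)$. The family $\{K_d\cap K : d\in D\}$ is filtered: if $K_e\subseteq K_d\cap K_{d'}$, then $K_e\cap K\subseteq (K_d\cap K)\cap(K_{d'}\cap K)$. Its intersection is contained in $U$, so well-filteredness yields some $K_d\cap K\subseteq U$.

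For (4), in a $T_2$-space the specialization order is trivial, so every set is saturated. Compact sets are closed, so the intersection of two compact sets is a closed subset of a compact set and hence compact; thus $X$ is coherent. For sobriety, an irreducible closed set $A$ cannot contain two distinct points $a,b$. Indeed, disjoint open neighbourhoods $V_a\ni a$ and $V_b\ni b$ give the cover $A\subseteq (X\setminus V_a)\cup(X\setminus V_b)$, while $A$ is contained in neither piece. So $A=\{a\}=\overline{\{a\}}$, and uniqueness follows from $T_0$.

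Sober spaces are well-filtered. This follows from Theorem \ref{theor-Schalk-Heckman-Keimel-theorem}(2), since a filtered family in $Q(X)$ is directed in the Smyth order and hence irreducible in $P_S(X)$. Part (3) then gives strong well-filteredness.

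The only step needing any care is this last one: checking that a filtered family is irreducible in $P_S(X)$, or alternatively quoting the standard implication from sober to well-filtered. Everything else is routine.
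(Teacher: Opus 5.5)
Your proof is correct in all four parts. The paper gives no argument of its own here; it imports the statement from \cite[Proposition 4.3]{Xu-2026}. Your direct verification is the natural self-contained route. Parts (1) and (2) are specializations of Definition \ref{def-strong-WF}, with $K=K_{d_0}$, respectively $K_d={\ua}d$ and $K={\ua}x$. Part (3) reduces to ordinary well-filteredness via the filtered family $\{K_d\cap K\}$. Part (4) combines standard Hausdorff facts with sober $\Rightarrow$ well-filtered and part (3).

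The step you flagged is fine. The specialization order of $P_S(X)$ is exactly the Smyth order, because each $K\in Q(X)$ is the intersection of its open neighbourhoods: if $K_1\subseteq U$ implies $K_2\subseteq U$ for all open $U$, then $K_2\subseteq K_1$. Hence a filtered family is directed in $P_S(X)$, and therefore irreducible.

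If you want to avoid the Heckmann--Keimel--Schalk theorem, you can get strong well-filteredness of a $T_2$-space directly, without coherence. The sets $(K_d\cap K)\setminus U$ are closed subsets of the compact set $K$ with empty intersection, so finitely many of them already have empty intersection. Filteredness then yields a single $K_e$ with $K_e\cap K\subseteq U$. You would still need your coherence and sobriety arguments, since (4) asserts both explicitly.
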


\begin{lemma}\label{lem-Scott-sd-space-charac} (\cite[Lemma 3.25]{Xu-Zhao-2020}) For a poset $P$, the following two conditions are equivalent:
 \begin{enumerate}[\rm (1)]
 \item $\Sigma~\!\!P$ is a strong $d$-space.
 \item $P$ is a dcpo, and for any $x\in P$ and $A\in \Gamma(\Sigma~\!\!P)$, $\da (\ua x\cap A)\in \Gamma(\Sigma~\!\!P)$.
 \end{enumerate}
 \end{lemma}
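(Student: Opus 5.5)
We need to prove Lemma (Xu–Zhao 3.25): $\Sigma P$ is a strong $d$-space iff $P$ is a dcpo and for all $x\in P$ and Scott closed $A$, the set ${\da}({\ua}x\cap A)$ is Scott closed.

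For the direction (1)$\Rightarrow$(2), I first show that $P$ is a dcpo. Given $D\in\mathcal D(P)$, the strong $d$-space condition should force a supremum. A strong $d$-space is a $d$-space: take $x$ to be an element below every $d\in D$, or else argue directly. The cleanest route is to show directly that $\Sigma P$ is a $d$-space. Suppose $\bigcap_{d\in D}{\ua}d$ has no least element. Pick an upper bound $u$ (if any) and set $x=u$. Then $\bigcap_d {\ua}d\cap{\ua}u={\ua}u$.

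This does not immediately help, so I would instead use the standard fact that the specialization order of $\Sigma P$ is $\leq$. I would then consider the open set $U=P\setminus{\da}(\text{upper bounds})$.

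Honestly, I expect the dcpo part to go as follows. Suppose $\vee D$ fails to exist, and let $B=\bigcap_{d}{\ua}d$ be the set of upper bounds. The strong $d$ condition with any fixed $x\in B$ (if $B=\emptyset$, use $U=\emptyset$ together with $x\in D$: then $\bigcap_d{\ua}d\cap{\ua}x=\emptyset\subseteq\emptyset$ forces ${\ua}d\cap{\ua}x=\emptyset$, which is absurd for $d=x$). So $B\neq\emptyset$. For $b\in B$, if $b$ is not least in $B$, pick $b'\in B$ with $b\not\le b'$. Then $U=P\setminus{\da}b'$ is Scott open and contains ${\ua}b$. Hmm, but we need a $U$ containing $B\cap{\ua}x$ yet missing some ${\ua}d\cap{\ua}x$.

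Instead, take $x=d_0\in D$ and $U=P\setminus\bigcap_{b\in B}{\da}b$. The set $\bigcap_b{\da}b$ is Scott closed, since it is an intersection of Scott closed sets, and it contains $D$. Hence $U$ misses $D$. If $B$ has no least element, then $B\cap\bigcap_b{\da}b=\emptyset$, so $B\cap{\ua}d_0\subseteq U$. But then ${\ua}d\cap{\ua}d_0\subseteq U$ for some $d$ contradicts $d'\in U$ for an upper bound $d'\in D$ of $d,d_0$. So $\vee D$ exists.

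For the closure condition, let $E={\da}({\ua}x\cap A)$, which is a lower set. Take directed $D\subseteq E$ and suppose $\vee D\notin E$. Each $d$ lies below some point of ${\ua}x\cap A$, so ${\ua}d\cap{\ua}x\cap A\neq\emptyset$. Set $U=P\setminus A$. Then $\bigcap_d{\ua}d\cap{\ua}x={\ua}\vee D\cap{\ua}x\subseteq U$, since any point of $A$ above both $\vee D$ and $x$ would put $\vee D$ in $E$. The strong $d$ condition then gives ${\ua}d\cap{\ua}x\subseteq U$ for some $d$, a contradiction. So $\vee D\in E$ and $E$ is Scott closed.

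For the direction (2)$\Rightarrow$(1), suppose $\bigcap_d{\ua}d\cap{\ua}x={\ua}\vee D\cap{\ua}x\subseteq U$ while each ${\ua}d\cap{\ua}x\not\subseteq U$. Let $A=P\setminus U$, which is Scott closed. Then each $d$ lies in $E={\da}({\ua}x\cap A)$, and $E$ is Scott closed by (2). So $\vee D\in E$, which gives $a\in A$ with $a\geq\vee D$ and $a\geq x$, so $a\in{\ua}\vee D\cap{\ua}x\subseteq U$, a contradiction.

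The main obstacle is the dcpo step in (1)$\Rightarrow$(2), handled by the choice $U=P\setminus\bigcap_{b}{\da}b$ above.
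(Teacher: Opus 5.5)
Your argument is correct. The paper gives no proof of this lemma (it is quoted from Xu--Zhao), and your treatment of the closure condition and of (2)$\Rightarrow$(1) is the standard one. For the dcpo property you argue directly with the Scott closed set $\bigcap_{b\in B}\da b$, tested against $x=d_0\in D$, instead of invoking the general fact that every strong $d$-space is a $d$-space; this is the same idea specialized to $\Sigma P$.

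In the write-up, delete the abandoned attempts and the dangling sentence about $x\in B$. Also, in your $B=\emptyset$ aside, the contradiction comes from directedness (for any $d,x\in D$ the set $\ua d\cap\ua x$ contains a member of $D$), not from taking $d=x$. That case is anyway already covered by your final argument, since $B=\emptyset$ gives $\bigcap_{b\in B}\da b=P$ and $U=\emptyset$.
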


 As we all know, the property of being a $d$-space is strictly weaker than the well-filteredness (see, e.g., \cite[Exercise 8.3.9]{Goubault-2013} or \cite[Example 3.1]{Lu-Li-2017}). In contrast with it, Xu \cite{Xu-2026} gave the following important and unexpected result.

\begin{theorem}\label{theor-Scott-strongly-WF-strong-d-space} (\cite[Theorem 4.10]{Xu-2026}) For a poset $P$, the following two conditions are equivalent:
 \begin{enumerate}[\rm (1)]
 \item $\Sigma~\!\!P$ is a strong $d$-space.
 \item $\Sigma~\!\!P$ is strongly well-filtered.
 \end{enumerate}
 \end{theorem}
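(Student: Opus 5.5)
The implication (2)$\Rightarrow$(1) is immediate from Proposition \ref{prop-strongly-WF-is-WF}(2): every strongly well-filtered space is a strong $d$-space. So all the work is in (1)$\Rightarrow$(2). For that direction I will assume $\Sigma~\!\!P$ is a strong $d$-space. By Lemma \ref{lem-Scott-sd-space-charac} this means $P$ is a dcpo and $\da(\ua x\cap A)\in\Gamma(\Sigma~\!\!P)$ for every $x\in P$ and $A\in\Gamma(\Sigma~\!\!P)$. Now take a filtered family $\{K_d : d\in D\}\subseteq Q(\Sigma~\!\!P)$, a set $K\in Q(\Sigma~\!\!P)$ and $U\in\sigma(P)$ with $\bigcap_{d\in D}K_d\cap K\subseteq U$. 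Suppose, for contradiction, that $K_d\cap K\cap A\neq\emptyset$ for all $d$, where $A=P\setminus U$. The plan is to turn the pair ``$K$ together with the closed set $A$'' into a single Scott closed set $B\subseteq \da(K\cap A)$ that still meets every $K_d$. After that, a Rudin-type argument should yield a directed set whose supremum lies in $\bigcap_d K_d\cap K\cap A$, which is the desired contradiction.

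\textbf{Step 1: $B=\da(K\cap A)$ is Scott closed.} It is a lower set, so I must check closure under directed suprema. Take a directed $E\subseteq B$ with $s=\vee E$. For each $e\in E$ the set $\da(\ua e\cap A)$ is Scott closed by the characterization, and its intersection with $K$ is nonempty. I intend to use the compactness of $K$ to get a single $k\in K\cap A$ that lies above every element of $E$. The relevant family is $\{\da(\ua e\cap A)\cap K : e\in E\}$, which is decreasing in $e$. Since its members are closed in the compact subspace $K$ and have the finite intersection property, their intersection is nonempty. Hence some $k\in K$ satisfies $k\ge a_e\ge e$ with $a_e\in A$ for every $e$, so $k\ge s$. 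I still have to show that $k$ can be taken in $A$, or else that $s\in \da(K\cap A)$ directly. I would try to do this by choosing $k$ minimal in $K$ and using that $A$ is a lower set containing the $a_e\le k$. This refinement is the point I expect to be delicate.

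\textbf{Step 2: extract a point in the intersection.} The set $B$ is closed and meets every $K_d$, since $K_d\cap K\cap A\subseteq K_d\cap B$. By Zorn's lemma together with the compactness of the $K_d$ (the topological Rudin lemma), there is a minimal closed set $C\subseteq B$ meeting all $K_d$. In a Scott space, minimality together with the strong $d$-condition should let me build a directed subset of $C\cap\bigcup_d K_d$ whose supremum $c$ lies in every $K_d$; this uses well-filteredness in the form of the $d$-space property, as $K_d$ is Scott compact saturated. Then $c\in C\subseteq \da(K\cap A)$, and applying Step 1's compactness argument once more to $\ua c$ gives a point of $\bigcap_d K_d\cap K\cap A$. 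Such a point lies in $\bigcap_d K_d\cap K$ but not in $U$, contradicting the hypothesis.

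\textbf{Main obstacle.} The crux is keeping the point inside $K$ and $A$ simultaneously. I expect this to require applying the closedness of $\da(\ua x\cap A)$ repeatedly, with $x$ running over minimal points of the $K_d$ or of $K$, rather than a naive compactness argument. If the approach above stalls, I would instead apply the topological Rudin lemma directly to the closed set $A$ and the filtered family $\{K_d\}$ restricted to $\da K$. I would then show that the minimal irreducible closed set obtained there is the Scott closure of a directed set, using the strong $d$-property to stay below $K$.
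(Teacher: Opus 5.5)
The paper itself does not prove this theorem; it quotes it from Xu's work. So I judge your outline on its own merits. The direction (2)$\Rightarrow$(1) is correct. The overall architecture for (1)$\Rightarrow$(2) is also sound: make $B=\da(K\cap A)$ Scott closed, take a Rudin-minimal closed $C\subseteq B$ meeting every $K_d$, then extract a point. There is, however, a genuine gap in each of the two steps.

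\textbf{Step 1 has the key inequality reversed.} The statement $k\in\da(\ua e\cap A)$ means $k\le a_e$ and $e\le a_e$ for some $a_e\in A$. It does not mean $k\ge a_e\ge e$.
\begin{itemize}
\item Consequently $k\in A$ is automatic, since $A$ is a lower set.
\item On the other hand, $k\ge s$ does not follow at all, and choosing $k$ minimal in $K$ does not help.
\item The missing move is one more application of the strong $d$-property, at the point $k$. For every $e\in E$ you have $\ua k\cap\ua e\cap A\neq\emptyset$, and $\bigcap_{e\in E}\ua e=\ua s$. The definition of a strong $d$-space, applied with the open set $P\setminus A$, then yields some $a\in\ua s\cap\ua k\cap A$.
\item This $a$ lies in $K$ because $K$ is an upper set, so $s\in\da(K\cap A)$.
\end{itemize}
This gives the lemma you need: $\da(K\cap A)\in\Gamma(\Sigma P)$ for all $K\in Q(\Sigma P)$ and $A\in\Gamma(\Sigma P)$.

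\textbf{Step 2 is where the real content lies, and it is not supplied.} The Scott space of a dcpo need not be well-filtered, so there is no ``well-filteredness in the form of the $d$-space property'' to invoke. The strong $d$-property has to be used through the minimality of $C$, twice.
\begin{itemize}
\item First minimality argument: for each $d$, the set $\da(K_d\cap C)$ is Scott closed by the corrected Step 1 and is contained in $C$. It meets every $K_{d'}$: take $K_{d''}\subseteq K_d\cap K_{d'}$. Minimality then gives $C=\da(K_d\cap C)$.
\item Second minimality argument: for each $c\in C$, the first argument shows $\ua c\cap C$ meets every $K_d$. So the Scott closed set $\da(\ua c\cap C)$ (closed by Lemma~\ref{lem-Scott-sd-space-charac}) also equals $C$.
\item The second equality says any two elements of $C$ have an upper bound in $C$. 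Hence $C$ is a directed Scott closed set, and $C=\da c^*$ with $c^*=\vee C\in C$.
\item Then $c^*\in K_d$ for all $d$, since each $K_d$ is an upper set meeting $\da c^*$. Also $c^*\le a$ for some $a\in K\cap A$, so $a\in(\bigcap_{d\in D}K_d\cap K)\setminus U$. This is the desired contradiction.
\end{itemize}
No further compactness argument on $\ua c$ is needed at the end.
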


\begin{proposition}\label{prop-Noetherian-Scott-space-strongly-WF} For a Noetherian poset $P$, $\Sigma~\!\!P$ is strongly well-filtered.
 \end{proposition}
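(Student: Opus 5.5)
By Proposition \ref{prop-Alexandroff-topology-sober}, since $P$ is Noetherian, $P$ is a dcpo and $\sigma(P)=\alpha(P)$, so $\Sigma~\!\!P=(P,\alpha(P))$. Hence it suffices to prove that $(P,\alpha(P))$ is strongly well-filtered. I will check Definition \ref{def-strong-WF} directly, using Lemma \ref{lem-compact-saturated-in-Alexanderoff-topologyin} to write compact saturated sets as finitely generated upper sets. (An alternative route is Theorem \ref{theor-Scott-strongly-WF-strong-d-space} together with Lemma \ref{lem-Scott-sd-space-charac}. There one notes that every directed set has a largest element, so the strong $d$-space condition is trivial: take $d$ to be the top of $D$. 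This gives a one-line proof, and I would in fact present that version. The direct argument below is a check.)

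For the direct argument, let $\{K_d : d\in D\}\subseteq Q(X)$ be filtered, and let $K\in Q(X)$ and $U\in\alpha(P)$ with $\bigcap_{d\in D}K_d\cap K\subseteq U$. By Lemma \ref{lem-compact-saturated-in-Alexanderoff-topologyin}, each $K_d={\ua}F_d$ and $K={\ua}F$ with $F_d,F$ finite. Since open sets are upper sets, it suffices to find $d$ with $K_d\cap K\subseteq U$. Suppose no such $d$ exists. Then for each $d$ we can pick $x_d\in K_d\cap K\setminus U$.

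The key step is to extract a point of $\bigcap_d K_d\cap K$ outside $U$. I expect this to be the main obstacle, and I would handle it with the ACC. The sets $\min(K_d)=\min F_d$ are finite antichains. Because the family is filtered, I can work in the Smyth order, where $\{K_d\}$ is directed in $Q(P)$.

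I would first try the following. Consider $A_d=\da\bigl((K_d\cap K)\setminus U\bigr)$, a nonempty lower set that is closed in $\alpha(P)$. These sets decrease along the filter. Since $K_d\cap K$ is itself finitely generated by elements of the form $a\vee b$ minimal, one uses Noetherianity to get stabilization. Concretely, the elements $x_d$ can be chosen maximal in $P\setminus U$ above a minimal element of the finite set $F$. Some single $f\in F$ then works for cofinally many $d$, and any $x$ with $f\le x\notin U$ lies in the closed set $P\setminus U$.

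This direct route is messier, so the proof I would actually write is the short one via Theorem \ref{theor-Scott-strongly-WF-strong-d-space}. Let $D\in\mathcal D(P)$, $x\in P$ and $U\in\sigma(P)$ with $\bigcap_{d\in D}{\ua}d\cap{\ua}x\subseteq U$. Since $P$ is Noetherian, $D$ has a largest element $d_0$. Then $\bigcap_{d\in D}{\ua}d={\ua}d_0$, so ${\ua}d_0\cap{\ua}x\subseteq U$. This shows $\Sigma~\!\!P$ is a strong $d$-space, and hence, by Theorem \ref{theor-Scott-strongly-WF-strong-d-space}, it is strongly well-filtered.
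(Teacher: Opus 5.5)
Your short argument is correct and follows the paper's route: show that $\Sigma P$ is a strong $d$-space, then apply Theorem \ref{theor-Scott-strongly-WF-strong-d-space}. The only difference is that you verify the strong $d$-space condition directly, using the largest element $d_0$ of $D$ to get $\bigcap_{d\in D}{\ua}d={\ua}d_0$, so you need neither $\alpha(P)=\sigma(P)$ nor Lemma \ref{lem-Scott-sd-space-charac}, which is what the paper uses. Do discard the ``direct'' sketch rather than offering it as a check. Its claim that $K_d\cap K$ is finitely generated fails for Noetherian posets: in Example \ref{exam-strongly-WF-is-not-$R$-space}, ${\ua}a_1\cap{\ua}a_2=\{\omega_m : m\geq 2\}\cup\{\top\}$ has infinitely many minimal elements and $a_1\vee a_2$ does not exist. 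The sketch also never actually produces a point of $\bigcap_{d\in D}K_d\cap K$ outside $U$.
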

\begin{proof} By Proposition \ref{prop-Alexandroff-topology-sober}, $P$ is a dcpo and $\alpha(P)=\sigma(P)$, whence $\Gamma (\Sigma~\!\!P)=\mathbf{down}~\!P$ and $Q(\Sigma~\!\!P)=\mathbf{Fin}~\!P$ by Lemma \ref{lem-compact-saturated-in-Alexanderoff-topologyin}. Let $x\in P$ and $A\in \Gamma(\Sigma~\!\!P)$. Then $\da (\ua x\cap A)\in \mathbf{down}~\!P=\Gamma (\Sigma~\!\!P)$. Hence by Lemma \ref{lem-Scott-sd-space-charac} $\Sigma~\!\!P$ is a strong $d$-space, and consequently, it is strongly well-filtered by Theorem \ref{theor-Scott-strongly-WF-strong-d-space}.
\end{proof}

\section{Strong $R$-spaces and de Groot dual topology}

\begin{definition}\label{def-$R$-space} A $T_0$-space $X$ is said to have \emph{property R} if for any nonempty family $\{\mathord{{\uparrow}}F_{i}:i\in I\}\subseteq \mathbf{Fin}~\!\! X$ and $U\in \mathcal O(X)$, $\bigcap_{i\in I}\mathord{{\uparrow}}F_{i}\subseteq U$ implies  $\bigcap_{i\in J}\mathord{{\uparrow}}F_{i}\subseteq U$ for some $J\in I^{(<\omega)}$.
\end{definition}

The property $R$, which is strictly stronger than the property of being a strong $d$-space, was first introduced in \cite[Definition 10.2.11]{Xu-2016-2} (see also \cite[Definition 2.2]{Wen-Xu-2018}). Lawson and Xu \cite{Lawson-Xu-2024-2} called a $T_0$-space $X$ that satisfies property $R$ an \emph{R}-\emph{space}. The $R$-spaces were systematically studied in \cite{Lawson-Xu-2024-2, Xu-2016-2}.

As a strengthened version of $R$-spaces, Xu et al. \cite{Xu-Yang-Chen-2026} introduced the following notion.

\begin{definition}\label{def-strong-$R$-space}(\cite[Definition 4.2]{Xu-Yang-Chen-2026}) A $T_0$-space $X$ is called a \emph{strong} \emph{R}-\emph{space} if for any nonempty family $\{K_{i} : i\in I\}\subseteq Q(X)$ and any $U\in \mathcal O(X)$, $\bigcap_{i\in I}K_i\subseteq U$ implies $\bigcap_{i\in J}K_i\subseteq U$ for some $J\in I^{(<\omega)}$. The category of strong $R$-spaces and continuous mappings is denoted by $\mathbf{S}$-$\mathbf{Top}_r$.
\end{definition}

The essential distinction between strong $R$-spaces and $R$-spaces is that the definition of an $R$-space uses the system of upper sets generated by nonempty finite sets, while that of a strong $R$-space uses the system of nonempty compact saturated subsets. Clearly, every strong $R$-space is an $R$-space and the Sierpi\'{n}ski space $\Sigma 2$ is a strong $R$-space but not $T_1$.

\begin{proposition}\label{prop-strong-R-is-strong-WF} (\cite[Proposition 4.3]{Xu-Yang-Chen-2026}) \noindent \emph{(1)} Every strong $R$-space is strongly well-filtered.

\noindent \emph{(2)} Every coherent well-filtered space is a strong $R$-space.

\noindent \emph{(3)} Every $T_2$-space is a coherent sober space and hence a strong $R$-space.

\noindent \emph{(4)} Every $T_1$-space is an $R$-space.
\end{proposition}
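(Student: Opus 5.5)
We prove the four parts in turn, using only the definitions and the results of Sections 2 and 3.

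\emph{Part (1).} Let $\{K_d : d\in D\}\subseteq Q(X)$ be filtered, let $K\in Q(X)$ and $U\in\mathcal O(X)$, and suppose $\bigcap_{d\in D}K_d\cap K\subseteq U$. Consider the nonempty family $\{K_d : d\in D\}\cup\{K\}\subseteq Q(X)$. Since $X$ is a strong $R$-space, there is a finite $J\subseteq D$ with either $\bigcap_{d\in J}K_d\cap K\subseteq U$ or $\bigcap_{d\in J}K_d\subseteq U$; in both cases $\bigcap_{d\in J}K_d\cap K\subseteq U$. If $J=\emptyset$, then $K\subseteq U$ and any $K_d\cap K\subseteq U$ works. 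Otherwise, filteredness gives $d_0\in D$ with $K_{d_0}\subseteq K_d$ for all $d\in J$, so $K_{d_0}\cap K\subseteq U$. Hence $X$ is strongly well-filtered.

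\emph{Part (2).} Let $X$ be coherent and well-filtered, let $\{K_i : i\in I\}\subseteq Q(X)$ be nonempty, and suppose $\bigcap_{i\in I}K_i\subseteq U$.
\begin{itemize}
\item[(a)] By coherence and induction, every finite intersection $K_J=\bigcap_{i\in J}K_i$ with $J\in I^{(<\omega)}$ is compact and saturated.
\item[(b)] If some $K_J=\emptyset$, then $K_J\subseteq U$ and we are done.
\item[(c)] Otherwise $\{K_J : J\in I^{(<\omega)}\}\subseteq Q(X)$, and it is filtered because $K_{J\cup J'}\subseteq K_J\cap K_{J'}$.
\item[(d)] Its intersection is $\bigcap_{i\in I}K_i\subseteq U$, so well-filteredness yields some $K_J\subseteq U$.
\end{itemize}
The only delicate point is (b)--(c): well-filteredness is stated for families in $Q(X)$, which excludes the empty set, so the case of an empty finite intersection must be handled separately. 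Handling it directly as in (b) settles it.

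\emph{Part (3).} Every $T_2$-space is coherent and sober by Proposition \ref{prop-strongly-WF-is-WF}(4), since compact sets are closed and the intersection of a compact set with a closed set is compact. Sober spaces are well-filtered (this also follows from Theorem \ref{theor-Schalk-Heckman-Keimel-theorem}, because filtered families are irreducible). Part (2) then applies.

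\emph{Part (4).} In a $T_1$-space the specialization order is discrete, so ${\uparrow}F_i=F_i$ for each finite set $F_i$. Fix $i_0\in I$. We have $\bigcap_{i\in I}F_i\subseteq F_{i_0}$, and the finite set $F_{i_0}\setminus U$ is contained in $\bigcap_{i\in I}F_i\setminus U=\emptyset$ only after excluding its points one at a time. For each $x\in F_{i_0}\setminus U$, we have $x\notin\bigcap_{i\in I} F_i$, so we can choose $i_x\in I$ with $x\notin F_{i_x}$. Put $J=\{i_0\}\cup\{i_x : x\in F_{i_0}\setminus U\}$. Then $J$ is finite and $\bigcap_{i\in J}F_i\subseteq U$, so $X$ is an $R$-space.
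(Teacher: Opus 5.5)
Your proof is correct. The paper gives no argument of its own here (it only cites \cite[Proposition 4.3]{Xu-Yang-Chen-2026}), and your four arguments are the standard ones: adjoining $K$ to the filtered family in (1), closing under finite intersections by coherence and then applying well-filteredness (with the empty-intersection case split off) in (2), reducing (3) to (2) via sober $\Rightarrow$ well-filtered, and the finite-cover argument in (4), which is the same device the paper uses in Case 1 of its verification that $\Sigma Q$ is a strong $R$-space.

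The only blemish is the garbled sentence in (4) claiming that $F_{i_0}\setminus U$ is contained in $\bigcap_{i\in I}F_i\setminus U=\emptyset$ ``only after excluding its points one at a time''. As written this is false, and you should delete it; your subsequent choice of the indices $i_x$ already does all the work.
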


The following example shows that a strongly well-filtered space may not be an $R$-space. It also shows that for a Noetherian dcpo $P$, $(P, \alpha(P))$ may not be an $R$-space (comparing it with Proposition \ref{prop-Noetherian-Scott-space-strongly-WF}).

\begin{example}\label{exam-strongly-WF-is-not-$R$-space} Let $P=\{a_n : n\in \mathbb{N}^+\}\cup\{\omega_n : n\in\mathbb{N}^+\}\cup\{\top\}$  (see Figure 1) with the order generated by
\begin{enumerate}[\rm (1)]
            \item $\omega_n<\top$ for all $n\in\mathbb{N}^+$; and
            \item $a_n<\omega_m$ for all $n, m\in \mathbb{N}$ with $n\leq m$.
\end{enumerate}

\begin{figure}[ht]
	\centering
	\includegraphics[height=4.5cm,width=10.5cm]{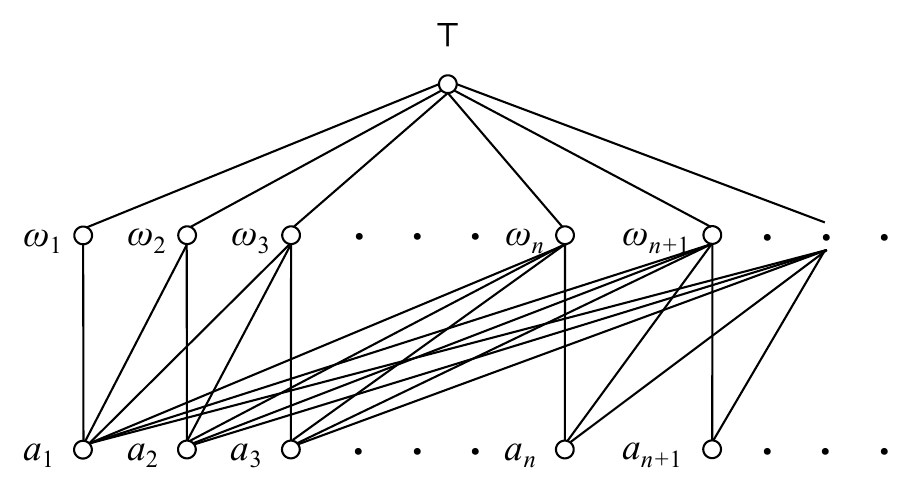}
	\caption{The Noetherian dcpo $P$ in Example \ref{exam-strongly-WF-is-not-$R$-space}}
\end{figure}

Considering the Scott topology $\sigma(P)$, we have the following conclusions:
\begin{enumerate}[\rm (a)]
\item $P$ is Noetherian and hence it is a dcpo and $\sigma(P)=\alpha(P)$ by Proposition \ref{prop-Alexandroff-topology-sober}.
\item $\Gamma (\Sigma~\!\!P)=\mathbf{down}~\!P$ and $Q(\Sigma~\!\!P)=\mathbf{Fin}~\!P$ by Lemma \ref{lem-compact-saturated-in-Alexanderoff-topologyin}.
    \item $\Sigma~\!\!P$ is sober by Proposition \ref{prop-Alexandroff-topology-sober}.
\item $\Sigma~\!\!P$ is strongly well-filtered by $\sigma(P)=\alpha(P)$ and Proposition \ref{prop-Noetherian-Scott-space-strongly-WF}.
\item $\Sigma~\!\!P$ is not an $R$-space and hence it is not a strong $R$-space.

Clearly, $\{{\uparrow}a_n : n\in\mathbb{N}^+\}\subseteq Q(\Sigma~\!\!P)$ and $\bigcap\limits_{n\in\mathbb{N}^+}{\uparrow}a_n=\{\top\}$, but for any $J\in (\mathbb{N}^+)^{(<\omega)}$, $\bigcap_{n\in J}{\uparrow}a_n\supseteq \{\top\}\cup\{\omega_m : m\geq max(J)\}\not\subseteq \{\top\}$. Therefore, $\Sigma~\!\!P$ is not an $R$-space, and hence it is not a strong $R$-space.
\end{enumerate}
\end{example}

A $T_1$-space (hence an $R$-space) may not be a strong $R$-space, as shown in the following example.

\begin{example}\label{exam-X-cof-T1-space-not-strong-$R$-space}
	Let $X$ be a countably infinite set and $X_{cof}$ the space equipped with the \emph{co-finite topology} (the empty set and the complements of finite subsets of $X$ are open). Then
\begin{enumerate}[\rm (a)]
    \item $X_{cof}$ is a $T_1$-space and hence an $R$-space.
    \item $Q(X_{cof})=2^X\setminus \{\emptyset\}$.
    \item $X_{cof}$ is locally compact and first-countable.
\item $X_{cof}$ is not well-filtered and hence it is not a strong $R$-space.

Let $\mathcal K=\{X\setminus F: F\in X^{(<\omega)}\}$. Then $\mathcal K$ is a filtered family of saturated compact subsets of $X_{cof}$ and $\bigcap \mathcal K=\emptyset$, but $X\setminus F\neq\emptyset$ for every $ F\in X^{(<\omega)}$. Therefore, $X_{cof}$ is not well-filtered, and hence it is not a strong $R$-space.
\end{enumerate}
\end{example}

Figure 2 taken from \cite{Xu-Yang-Chen-2026} shows certain relations of some spaces lying between $d$-spaces and $T_2$-spaces.

\begin{figure}[ht]
	\centering
	\includegraphics[height=4.5cm,width=12.0cm]{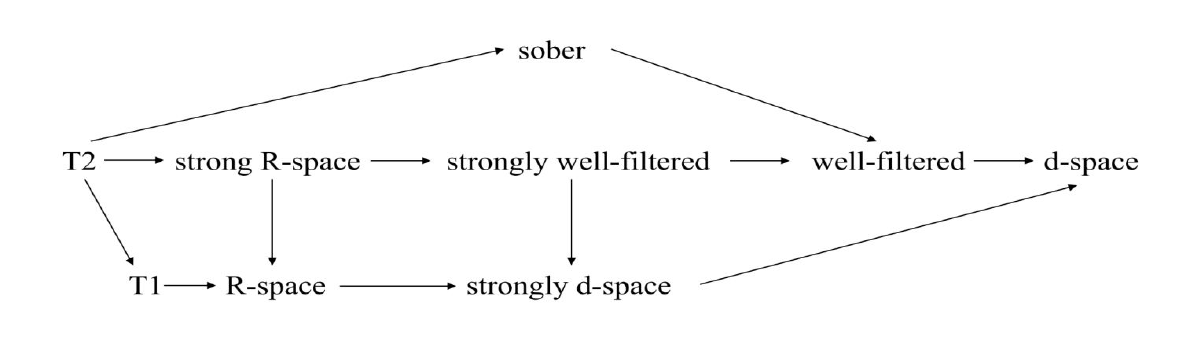}
	\caption{Relations of some spaces lying between $d$-spaces and $T_2$-spaces}
\end{figure}

By Proposition \ref{prop-Scott-topology-on-complete-lattice-WF}, Proposition \ref{prop-Scott-WF-coherent} and Proposition \ref{prop-strong-R-is-strong-WF}(2), we have the following result.

\begin{proposition}\label{prop-complete-lattice-Scott-space-strong-$R$-space} For a complete lattice $L$, its Scott space $\Sigma~\!\!L$ is a strong $R$-space.
\end{proposition}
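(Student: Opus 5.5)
The statement follows by chaining three earlier results, each of which applies directly to the Scott space of a complete lattice. The plan is to show that $\Sigma~\!\!L$ is a coherent well-filtered space and then apply Proposition \ref{prop-strong-R-is-strong-WF}(2).

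First, since $L$ is a complete lattice, Proposition \ref{prop-Scott-topology-on-complete-lattice-WF} gives that $(L, \sigma(L))$ is well-filtered. Next, $L$ is in particular a dcpo, so Proposition \ref{prop-Scott-WF-coherent} applies to it. By that proposition, coherence of $\Sigma~\!\!L$ reduces to compactness of ${\ua}x\cap{\ua}y$ for all $x, y\in L$. In a complete lattice, ${\ua}x\cap{\ua}y={\ua}(x\vee y)$, and this is a principal filter. It is compact in $\Sigma~\!\!L$, because any open cover of it has a single member containing $x\vee y$, and that open set, being an upper set, covers all of ${\ua}(x\vee y)$. The second clause of Proposition \ref{prop-Scott-WF-coherent} already records this conclusion, so $\Sigma~\!\!L$ is coherent.

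Finally, $\Sigma~\!\!L$ is now known to be a coherent well-filtered $T_0$-space, so Proposition \ref{prop-strong-R-is-strong-WF}(2) yields that $\Sigma~\!\!L$ is a strong $R$-space. There is no real obstacle here: the only substantive ingredient is the well-filteredness of Scott spaces of complete lattices, which is imported from \cite{Xi-Lawson-2017}.
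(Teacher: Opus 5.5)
Your proof is correct and is the paper's argument: well-filteredness from Proposition \ref{prop-Scott-topology-on-complete-lattice-WF}, coherence from Proposition \ref{prop-Scott-WF-coherent}, and then Proposition \ref{prop-strong-R-is-strong-WF}(2). Your check that ${\ua}x\cap{\ua}y={\ua}(x\vee y)$ is compact in $\Sigma~\!\!L$ is correct but optional, since the second clause of Proposition \ref{prop-Scott-WF-coherent} already gives coherence.
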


\begin{proposition}\label{prop-compact-bigcap} (\cite[Theorem 4.5 and Proposition 4.7]{Xu-Yang-Chen-2026})
 For a sup-complete poset $P$, $(P, \upsilon(P))$ is a strong $R$-space and coherent. Hence $(P, \upsilon(P))$ is strongly well-filtered.
\end{proposition}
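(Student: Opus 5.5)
The statement to be proved is Proposition~\ref{prop-compact-bigcap}: for a sup-complete poset $P$, the space $(P,\upsilon(P))$ is coherent and a strong $R$-space, and therefore strongly well-filtered. Write $X=(P,\upsilon(P))$. The plan is to show that $X$ is coherent and well-filtered, and then read off the rest from results already stated.

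\textbf{Step 1: sobriety.} Since $P$ is sup-complete, Proposition~\ref{prop-upper-topology-WF} gives that $X$ is sober. Sober spaces are well-filtered, so $X$ is well-filtered.

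\textbf{Step 2: coherence.} This is the main point. I would rely on Alexander's subbase lemma.

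First, describe the compact saturated sets of $X$. The saturated sets are exactly the upper sets in the order of $P$. The closed sets are generated by the sets $\da x$.

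\emph{Key claim:} a nonempty upper set $K$ is compact iff it is closed in the topology whose closed subbase consists of the sets $\ua y$, i.e.\ iff $K=\bigcup_{\text{finite}}\bigcap \ua(\cdots)$-type sets. I expect this claim to be delicate. I would avoid proving it in full and instead prove directly that $K_1\cap K_2$ is compact.

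\emph{Reduction.} Take a subbasic open cover $\{P\setminus\da x_i : i\in I\}$ of $K_1\cap K_2$. This means that no point of $K_1\cap K_2$ lies below all $x_i$, i.e.\ $K_1\cap K_2\cap\bigcap_i \da x_i=\emptyset$.

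\emph{Finding a finite subcover, first attempt.} Suppose, for contradiction, that every finite subfamily $J\subseteq I$ leaves some point $z_J\in K_1\cap K_2\cap\bigcap_{i\in J}\da x_i$. Then use sup-completeness: for each $i$ and each finite $J$, the element $z_J$ is a lower bound of $\{x_i : i\in J\}$. I want a single point below all $x_i$ that lies in $K_1\cap K_2$. Lower bounds do not obviously have infima in $P$. However, $\{z\in P: z\le x_i \ \forall i\in J\}\cap K_1$ is nonempty for each finite $J$. By compactness of $K_1$ (again via the subbase lemma applied to the cover $\{P\setminus \da x_i\}$), this yields some $z_1\in K_1$ below all $x_i$; similarly some $z_2\in K_2$ below all $x_i$.

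\emph{Finishing.} Now take $z=z_1\vee z_2$, which exists since $P$ is sup-complete. It lies in $K_1\cap K_2$ because both sets are upper sets. The remaining issue is whether $z\le x_i$ for all $i$: this holds, since $z_1$ and $z_2$ are both lower bounds of $\{x_i\}$, so their join is below each $x_i$. Hence $z\in K_1\cap K_2\cap\bigcap_i\da x_i$, a contradiction. Therefore some finite subfamily covers $K_1\cap K_2$, and Alexander's lemma gives compactness of $K_1\cap K_2$. So $X$ is coherent.

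\textbf{Step 3: conclusion.} By Steps 1 and 2, $X$ is coherent and well-filtered. Proposition~\ref{prop-strong-R-is-strong-WF}(2) then makes $X$ a strong $R$-space, and Proposition~\ref{prop-strong-R-is-strong-WF}(1) (or Proposition~\ref{prop-strongly-WF-is-WF}(3)) makes it strongly well-filtered.

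The main obstacle is Step 2. The care needed there is in applying compactness of each $K_j$ to a cover that fails to have a finite subcover, and in checking that the join of the two witnesses $z_1,z_2$ stays below every $x_i$.
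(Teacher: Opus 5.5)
Your argument is correct. The paper gives no proof of its own here. It only quotes Theorem 4.5 and Proposition 4.7 of the cited source, where the strong $R$ property and coherence appear as two separate results.

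Your route differs in that coherence is the only new ingredient:
\begin{enumerate}
\item Sobriety, and hence well-filteredness, comes from Proposition~\ref{prop-upper-topology-WF}.
\item Coherence comes from Alexander's subbase lemma together with binary joins.
\item The strong $R$ property and strong well-filteredness then follow from Proposition~\ref{prop-strong-R-is-strong-WF}(2) and (1).
\end{enumerate}
This makes the proposition self-contained relative to results already stated in the paper. It also shows that coherence needs only binary joins; full sup-completeness is used only for sobriety.

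Two tidying remarks. First, your ``key claim'' describing compact saturated sets via a topology generated by the sets $\ua y$ is neither proved nor used, so delete it. Second, the contradiction through the points $z_J$ and the finite intersection property can be shortened. If $\{P\setminus \da x_i : i\in I\}$ covers $K_1\cap K_2$, it must already cover $K_1$ or $K_2$. Otherwise pick $z_j\in K_j\cap\bigcap_{i\in I}\da x_i$ for $j=1,2$; then $z_1\vee z_2$ lies in $K_1\cap K_2\cap\bigcap_{i\in I}\da x_i$, which is impossible. Compactness of that $K_j$ then gives the finite subcover of $K_1\cap K_2$ directly.
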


\begin{corollary}\label{cor-Hower-swf} (\cite[Theorem 4.6]{Xu-Yang-Chen-2026})
For a $T_0$-space $X$, its Hoare power space $P_H(X)$ is a strong $R$-space and coherent. Hence $P_H(X)$  is strongly well-filtered.
\end{corollary}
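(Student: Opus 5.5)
The corollary follows by combining Proposition \ref{prop-compact-bigcap} with the identification of the Hoare power space recorded in Section 3. As observed just before Proposition \ref{prop-Hoare-is-sober}, for a $T_0$-space $X$ we have
$$P_H(X)=(\Gamma(X)\setminus\{\emptyset\},\ \upsilon(\Gamma(X)\setminus\{\emptyset\})),$$
where $\Gamma(X)\setminus\{\emptyset\}$ is ordered by inclusion. I would briefly recheck this identification. The order is inclusion, and $\{A: A\not\subseteq B\}=\Diamond(X\setminus B)$ for $B$ a nonempty closed set. Moreover, $\Diamond U$ is the complement of ${\downarrow}(X\setminus U)$ when $X\setminus U\neq\emptyset$, and it is the whole space when $U=X$. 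So the subbasic open sets coincide with those of the upper topology.

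It then suffices to show that $P=\Gamma(X)\setminus\{\emptyset\}$ is sup-complete. Take a nonempty family $\{A_i: i\in I\}\subseteq P$. Its supremum is $\cl(\bigcup_{i\in I}A_i)$, which is closed, nonempty (since $I\neq\emptyset$ and each $A_i\neq\emptyset$), and is the least closed set containing every $A_i$. Hence every nonempty subset of $P$ has a supremum, so $P$ is sup-complete.

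Applying Proposition \ref{prop-compact-bigcap} to this sup-complete poset, $(P,\upsilon(P))=P_H(X)$ is a strong $R$-space and coherent. Strong well-filteredness follows either directly from Proposition \ref{prop-compact-bigcap} or from Proposition \ref{prop-strong-R-is-strong-WF}(1).

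No step is a real obstacle. The only point needing care is confirming that the lower Vietoris topology on closed sets equals the upper topology, and in particular treating the degenerate case $U=X$ correctly.
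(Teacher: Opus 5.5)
Your proposal is correct and follows exactly the route the paper intends: the corollary is read off from Proposition \ref{prop-compact-bigcap}, using the identification $P_H(X)=(\Gamma(X)\setminus\{\emptyset\},\upsilon(\Gamma(X)\setminus\{\emptyset\}))$ and the sup-completeness of $\Gamma(X)\setminus\{\emptyset\}$ (suprema are closures of unions), just as Proposition \ref{prop-Hoare-is-sober} was read off from Proposition \ref{prop-upper-topology-WF}. Your recheck of the identification, including the case $U=X$, is sound; note that it presupposes $P_H(X)$ carries the lower Vietoris topology on $\Gamma(X)\setminus\{\emptyset\}$, which is the intended meaning even though the preliminaries literally write $P_H(\ir_{c}(X))$ (on $\ir_{c}(X)$ one gets the sobrification, for which the statement fails).
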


The following result demonstrates a close relation between the property of being a strong $R$-space and the de Groot dual topology.

\begin{theorem}\label{theor-strong-$R$-space-charac}
For a $T_0$-space $(X, \tau)$, the following two conditions are equivalent:
 \begin{enumerate}[\rm (1)]
 \item $(X, \tau)$ is a strong $R$-space.
 \item $\Gamma((X, \tau))\setminus \{\emptyset\}\subseteq Q((X, \tau^{d}))$.
 \end{enumerate}
\end{theorem}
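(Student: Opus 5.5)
The plan is to translate compactness in $(X,\tau^d)$ into a statement about closed sets, using the fact that the family of nonempty compact saturated sets $Q((X,\tau))$, together with $\emptyset$, forms a basis of closed sets for $\tau^d$. Recall that a subset $A$ is compact in a topology with closed basis $\mathcal B$ iff every family of basic closed sets with the finite intersection property relative to $A$ meets $A$ in a point. This is the dual of the open-cover formulation, and basic closed sets suffice because every closed set is an intersection of basic ones. So condition (2) says exactly the following: for every nonempty $\tau$-closed set $A$ and every family $\{K_i : i\in I\}\subseteq Q((X,\tau))$ with $A\cap\bigcap_{i\in J}K_i\neq\emptyset$ for all $J\in I^{(<\omega)}$, we have $A\cap\bigcap_{i\in I}K_i\neq\emptyset$. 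The case $I=\emptyset$ is handled by $A\neq\emptyset$.

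For (1)$\Rightarrow$(2), I take $A$ and $\{K_i\}$ as above and suppose, for contradiction, that $A\cap\bigcap_{i\in I}K_i=\emptyset$. Then $\bigcap_{i\in I}K_i\subseteq U:=X\setminus A$, which is $\tau$-open. If $I\neq\emptyset$, the strong $R$-property gives a finite $J$ with $\bigcap_{i\in J}K_i\subseteq U$, that is, $A\cap\bigcap_{i\in J}K_i=\emptyset$, contradicting the finite intersection property. If $I=\emptyset$, the intersection is $A\neq\emptyset$, and there is nothing to prove. Hence $A$ is $\tau^d$-compact.

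For (2)$\Rightarrow$(1), I take a nonempty family $\{K_i : i\in I\}\subseteq Q((X,\tau))$ and $U\in\tau$ with $\bigcap_{i\in I}K_i\subseteq U$. If $U=X$, any singleton $J$ works. Otherwise $A=X\setminus U$ is a nonempty $\tau$-closed set, so it is $\tau^d$-compact by (2). Each $K_i$ is $\tau^d$-closed, and $A\cap\bigcap_{i\in I}K_i=\emptyset$. Compactness of $A$ therefore yields a finite $J\subseteq I$ with $A\cap\bigcap_{i\in J}K_i=\emptyset$, since the complements $X\setminus K_i$ form a $\tau^d$-open cover of $A$. If this $J$ is empty, then $A=\emptyset$, which is impossible, so $J\in I^{(<\omega)}$ and $\bigcap_{i\in J}K_i\subseteq U$.

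The argument is essentially bookkeeping. The only points needing care are the reduction of $\tau^d$-compactness to covers by complements of basic closed sets (elements of $Q((X,\tau))$) and the edge cases: empty $A$, empty $J$, and $U=X$. These edge cases are exactly why (2) is restricted to nonempty closed sets and why the definition of strong $R$-space uses nonempty families. I expect no serious obstacle beyond checking that the closed basis for $\tau^d$ is indeed $Q((X,\tau))\cup\{\emptyset\}$, or that it is closed under finite unions up to generation. Subbasic closed sets already suffice, by the Alexander subbase lemma in its closed form, so this closedness is not actually needed.
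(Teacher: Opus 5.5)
Your argument is correct and is essentially the paper's proof. Both directions go through Alexander's subbase lemma for the closed subbase $Q((X,\tau))$ of $\tau^d$, with $X\setminus C$ as the $\tau$-open set to which the strong $R$-property is applied; your finite-intersection-property phrasing of (1)$\Rightarrow$(2) is just the dual of the paper's open-cover phrasing, and your treatment of the edge cases is slightly more careful. The one thing you leave out is that membership in $Q((X,\tau^{d}))$ also requires a nonempty $\tau$-closed set $C$ to be $\tau^d$-saturated, so your claim that (2) ``says exactly'' the compactness condition needs a justification. The paper supplies it: since ${\ua}y$ is compact saturated in $(X,\tau)$, we have $\cl_{\tau^d}\{y\}={\ua}y$, so the specialization order of $\tau^d$ is the dual of that of $\tau$, and every $\tau$-closed (hence lower) set is $\tau^d$-saturated.
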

\begin{proof} (1) $\Rightarrow$ (2): Suppose that $C$ is a nonempty closed set of $(X, \tau))$ and $\{K_{i}:i\in I\}\subseteq Q((X, \tau))$ satisfying $C\subseteq \bigcup_{i\in I}(X\setminus K_{i})$. Then $C$ is saturated in $(X, \tau^{d})$ (note the specialization order of $(X, \tau^{d})$ is the dual of specialization order of $(X, \tau)$) and $\bigcap_{i\in I}K_{i}\subseteq X\setminus C\in\tau$. As $(X, \tau)$ is a strong $R$-space, there is $J\in I^{(<\omega)}$ such that $\bigcap_{i\in J}K_{i}\subseteq X\setminus C$ or, equivalently, $C\subseteq \bigcup_{i\in I}(X\setminus K_{i})$. Therefore, $C$ is compact in $(X, \tau^d)$ by Alexander's Subbase Lemma (see, eg., \cite[Proposition I-3.22]{GHKLMS-2003}), and hence $C\in Q((X, \tau^{d}))$. Thus $\Gamma((X, \tau))\setminus \{\emptyset\}\subseteq Q((X, \tau^{d}))$.

(2) $\Rightarrow$ (1): Assume that $\{K_{i}:i\in I\}\subseteq Q((X, \tau))$ and $U\in\tau$ satisfying $\bigcap_{i\in I}K_{i}\subseteq U$. If $U=X$, then $\bigcap_{i\in I_0}K_{i}\subseteq U$ for any $I_0\in I^{(<\omega)}$. If $U\neq X$, then $C=X\setminus U$ is a nonempty closed set of $(X, \tau)$. By the hypothesis, $C\in \Gamma((X, \tau))\setminus \{\emptyset\}\subseteq Q((X, \tau^{d}))$ and $C\subseteq \bigcup_{i\in I}(X\setminus K_i)$. Therefore, there is $J\in I^{(<\omega)}$ such that $C\subseteq \bigcup_{i\in J} (X\setminus K_{i})$, or equivalently, $\bigcap_{i\in J}K_i\subseteq U$. Hence $X$ is a strong $R$-space.
\end{proof}

\begin{corollary}\label{cor-strong-$R$-space-tau-dd}
If $(X, \tau)$  is a strong $R$-space (especially, if $(X, \tau)$ is a coherent well-filtered space, or a $T_2$-space), then $\tau \subseteq \tau^{dd}$.
\end{corollary}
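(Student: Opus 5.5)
We want $\tau\subseteq\tau^{dd}$. By definition $\tau^{dd}$ is the topology whose closed sets are generated, as a basis for closed sets, by the members of $Q((X,\tau^d))$. So it suffices to show that every $\tau$-closed set is $\tau^{dd}$-closed. We take this route because $\tau^{dd}$-closedness is exactly what Theorem \ref{theor-strong-$R$-space-charac} hands us.

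Let $C\in\Gamma((X,\tau))$. If $C=\emptyset$, it is trivially $\tau^{dd}$-closed. If $C\neq\emptyset$, Theorem \ref{theor-strong-$R$-space-charac} (1)$\Rightarrow$(2) gives $C\in Q((X,\tau^d))$. Every nonempty compact saturated subset of $(X,\tau^d)$ is, by definition of the dual, a basic closed set of $\tau^{dd}$, so $C$ is $\tau^{dd}$-closed. Passing to complements, every $U\in\tau$ lies in $\tau^{dd}$, and hence $\tau\subseteq\tau^{dd}$.

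One point needs a brief check: the de Groot dual is defined for a $T_0$-topology, so we should confirm that $\tau^d$ is $T_0$. Since $\tau$ is $T_0$, for $x\neq y$ one of ${\uparrow}x,{\uparrow}y$ (in the $\tau$-specialization order) omits the other point. Each ${\uparrow}x$ is compact saturated in $(X,\tau)$, hence $\tau^d$-closed, and these sets separate points. Thus $\tau^d$ is $T_0$ and $\tau^{dd}$ makes sense.

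For the parenthetical cases, we invoke Proposition \ref{prop-strong-R-is-strong-WF}(2) and (3): coherent well-filtered spaces and $T_2$-spaces are strong $R$-spaces, so the inclusion applies to them. There is no real obstacle here; the only care needed is the empty-set case and the fact that basic closed sets of $\tau^{dd}$ are exactly the elements of $Q((X,\tau^d))$, with $X$ itself closed trivially.
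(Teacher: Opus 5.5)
Your proof is correct and is essentially the paper's own argument. The paper reads the corollary off from the implication (1)$\Rightarrow$(2) of Theorem \ref{theor-strong-$R$-space-charac}: nonempty $\tau$-closed sets lie in $Q((X,\tau^{d}))$ and are therefore $\tau^{dd}$-closed, with the parenthetical cases supplied by Proposition \ref{prop-strong-R-is-strong-WF}(2) and (3). Your separate handling of $C=\emptyset$ and your check that $\tau^{d}$ is $T_0$ are correct extras the paper leaves implicit.
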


\begin{theorem}\label{theor-locally-compact-dual-topology}
Let $(X, \tau)$ be a locally compact $T_0$-space. Then $Q((X,\tau^{d}))\subseteq \Gamma((X, \tau))\setminus \{\emptyset\}$, and hence $\tau^{dd}\subseteq \tau$.
\end{theorem}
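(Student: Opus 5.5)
We show that every nonempty $K\in Q((X,\tau^d))$ is $\tau$-closed, by proving that $X\setminus K$ is $\tau$-open. Since the specialization order of $(X,\tau^d)$ is the dual of that of $(X,\tau)$, a $\tau^d$-saturated set is a $\tau$-lower set. So it suffices to find, for each $x\notin K$, a $\tau$-open set around $x$ that misses $K$.

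Fix $x\in X\setminus K$. As $K$ is a $\tau$-lower set, no $k\in K$ satisfies $x\le k$, i.e.\ $k\notin{\ua}x$. For each $k\in K$ we have $k\notin \overline{\{\,\}}$-type information only in one direction, so we use local compactness as follows: since $x\not\le_\tau k$, we have $x\notin\cl_\tau\{k\}$, so there is $V\in\tau$ with $x\in V$ and $k\notin V$. By local compactness there is $Q_k\in Q((X,\tau))$ with $x\in\ii_\tau Q_k\subseteq Q_k\subseteq V$; hence $k\notin Q_k$. Thus $K\subseteq\bigcup_{k\in K}(X\setminus Q_k)$. Each $X\setminus Q_k$ is $\tau^d$-open, because $Q_k$ is compact saturated and therefore a basic $\tau^d$-closed set. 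By $\tau^d$-compactness of $K$ there are finitely many $k_1,\dots,k_n$ with $K\subseteq\bigcup_{j}(X\setminus Q_{k_j})$. Then $W=\bigcap_j\ii_\tau Q_{k_j}$ is a $\tau$-open neighbourhood of $x$ disjoint from $K$. Hence $X\setminus K\in\tau$, so $K\in\Gamma((X,\tau))\setminus\{\emptyset\}$.

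For $\tau^{dd}\subseteq\tau$, recall that the closed sets of $\tau^{dd}$ are generated by the members of $Q((X,\tau^d))$: finite unions and arbitrary intersections of them, together with $\emptyset$ and $X$. Each generator is $\tau$-closed by the first part, and $\emptyset$ and $X$ are $\tau$-closed. Since $\Gamma((X,\tau))$ is closed under finite unions and arbitrary intersections, every $\tau^{dd}$-closed set is $\tau$-closed, which gives $\tau^{dd}\subseteq\tau$.

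The main step is the first part, specifically separating each $k$ from $x$ by a $\tau$-compact neighbourhood of $x$. The key point is to use local compactness at $x$, not at $k$, so that the complements $X\setminus Q_k$ form a $\tau^d$-open cover of $K$. No separation axiom beyond $T_0$ is needed.
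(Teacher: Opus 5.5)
Your proof is correct and takes essentially the paper's approach: local compactness at $x\notin K$ yields compact saturated $\tau$-neighbourhoods of $x$ whose complements form a $\tau^d$-open cover of the $\tau$-lower set $K$, and $\tau^d$-compactness then gives a $\tau$-open neighbourhood of $x$ missing $K$. The differences are cosmetic: the paper extracts a single member from the filtered family of all compact saturated neighbourhoods of $x$ (whose intersection is ${\ua}x$), whereas you choose one neighbourhood per point of $K$ and intersect finitely many interiors, and you also spell out the routine deduction $\tau^{dd}\subseteq\tau$ that the paper leaves implicit.
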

\begin{proof} Suppose $A\in Q((X,\tau^{d}))$. Then $A={\downarrow_{\tau}} A$. If $A=X$, then $A\in \Gamma((X, \tau))\setminus \{\emptyset\}$. Now we assume $X\setminus A\neq\emptyset$. For $x\in X\setminus A$, as $X$ is locally compact, $\mathcal K_x=\{K\in Q((X, \tau)) : x\in \ii_{\tau} K\}$ is filtered and $\bigcap\mathcal K_x=\ua_{\tau} x\subseteq X\setminus A$, whence $A\subseteq \bigcup_{K\in \mathcal K_x}(X\setminus K)$. Hence $\{X\setminus K : K\in \mathcal K_x\}$ is an open cover of $A$ in $(X,\tau^{d})$. So by $A\in Q((X,\tau^{d}))$, there exists $K\in \mathcal K_x$ such that $A\subseteq X\setminus K$ (note that $\{X\setminus K : K\in \mathcal K_x\}$ is a direct family), and consequently, $x\in \ii_{\tau} K\subseteq K\subseteq X\setminus A$. Therefore, $X\setminus A\in \mathcal{O}(X)\setminus \{X\}$, or equivalently, $A\in \Gamma((X, \tau))\setminus \{\emptyset\}$. Thus $Q((X,\tau^{d}))\subseteq \Gamma((X, \tau))\setminus \{\emptyset\}$, and hence $\tau^{dd}\subseteq \tau$.
\end{proof}

From Theorem \ref{theor-strong-$R$-space-charac} and Theorem \ref{theor-locally-compact-dual-topology} we deduce the following.

\begin{theorem}\label{theor-locally-compact-strong-$R$-space}
Let $(X,\tau)$ be a locally compact strong $R$-space. Then $\Gamma((X, \tau))\setminus \{\emptyset\}=Q((X,\tau^{d}))$, and hence $\tau=\tau^{dd}$.
\end{theorem}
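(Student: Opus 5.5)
The statement follows by combining Theorem \ref{theor-strong-$R$-space-charac} and Theorem \ref{theor-locally-compact-dual-topology}; the only work is to turn an equality of families of sets into an equality of topologies.

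First, since $(X,\tau)$ is a strong $R$-space, Theorem \ref{theor-strong-$R$-space-charac} gives $\Gamma((X,\tau))\setminus\{\emptyset\}\subseteq Q((X,\tau^{d}))$. Second, since $(X,\tau)$ is locally compact, Theorem \ref{theor-locally-compact-dual-topology} gives the reverse inclusion $Q((X,\tau^{d}))\subseteq \Gamma((X,\tau))\setminus\{\emptyset\}$. Together these yield $\Gamma((X,\tau))\setminus\{\emptyset\}=Q((X,\tau^{d}))$.

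It remains to deduce $\tau=\tau^{dd}$. By definition, $\tau^{dd}$ is the topology whose closed sets are generated by $Q((X,\tau^d))$ as a basis for the closed sets. That is, its closed sets are the arbitrary intersections of finite unions of members of $Q((X,\tau^d))$, with $X$ and $\emptyset$ also closed. By the equality just proved, this generating family is exactly the family of nonempty $\tau$-closed sets. The $\tau$-closed sets are already closed under finite unions and arbitrary intersections, and they contain $\emptyset$. Hence the closed sets generated are precisely $\Gamma((X,\tau))$, and so $\tau^{dd}=\tau$.

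Alternatively, the same conclusion comes from combining the two inclusions $\tau\subseteq\tau^{dd}$ and $\tau^{dd}\subseteq\tau$. The first is Corollary \ref{cor-strong-$R$-space-tau-dd}; the second is the last clause of Theorem \ref{theor-locally-compact-dual-topology}.

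The only point needing a remark is the empty set. It is excluded on both sides of the equality, so it causes no mismatch, and it is closed in both $\tau$ and $\tau^{dd}$ by convention.
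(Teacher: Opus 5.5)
Your proposal is correct and follows the paper's route exactly. The paper obtains the equality by combining the inclusion $\Gamma((X,\tau))\setminus\{\emptyset\}\subseteq Q((X,\tau^{d}))$ from Theorem \ref{theor-strong-$R$-space-charac} with the reverse inclusion from Theorem \ref{theor-locally-compact-dual-topology}, and $\tau=\tau^{dd}$ then follows because $Q((X,\tau^{d}))$ generates the closed sets of $\tau^{dd}$; your remarks on the empty set and on closure under finite unions and intersections only spell out what the paper leaves implicit.
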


By Theorem \ref{theor-LC-sober=LC-wf=CC-sober}, Proposition \ref{prop-strongly-WF-is-WF}(1), Proposition \ref{prop-strong-R-is-strong-WF}(1) and Theorem \ref{theor-locally-compact-strong-$R$-space}, we get the following.

\begin{corollary}\label{cor-co-compact-strong-$R$-space}
Let $(X,\tau)$ be a core-compact strong $R$-space. Then $\Gamma((X, \tau))\setminus \{\emptyset\}=Q((X,\tau^{d}))$, and hence $\tau=\tau^{dd}$.
\end{corollary}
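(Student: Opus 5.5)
The plan is to reduce to Theorem \ref{theor-locally-compact-strong-$R$-space}, which does all the topological work once local compactness is available. The statement gives only core-compactness, so the first step is to show that a core-compact strong $R$-space is locally compact. This is the only step with real content. After that, the conclusion $\Gamma((X, \tau))\setminus \{\emptyset\}=Q((X,\tau^{d}))$ and $\tau=\tau^{dd}$ follows immediately from Theorem \ref{theor-locally-compact-strong-$R$-space}.

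First, Proposition \ref{prop-strong-R-is-strong-WF}(1) says that $(X,\tau)$, being a strong $R$-space, is strongly well-filtered. Proposition \ref{prop-strongly-WF-is-WF}(1) then makes it well-filtered. So $(X,\tau)$ is core-compact and well-filtered, which is condition (4) of Theorem \ref{theor-LC-sober=LC-wf=CC-sober}. The equivalence (4) $\Leftrightarrow$ (2) of that theorem now tells us $(X,\tau)$ is locally compact (and well-filtered, indeed sober).

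Second, $(X,\tau)$ is a locally compact strong $R$-space, so Theorem \ref{theor-locally-compact-strong-$R$-space} applies verbatim. Unwinding it, Theorem \ref{theor-strong-$R$-space-charac} gives the inclusion $\Gamma((X,\tau))\setminus\{\emptyset\}\subseteq Q((X,\tau^{d}))$, hence $\tau\subseteq\tau^{dd}$. Theorem \ref{theor-locally-compact-dual-topology}, using local compactness, gives the reverse inclusion and $\tau^{dd}\subseteq\tau$.

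The genuinely nontrivial ingredient is the passage from core-compact to locally compact. Here it is entirely absorbed by the cited Theorem \ref{theor-LC-sober=LC-wf=CC-sober}, which requires well-filteredness; without well-filteredness the two notions differ, as Remark \ref{rem-Hofmann-Lawson} shows. So the only thing to check carefully is that the chain strong $R$-space $\Rightarrow$ strongly well-filtered $\Rightarrow$ well-filtered is correctly invoked, and there is no further obstacle.
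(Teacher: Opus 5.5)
Your proposal is correct and follows the paper's own route. You pass from strong $R$-space to strongly well-filtered to well-filtered, use core-compact plus well-filtered to get local compactness, and then apply the theorem on locally compact strong $R$-spaces; these are exactly the four results the paper cites for this corollary.
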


The following corollary follows directly from  Proposition \ref{prop-strong-R-is-strong-WF}(2) and Theorem \ref{theor-locally-compact-strong-$R$-space}.

\begin{corollary}\label{cor-locally-compact-T2-space}
If $(X,\tau)$ is a locally compact coherent well-filtered space (especially, if $(X, \tau)$ is a locally compact $T_2$-space), then $\Gamma((X, \tau))\setminus \{\emptyset\}=Q((X,\tau^{d}))$, and hence $\tau=\tau^{dd}$.
\end{corollary}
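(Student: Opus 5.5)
The corollary is a direct combination of two earlier results. Proposition \ref{prop-strong-R-is-strong-WF}(2) supplies the strong $R$-property, and Theorem \ref{theor-locally-compact-strong-$R$-space} then gives both conclusions. The only real work is checking the parenthetical case, that a locally compact $T_2$-space falls under the hypothesis.

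First, let $(X,\tau)$ be locally compact, coherent and well-filtered. By Proposition \ref{prop-strong-R-is-strong-WF}(2), every coherent well-filtered space is a strong $R$-space, so $(X,\tau)$ is a locally compact strong $R$-space. Theorem \ref{theor-locally-compact-strong-$R$-space} then applies directly. It yields $\Gamma((X,\tau))\setminus\{\emptyset\}=Q((X,\tau^{d}))$ and $\tau=\tau^{dd}$.

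Unwound, this equality comes from two inclusions proved earlier. Theorem \ref{theor-strong-$R$-space-charac} gives $\Gamma((X,\tau))\setminus\{\emptyset\}\subseteq Q((X,\tau^{d}))$, and Theorem \ref{theor-locally-compact-dual-topology} gives the reverse inclusion. Passing to the topologies, $\tau\subseteq\tau^{dd}$ by Corollary \ref{cor-strong-$R$-space-tau-dd}, and $\tau^{dd}\subseteq\tau$ by Theorem \ref{theor-locally-compact-dual-topology}.

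For the special case, let $(X,\tau)$ be a locally compact $T_2$-space. Proposition \ref{prop-strong-R-is-strong-WF}(3) (or Proposition \ref{prop-strongly-WF-is-WF}(4)) says every $T_2$-space is coherent and sober. Sober spaces are well-filtered; this is standard and can be taken from Theorem \ref{theor-LC-sober=LC-wf=CC-sober} in the locally compact setting. Hence $X$ is a locally compact coherent well-filtered space, and the first case applies.

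No step here is a genuine obstacle; the point needing the most care is the implication from $T_2$ to coherent well-filtered, and Proposition \ref{prop-strong-R-is-strong-WF}(3) covers it.
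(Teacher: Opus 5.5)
Your proposal is correct and takes the same route as the paper. The paper derives the corollary in one line from Proposition \ref{prop-strong-R-is-strong-WF}(2) (coherent well-filtered spaces are strong $R$-spaces) together with the theorem on locally compact strong $R$-spaces; your treatment of the $T_2$ case via Proposition \ref{prop-strong-R-is-strong-WF}(3) only makes explicit what the paper leaves implicit.
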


In Corollary \ref{cor-strong-$R$-space-tau-dd} and Theorem \ref{theor-locally-compact-strong-$R$-space}, the condition that $(X,\tau)$ is a strong $R$-space can not be weakened to that of $(X,\tau)$ is an $R$-space or the strong well-filteredness of $(X,\tau)$, as shown in the following two examples.

\begin{example}\label{exam-X-cof-T1-locally-compact-tau-not-tau-dd} Let $\tau=\{\mathbb{N}\setminus F : F\in\mathbb{N}^{(<\omega)}\}\cup\{\emptyset, \{0\}\}$. Then
\begin{enumerate}[\rm (a)]
\item $\tau$ is a $T_1$-topology on $\mathbb{N}$ and hence $(\mathbb{N}, \tau)$ is an $R$-space.
\item $Q((\mathbb{N}, \tau))=2^{\mathbb{N}}\setminus \{\emptyset\}$.

Clearly, every finite subset of $\mathbb{N}$ is compact in $(\mathbb{N}, \tau)$. Assume that $A$ is an infinite subset of $\mathbb{N}$ and $\{U_i : i\in I\}\subseteq \tau$ is an open cover of $A$. Select $x\in A\setminus \{0\}$. Then $x\in U_j$ for some $j\in I$. Since $x\neq 0$, $U_j\neq \{0\}$, whence there is $G\in \mathbb{N}^{(<\omega)}$ with $U_j=\mathbb{N}\setminus G$. Therefore, $A\setminus U_j\subseteq G$ is finite, and hence there is $I_0\in I^{(<\omega)}$ such that $A\setminus U_j\subseteq \bigcup_{i\in I_0} U_i$. So $A\subseteq \bigcup_{i\in I_0\cup \{j\}} U_i$. Hence $A\in Q((\mathbb{N}, \tau))$. Thus $Q((\mathbb{N}, \tau))=2^{\mathbb{N}}\setminus \{\emptyset\}$.
\item $(\mathbb{N}, \tau)$ is locally compact and $\tau^d=2^{\mathbb{N}}$ by (b).
\item $\tau\not\subseteq\tau^{dd}$.

By $\tau^d=2^{\mathbb{N}}$, we have $\tau^{dd}=\{\mathbb{N}\setminus F : F\in \mathbb{N}^{(<\omega)}\}\cup \{\emptyset\}$. Then $\{0\}\in \tau\setminus \tau^{dd}$, and hence $\tau\not\subseteq \tau^{dd}$.
\end{enumerate}
\end{example}

\begin{example}\label{exam-strongly-WF-is-not-$R$-space-1}  Let $P$ be the countable Noetherian dcpo in Example \ref{exam-strongly-WF-is-not-$R$-space}. Consider the Scott topology $\sigma(P)$. Then we have the following conclusions (see Example \ref{exam-strongly-WF-is-not-$R$-space}):
\begin{enumerate}[\rm (a)]
\item $\sigma(P)=\alpha(P)$, $\Gamma (\Sigma~\!\!P)=\mathbf{down}~\!P$ and $Q(\Sigma~\!\!P)=\mathbf{Fin}~\!P$.
\item $\Sigma~\!\!P$ is strongly well-filtered but it is not a strong $R$-space.
\item $\Sigma~\!\!P$ is locally compact by (a).
\item $\{P\setminus {\uparrow}F : {\uparrow}F\in\mathbf{Fin}~\!P\}$ is a base of $\sigma (P)^d$ by $Q(\Sigma~\!\!P)=\mathbf{Fin}~\!P$.
\item $\sigma(P)\nsubseteq \sigma(P)^{dd}$.

Clearly, $P\setminus \{\top\}$ is a Scott closed set of $P$. Now we show that $P\setminus \{\top\}$ is not closed in $(P, \sigma(P)^{dd})$. Assume, on the contrary, that $P\setminus \{\top\}\in \Gamma ((P, \sigma(P)^{dd})$. Then there is a family $\{G_i : i\in I\}\subseteq Q((P, \sigma(P)^d))$ such that $P\setminus \{\top\}=\bigcap_{i\in I}G_i$ (note that $Q((P, \sigma(P)^d))$ is a basis for the closed sets of $(P, \sigma(P)^{dd}$)). So $P\setminus \{\top\}\subseteq G_i$ for all $i\in I$ and there is $j\in I$ with $G_j=P\setminus \{\top\}$, whence $P\setminus \{\top\}$ is compact in $(P, \sigma(P)^d)$. As $\bigcap\limits_{n\in\mathbb{N}^+}{\uparrow}a_n=\{\top\}$, $\{P\setminus {\uparrow}a_n : n\in\mathbb{N}^+\}$ is an open cover of $P\setminus \{\top\}$ in $(P, \tau^d)$ but has no finite subcover, which contradicts $P\setminus \{\top\}\in Q((P, \sigma(P)^d))$. Therefore, $P\setminus \{\top\}$ is not closed in $(P, \sigma(P)^{dd}$, and hence $\sigma(P)\nsubseteq \sigma(P)^{dd}$.
\end{enumerate}
\end{example}

\section{Some basic properties of strong $R$-spaces}

A topological property $S$ is said to be \emph{hereditary} (\emph{saturated}-\emph{hereditary}, \emph{closed}-\emph{hereditary}, \emph{open}-\emph{hereditary} etc.) if for any space $X$ that has the property $S$, every subspace (every saturated subspace,
closed subspace, open subspace etc.) of $X$ also has the property $S$.

First, we show that the property of being a strong $R$-space is closed-hereditary and saturated-hereditary.

\begin{proposition}\label{closed-$R$-space} Let $X$ be a strong $R$-space and $A$ a nonempty closed subset of $X$. Then $A$ as a closed subspace of $X$ is a strong $R$-space.
\end{proposition}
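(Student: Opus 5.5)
The plan is to work directly from Definition \ref{def-strong-$R$-space} and move between the subspace $A$ and the ambient space $X$. Two standard facts about a closed subspace carry the whole argument. First, the specialization order of $A$ is the restriction of that of $X$. Second, a subset $K\subseteq A$ is compact in $A$ if and only if it is compact in $X$, since open covers in the subspace topology correspond to traces of open covers in $X$.

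\textbf{Step 1.} Take a nonempty family $\{K_i : i\in I\}\subseteq Q(A)$ and an open set $V$ of $A$ with $\bigcap_{i\in I}K_i\subseteq V$. Write $V=U\cap A$ for some $U\in\mathcal O(X)$. Each $K_i$ is compact in $X$, so ${\uparrow}_X K_i\in Q(X)$, where the saturation is taken in $X$.

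\textbf{Step 2.} Relate the saturations back to $A$. I claim ${\uparrow}_X K_i\cap A=K_i$. Let $y\in A$ with $k\le y$ for some $k\in K_i$. The order on $A$ is the restricted order, and $K_i$ is an upper set in $A$, so $y\in K_i$. Consequently
$$\bigcap_{i\in I}{\uparrow}_X K_i\cap A=\bigcap_{i\in I}K_i\subseteq U.$$

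\textbf{Step 3.} Absorb the factor $A$ into the open set. From Step 2,
$$\bigcap_{i\in I}{\uparrow}_X K_i\subseteq U\cup (X\setminus A),$$
and $U\cup(X\setminus A)$ is open in $X$ because $A$ is closed. This is the step where closedness of $A$ is used, and it is the key point of the proof. Since $X$ is a strong $R$-space, there is $J\in I^{(<\omega)}$ with
$$\bigcap_{i\in J}{\uparrow}_X K_i\subseteq U\cup(X\setminus A).$$
Intersecting both sides with $A$ and applying Step 2 gives $\bigcap_{i\in J}K_i\subseteq U\cap A=V$.

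\textbf{Step 4.} Finally, $A$ is $T_0$ as a subspace of a $T_0$-space. Together with Step 3, this shows that $A$ is a strong $R$-space.

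The only delicate point is the identity ${\uparrow}_X K_i\cap A=K_i$ in Step 2. It holds because the specialization order of $A$ is the restriction of the order of $X$, so saturatedness in $A$ means being an upper set in $A$. No further obstacle is expected.

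As an alternative, one could use Theorem \ref{theor-strong-$R$-space-charac}, but the direct argument above is simpler.
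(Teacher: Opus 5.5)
Your proof is correct and is essentially the paper's argument. You pass to the saturations ${\uparrow}_X K_i\in Q(X)$, use ${\uparrow}_X K_i\cap A=K_i$, enlarge the open set to $U\cup(X\setminus A)$ (open because $A$ is closed), apply the strong $R$-property of $X$, and intersect back with $A$.
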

\begin{proof}
Suppose that $\{K_{i}:i\in I\}\subseteq Q(A)$ is a nonempty family and $U\in \mathcal{O}(X)\mid_A$ satisfying $\bigcap_{i\in I}K_{i}\subseteq U$. For each $i\in I$, we have ${\uparrow_X}K_{i}\in Q(X)$ and $K_i=A\cap \ua _{X}K_{i}$. As $U\in \mathcal{O}(A)$, there is $V\in \mathcal{O}(X)$ such that $U=V\cap A$. Then $\bigcap_{i\in I}{\ua_X}K_{i}\subseteq U\cup (X\setminus A)\subseteq V\cup (X\setminus A)\in \mathcal O(X)$. Since $X$ is a strong $R$-space, there exists $J\in I^{(<\omega)}$ such that  $\bigcap_{i\in J}{\ua_{X}}K_{i}\subseteq V\cup (X\setminus A)$. Hence $\bigcap_{i\in J}K_{i}=\bigcap_{i\in I_{0}}{\ua_{X}}K_{i}\cap A\subseteq (V\cup (X\setminus A))\cap A=U$. Thus $A$ as a closed subspace of $X$ is a strong $R$-space.
\end{proof}

\begin{proposition}\label{saturated-strongly-WF-is-also} Let $X$ be a strong $R$-space and $U$ a nonempty saturated subset of $X$. Then $U$ as a subspace of $X$ is a strong $R$-space.
\end{proposition}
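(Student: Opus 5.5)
The plan mirrors the proof of Proposition \ref{closed-$R$-space}, but the key point is different. For a saturated subspace, compact saturated sets of the subspace are already compact saturated in $X$. Let $\{K_i : i\in I\}\subseteq Q(U)$ be a nonempty family and $W\in\mathcal O(X)|_U$ with $\bigcap_{i\in I}K_i\subseteq W$. Write $W=V\cap U$ for some $V\in\mathcal O(X)$.

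First we check that each $K_i$ belongs to $Q(X)$. Since the subspace topology is induced, any cover of $K_i$ by open sets of $X$ restricts to a cover by open sets of $U$. Hence $K_i$ is compact in $X$, and it is nonempty. The specialization order of $U$ is the restriction of $\leq_X$, so $K_i$ is an upper set in $U$. Because $U$ is saturated in $X$, i.e. $U={\ua_X}U$, we get ${\ua_X}K_i\subseteq U$, and therefore ${\ua_X}K_i={\ua_U}K_i=K_i$. Thus $K_i$ is saturated in $X$, and $K_i\in Q(X)$.

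Next we use $K_i\subseteq U$. This gives $\bigcap_{i\in I}K_i\subseteq V\cap U\subseteq V$ with $V\in\mathcal O(X)$. Since $X$ is a strong $R$-space, there is $J\in I^{(<\omega)}$ with $\bigcap_{i\in J}K_i\subseteq V$. As $J$ is nonempty, $\bigcap_{i\in J}K_i\subseteq U$, and so $\bigcap_{i\in J}K_i\subseteq V\cap U=W$. Finally, $U$ is $T_0$ as a subspace of a $T_0$-space, so $U$ is a strong $R$-space.

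The only step needing care is the verification that saturation relative to $U$ coincides with saturation in $X$. This is exactly where the hypothesis $U={\ua_X}U$ is used; the rest is routine.
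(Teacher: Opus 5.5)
Your proof is correct and follows the paper's argument: write the relatively open set as $V\cap U$ with $V\in\mathcal O(X)$, note that $Q(U)\subseteq Q(X)$ because $U={\ua_X}U$, apply the strong $R$-property of $X$ to $V$, and intersect back with $U$. Your explicit check that compactness and saturation pass from $U$ to $X$ supplies the step the paper simply calls ``clear''.
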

\begin{proof}
Assume that $\{K_{i}:i\in I\}\subseteq Q(U)$ is a nonempty family and $V\in \mathcal{O}(X)\mid_U$ satisfying $\bigcap_{i\in I}K_{i}\subseteq V$. Then there exists $W\in \mathcal{O}(X)$ such that $V=W\cap U$, whence $\bigcap_{i\in I}K_{i}\subseteq W$. As $U=\ua_{X} U$, it is clear that $\{K_{i}:i\in I\}\subseteq Q(X)$. Since $X$ is a strong $R$-space, there exists $J\in I^{(<\omega)}$ such that $\bigcap_{i\in J}K_{i}\subseteq W$. Then $\bigcap_{i\in J}K_{i}\subseteq W\cap U=V$, and this completes the proof that $U$ as a subspace of $X$ is a strong $R$-space.
\end{proof}

A topological property $S$ is said to be \emph{retractive} if for any space $X$ that has the property $S$, every retract of $X$ also has the property $S$.

Now we show that the property of being a strong $R$-space is retractive.

\begin{proposition}\label{prop-retract-strongly-WF}
A retract of a strong $R$-space is a strong $R$-space.
\end{proposition}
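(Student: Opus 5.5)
Let $X$ be a strong $R$-space and $Y$ a retract of $X$, witnessed by continuous maps $f : X\rightarrow Y$ and $g : Y\rightarrow X$ with $f\circ g=id_Y$. The plan is to transport a family in $Q(Y)$ into $Q(X)$ by $g$, apply the strong $R$-property in $X$ to a suitable open set pulled back by $f$, and push the finite subfamily back to $Y$.

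Let $\{K_i : i\in I\}\subseteq Q(Y)$ be nonempty and $U\in\mathcal O(Y)$ with $\bigcap_{i\in I}K_i\subseteq U$.

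\textbf{Step 1.} Form $H_i={\uparrow_X} g(K_i)$. Since $g$ is continuous, $g(K_i)$ is compact, and the saturation of a compact set is compact, so $H_i\in Q(X)$.

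\textbf{Step 2.} Show $\bigcap_{i\in I}H_i\subseteq f^{-1}(U)$, which is open in $X$. Take $x\in\bigcap_i H_i$. For each $i$ there is $k_i\in K_i$ with $g(k_i)\leq_X x$. Continuous maps are monotone for the specialization orders, so $k_i=f(g(k_i))\leq_Y f(x)$. As $K_i$ is saturated, $f(x)\in K_i$ for every $i$. Hence $f(x)\in\bigcap_i K_i\subseteq U$, that is, $x\in f^{-1}(U)$.

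\textbf{Step 3.} Apply the strong $R$-property of $X$: there is $J\in I^{(<\omega)}$ with $\bigcap_{i\in J}H_i\subseteq f^{-1}(U)$.

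\textbf{Step 4.} Conclude. For $y\in\bigcap_{i\in J}K_i$, we have $g(y)\in g(K_i)\subseteq H_i$ for all $i\in J$. So $g(y)\in f^{-1}(U)$, and therefore $y=f(g(y))\in U$. This gives $\bigcap_{i\in J}K_i\subseteq U$, so $Y$ is a strong $R$-space.

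The only point requiring care is Step 2, where the saturation of each $K_i$ and the monotonicity of $f$ together yield $f(\bigcap_i H_i)\subseteq\bigcap_i K_i$. The other steps are routine.
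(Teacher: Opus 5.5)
Your argument is correct and is essentially the paper's own proof: saturate the images $g(K_i)$ in $X$, show their intersection lies in $f^{-1}(U)$ using the monotonicity of $f$ and the saturation of the $K_i$, apply the strong $R$-property of $X$, and then pull the finite subfamily back via $f\circ g=id_Y$. The one detail the paper includes and you omit is that $Y$ must be $T_0$, which the definition of a strong $R$-space requires; this follows because if $y_1\leq_Y y_2\leq_Y y_1$, then monotonicity of $g$ and $T_0$-ness of $X$ give $g(y_1)=g(y_2)$, hence $y_1=f(g(y_1))=f(g(y_2))=y_2$.
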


\begin{proof}  It is well-known that a retract of a $T_0$-space is $T_0$ (cf. \cite[Lemma 4.10.32]{Goubault-2013}).
Let $X$ be a strong $R$-space and $Y$ a retract of $X$. Then $Y$ is a $T_0$-space and there are continuous mappings $f : X\rightarrow Y$ and $g: Y\rightarrow X$ with $f\circ g=id_Y$. Suppose that $\{G_i:i\in I\}\subseteq Q(Y)$ and $V\in \mathcal O(Y)$ satisfying $\bigcap_{i\in I}G_i\subseteq V$. Then $\{{\ua}~\!\!g(G_i) : i\in I\}\subseteq Q(X)$ and $f(\bigcap_{i\in I}{\ua}~\!\!g(G_i))\subseteq \bigcap_{i\in I}f({\ua}~\!\! g(G_i))\subseteq \bigcap_{i\in I}{\ua} f(g(G_i))=\bigcap_{i\in I}G_i\subseteq V$. Hence $\bigcap_{i\in I}{\ua}~\!\!g(G_i)\subseteq f^{-1}(V)\in\mathcal O(X)$. As $X$ is a strong $R$-space, there exists $J\in I^{(<\omega)}$ such that $\bigcap_{i\in J}\ua ~\!\!g(G_i)\subseteq f^{-1}(V)$. Then $g(\bigcap_{i\in J}G_i)\subseteq\bigcap_{i\in J}{\uparrow} ~\!\!(g(G_i))\subseteq f^{-1}(V)$, whence $\bigcap_{i\in J}G_i=f(g(\bigcap_{i\in J}G_i))\subseteq f(f^{-1}(V))\subseteq V$, and this completes the proof that $Y$ is a strong $R$-space.
\end{proof}

From Lemma \ref{prod-retract} and Proposition \ref{prop-retract-strongly-WF} we deduce the following.

\begin{proposition}\label{prop-prodct-strongly-WF}
	For a family $\{X_i :i\in I\}$ of $T_0$-spaces, if the product space $\prod_{i\in I}X_i$ is a strong $R$-space, then $X_i$ is a strong $R$-space for all $i\in I$.
\end{proposition}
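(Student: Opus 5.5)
The plan is to obtain Proposition \ref{prop-prodct-strongly-WF} directly from Lemma \ref{prod-retract} and Proposition \ref{prop-retract-strongly-WF}. No new estimate on compact saturated sets in products should be needed.

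Fix $i_0\in I$ and suppose $X=\prod_{i\in I}X_i$ is a strong $R$-space. We may assume every $X_i$ is nonempty; otherwise the product is empty and the claim about the factors is degenerate. By Lemma \ref{prod-retract}, $X_{i_0}$ is a retract of $X$. Concretely, let $f=p_{i_0}:X\rightarrow X_{i_0}$ be the projection. Choose a point $a_i\in X_i$ for each $i\neq i_0$, using the axiom of choice. Define $g:X_{i_0}\rightarrow X$ by sending $x$ to the point with $i_0$-coordinate $x$ and $i$-coordinate $a_i$ for $i\neq i_0$. Then $g$ is continuous, because each coordinate of $g$ is either the identity or a constant map. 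Moreover $f\circ g=id_{X_{i_0}}$.

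Proposition \ref{prop-retract-strongly-WF} then shows that $X_{i_0}$ is a strong $R$-space, since it is a retract of one. It is also a $T_0$-space, as that proposition already notes. Since $i_0$ was arbitrary, every factor is a strong $R$-space.

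The only point needing care is the nonemptiness of the factors. It is needed both to choose the base points $a_i$ and to make the retraction meaningful. Beyond that, the argument is a two-line composition of the cited results, and I expect no real obstacle.
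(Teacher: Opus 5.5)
Your argument is exactly the paper's: it deduces the statement from Lemma \ref{prod-retract} together with Proposition \ref{prop-retract-strongly-WF}. One small correction: you cannot simply assume the empty-factor case away. An empty product is vacuously a strong $R$-space, since there is no nonempty family in $Q(\emptyset)=\emptyset$, while a nonempty factor such as the cofinite space $X_{cof}$ need not be one, so nonemptiness of all factors is an implicit hypothesis of the statement (and of Lemma \ref{prod-retract}) rather than a harmless reduction.
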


In \cite[Example 5.5]{Xu-2026}, Xu gave two Scott spaces which are strongly well-filtered and $R$-spaces but their product space is not a strong $d$-space and hence it is neither a strongly well-filtered space nor an $R$-space. Now we further show that these two Scott spaces are strong $R$-spaces, and hence the property of being a strong $R$-space is not finite productive.

\begin{example}\label{exam-product-strong-$R$-space-is-not} By adding a top element $\infty$ to $\mathbb{N}^{+}$, we get a countable complete chain $\mathbb{N}^{+}_{\infty}=\mathbb{N}^{+}\cup \{\infty\}$. Let $Q=\{\omega_n : n\in\mathbb{N}^{+}\}\cup \{a, b\}$. Define an order on $Q$ as follows (see Figure 3):
\begin{enumerate}[\rm (1)]
\item $a<\omega_{n}$ and $b<\omega_n$ for all $n\in \mathbb{N}^{+}$,

\item for any $n, m\in \mathbb{N}^{+}$ with $n\neq m$, $\omega_n$ and $\omega_m$ are incomparable, and

\item $a$ and $b$ are incomparable.

\end{enumerate}

\begin{figure}[ht]
	\centering
	\includegraphics[height=5.0cm,width=12cm]{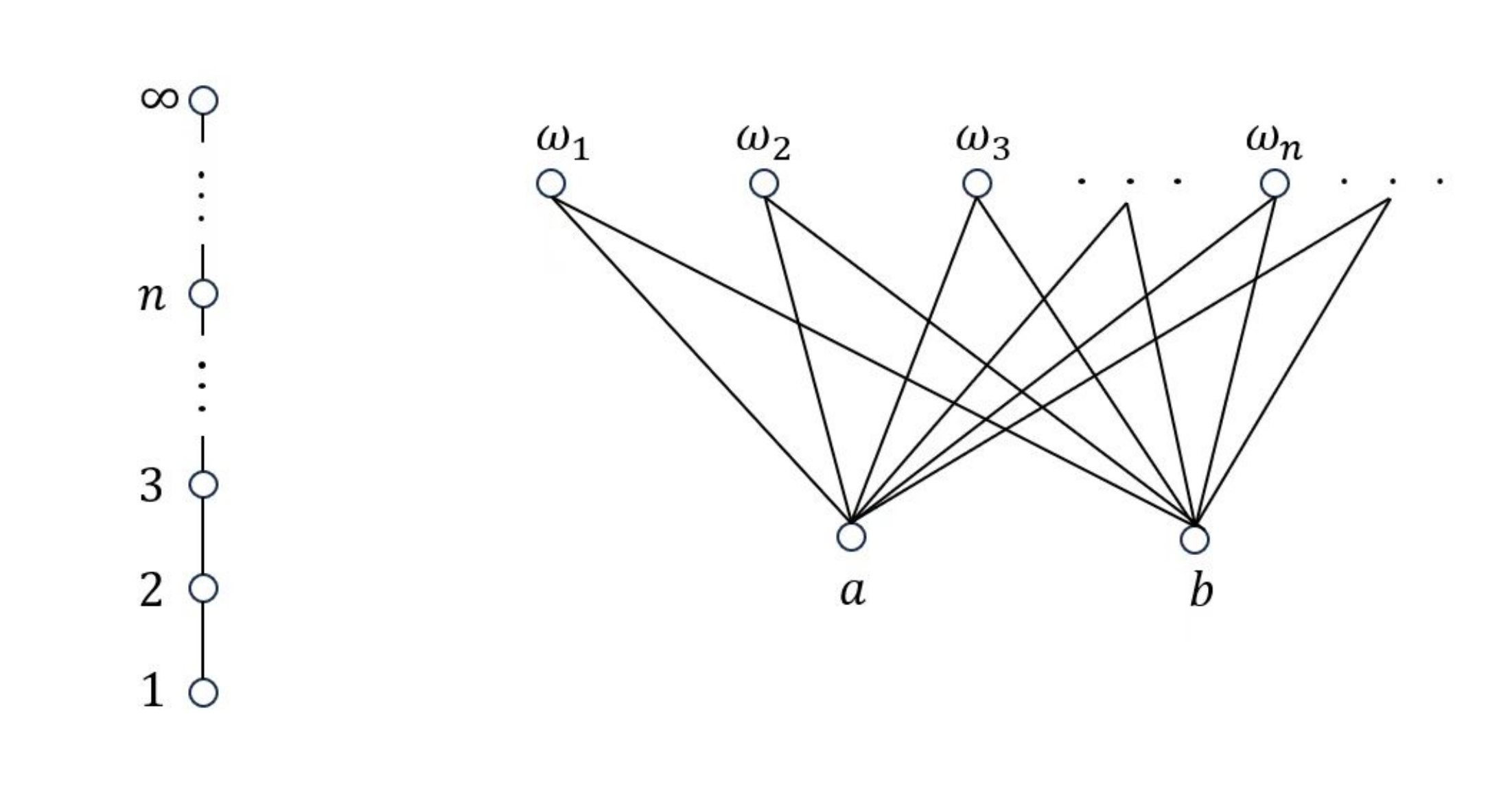}
	\caption{The countable complete chain $\mathbb{N}^{+}_\infty$ and countable Noetherian dcpo $Q$ in Example \ref{exam-product-strong-$R$-space-is-not}}
\end{figure}

\begin{figure}[ht]
	\centering
	\includegraphics[height=5.5cm,width=12cm]{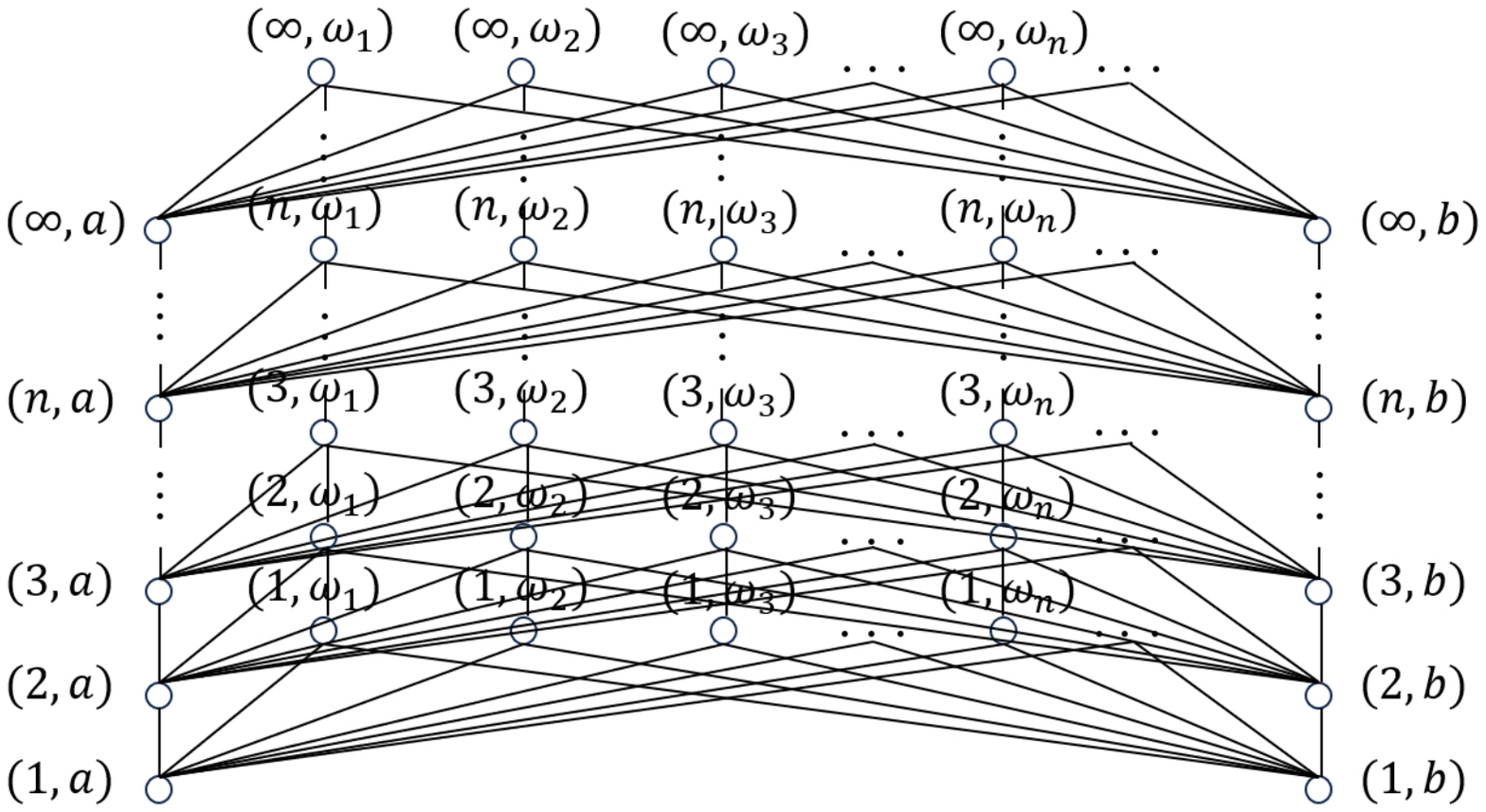}
	\caption{The product dcpo $\mathbb{N}^{+}_\infty\times Q$ in Example \ref{exam-product-strong-$R$-space-is-not}}
\end{figure}

Consider the product dcpo $\mathbb{N}^{+}_\infty \times Q$ (see Figure 4). Then we have the following conclusions:
\begin{enumerate}[\rm (a)]

\item $Q$ is a countable Noetherian dcpo.

\item $\sigma(Q)=\alpha(Q)$ and $Q(\Sigma~\!\!Q)=\mathbf{Fin}~\!Q$ by Lemma \ref{lem-compact-saturated-in-Alexanderoff-topologyin} and Proposition \ref{prop-Alexandroff-topology-sober}.

\item  $\Sigma~\!\!\mathbb{N}^{+}_\infty$ is a strong $R$-space by Proposition \ref{prop-complete-lattice-Scott-space-strong-$R$-space}.

    \item  $\Sigma~\!\!Q$ is a strong $R$-space.

    Let $\{{\uparrow}F_i : i\in I\}\subseteq Q(\Sigma~\!\!Q)=\mathbf{Fin}~\!Q$ and $U\in \sigma(Q)$ satisfying $\bigcap_{i\in I}{\ua}F_{i}\subseteq U$.

    {\bf Case 1:} $Q\setminus U$ is finite.

    By $\bigcap_{i\in I}{\ua}F_{i}\subseteq U$, we have $Q\setminus U\subseteq \bigcup_{i\in I} (Q\setminus {\ua}F_i)$. Hence there is $J\in I^{(<\omega)}$ such that $Q\setminus U\subseteq \bigcup_{i\in J}(Q\setminus {\ua}F_i)$, or equivalently, $\bigcap_{i\in J}{\ua}F_i\subseteq U$.

    {\bf Case 2:} $Q\setminus U$ is infinite.

    As $Q\setminus U$ is infinite, $\omega_m\in Q\setminus U$ for some $m\in \mathbb{N}^+$. Then there is $j\in I$ with $\omega_m\in Q\setminus {\ua}F_j$, whence $F_j$ is a finite subset of $\{\omega_n : n\in\mathbb{N}^+\}$. Therefore, $(Q\setminus U)\setminus (Q\setminus {\ua}F_j)\subseteq {\ua}F_j=F_j$ is finite, and consequently, there is $I_0\in I^{(<\omega)}$ such that $(Q\setminus U)\setminus (Q\setminus {\ua}F_j)\subseteq \bigcup_{i\in I_0}(Q\setminus {\ua}F_i)$. Hence $Q\setminus U\subseteq \bigcup_{i\in I_0\cup\{j\}}(Q\setminus {\ua}F_i)$, or equivalently, $\bigcap_{i\in I_0\cup\{j\}}{\ua}F_i\subseteq U$.

Therefore, $\Sigma~\!\!Q$ is a strong $R$-space.

\item $\Sigma~\!\!\mathbb{N}^{+}_\infty\times \Sigma~\!\! Q$ is not a strong $R$-space.

It was proved in \cite[Example 5.5]{Xu-2026} that $\Sigma~\!\!(\mathbb{N}^{+}_\infty\times Q)=\Sigma~\!\!\mathbb{N}^{+}_\infty\times \Sigma~\!\! Q$ and $\Sigma~\!\!(\mathbb{N}^{+}_\infty\times Q)$ is not a strong $d$-space. By Proposition \ref{prop-strongly-WF-is-WF}(2) and Proposition \ref{prop-strong-R-is-strong-WF}(1), $\Sigma~\!\!(\mathbb{N}^{+}_\infty\times Q)$ is not a strong $R$-space, and hence $\Sigma~\!\!\mathbb{N}^{+}_\infty\times \Sigma~\!\! Q$ is not a strong $R$-space.
\end{enumerate}
\end{example}

From Proposition \ref{prop-category-reflective-productive} and Example \ref{exam-product-strong-$R$-space-is-not} we immediately deduce the following result (comparing it with Theorem \ref{theor-sober-WF-d-space-reflective}).

\begin{theorem}\label{theor-category-strongly-WF-not-reflective} $\mathbf{S}$-$\mathbf{Top}_r$ is not reflective in $\mathbf{Top}_0$.
\end{theorem}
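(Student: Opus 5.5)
The plan is a contradiction argument that combines Proposition \ref{prop-category-reflective-productive} with Example \ref{exam-product-strong-$R$-space-is-not}. Suppose, towards a contradiction, that $\mathbf{S}$-$\mathbf{Top}_r$ is reflective in $\mathbf{Top}_0$. Since $\mathbf{S}$-$\mathbf{Top}_r$ is a full subcategory of $\mathbf{Top}_0$ (its morphisms are all continuous mappings between strong $R$-spaces), Proposition \ref{prop-category-reflective-productive} applies. It gives that $\mathbf{S}$-$\mathbf{Top}_r$ is productive, and in particular finite productive. So for any two strong $R$-spaces $X_1, X_2$, the product space $X_1\times X_2$ formed in $\mathbf{Top}_0$ must again be a strong $R$-space.

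Now take $X_1=\Sigma~\!\!\mathbb{N}^{+}_\infty$ and $X_2=\Sigma~\!\!Q$ from Example \ref{exam-product-strong-$R$-space-is-not}. By items (c) and (d) of that example, both are strong $R$-spaces. By item (e), their topological product $\Sigma~\!\!\mathbb{N}^{+}_\infty\times\Sigma~\!\!Q$ is not a strong $R$-space. This contradicts finite productivity, so $\mathbf{S}$-$\mathbf{Top}_r$ is not reflective.

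Only one point needs checking: the product in $\mathbf{Top}_0$ must be the usual topological product. This is standard, since a product of $T_0$-spaces is $T_0$ and the topological product satisfies the universal property within $\mathbf{Top}_0$. It is also the sense in which products are taken in the paper's definition of productivity, so there is no real obstacle here. The substantive work, showing that the product fails to be even a strong $d$-space, was already done in Example \ref{exam-product-strong-$R$-space-is-not}(e) via \cite[Example 5.5]{Xu-2026}.
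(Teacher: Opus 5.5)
Your proposal is correct and is essentially the paper's own argument. The paper deduces the theorem directly from Proposition \ref{prop-category-reflective-productive} (reflective full subcategories of $\mathbf{Top}_0$ are productive) together with the example showing that $\Sigma\mathbb{N}^{+}_\infty$ and $\Sigma Q$ are strong $R$-spaces whose product is not. Your remark that products in $\mathbf{Top}_0$ are the ordinary topological products is correct and only makes explicit what the paper leaves implicit.
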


\section{Smyth power spaces, Scott power spaces and strong $R$-spaces}

The Smyth power space is a very important structure in domain theory, which plays a fundamental role in modeling the semantics of non-deterministic programming languages (see \cite{Abramsky-Jung-1994, GHKLMS-2003, Heckmann-1992, Schalk-1993}). There naturally arises a question of which topological properties are preserved by the Smyth power construction. In this paper, we focus on the property of being a strong $R$-space.

\begin{proposition}\label{prop-Smyth-power-space-$R$-space-X-is-also} For a $T_0$-space $X$, if $P_S(X)$ is an $R$-space, then $X$ is a strong $R$-space, and hence $X$ is an $R$-space.
\end{proposition}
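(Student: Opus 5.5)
The idea is to apply property $R$ of $P_S(X)$ to the principal filters $\ua_{Q(X)} K_i$ of the points $K_i$, and then translate the result back into a statement about intersections in $X$.

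First recall the specialization order of $P_S(X)$. It is the Smyth order: $K\sqsubseteq K'$ iff $K'\subseteq K$. Indeed, the basic open sets are $\Box U$, and $K'\subseteq K$ means that every $\Box U$ containing $K$ also contains $K'$. Conversely, if $K'\not\subseteq K$, pick $x\in K'\setminus K$; then $X\setminus\da x$ is an open set containing $K$ (because $K$ is saturated), but it does not contain $K'$. Consequently, for $K\in Q(X)$, the finitely generated upper set $\ua_{\sqsubseteq}\{K\}$ in $P_S(X)$ is $\{G\in Q(X): G\subseteq K\}=\Box K\cap Q(X)$. Note that $\Box K$ here means the set of members of $Q(X)$ contained in $K$, and $K$ need not be open.

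Now let $\{K_i : i\in I\}\subseteq Q(X)$ be nonempty and $U\in\mathcal O(X)$ with $\bigcap_{i\in I}K_i\subseteq U$. Consider the family $\{\ua_{\sqsubseteq}\{K_i\} : i\in I\}\subseteq \mathbf{Fin}~\!P_S(X)$. Its intersection is $\{G\in Q(X): G\subseteq \bigcap_{i\in I}K_i\}$, and every such $G$ satisfies $G\subseteq U$. Hence
\[
\bigcap_{i\in I}\ua_{\sqsubseteq}\{K_i\}\subseteq \Box U,
\]
and $\Box U$ is open in $P_S(X)$. Since $P_S(X)$ is an $R$-space, there is $J\in I^{(<\omega)}$ with $\bigcap_{i\in J}\ua_{\sqsubseteq}\{K_i\}\subseteq\Box U$. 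In other words, every nonempty compact saturated set contained in $\bigcap_{i\in J}K_i$ lies in $U$.

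The main obstacle is passing from this statement back to the set $\bigcap_{i\in J}K_i$ itself, since that set need not be compact. This is easily handled with principal filters. Let $x\in\bigcap_{i\in J}K_i$. Each $K_i$ is saturated, so $\ua x\subseteq \bigcap_{i\in J}K_i$. Moreover $\ua x\in Q(X)$, because any open cover of $\ua x$ has a member containing $x$, and that member contains all of $\ua x$. Therefore $\ua x\subseteq U$, so $x\in U$. This gives $\bigcap_{i\in J}K_i\subseteq U$, and $X$ is a strong $R$-space. (If $\bigcap_{i\in J}K_i$ is empty the conclusion holds trivially.)

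Finally, every strong $R$-space is an $R$-space, since $\mathbf{Fin}~\!X\subseteq Q(X)$: the set $\ua F$ for finite nonempty $F$ is compact and saturated. This yields the last claim of the proposition.
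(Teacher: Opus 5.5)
Your proposal is correct and follows the paper's proof: you apply property $R$ of $P_S(X)$ to the principal filters $\ua_{Q(X)}K_i$, whose intersection lies in $\Box U$, and then obtain $\bigcap_{i\in J}K_i\subseteq U$ by testing each point $x$ with $\ua x\in Q(X)$. You also check two things the paper takes for granted: that the specialization order of $P_S(X)$ is the Smyth order, and that these principal filters belong to $\mathbf{Fin}~\!P_S(X)$.
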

\begin{proof} Assume $\{K_i : i\in I\}\subseteq Q(X)$ and $U\in \mathcal O(X)$ satisfying $\bigcap_{i\in I}K_i\subseteq U$. Then $\{\ua_{Q(X)}K_i : i\in I\}\subseteq Q((P_S(X)))$ and $\bigcap_{i\in I}\ua_{Q(X)}K_i=\{K\in Q(X) : K\subseteq \bigcap_{i\in I}K_i\}\subseteq \Box U\in \mathcal O(P_S(X))$. As $P_S(X)$ is an $R$-space, there is $J\in I^{(<\omega)}$ such that $\bigcap_{i\in J}{\ua_{Q(X)}}K_i\subseteq \Box U$. Then for any $x\in \bigcap_{i\in J}K_i$, we have ${\ua} x\in \{K\in Q(X) : G\subseteq \bigcap_{i\in J}K_i\}=\bigcap_{i\in J}{\ua_{Q(X)}}K_i\subseteq \Box U$, and hence $x\in U$. So $\bigcap_{i\in J}K_i\subseteq U$. Thus $X$ is a strong $R$-space.
\end{proof}

\begin{corollary}\label{cor-Smyth-power-space-strong-$R$-space-X-is-also} For a $T_0$-space $X$, if $P_S(X)$ is a strong $R$-space, then $X$ is a strong $R$-space.
\end{corollary}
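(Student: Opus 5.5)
The corollary should follow at once from Proposition~\ref{prop-Smyth-power-space-$R$-space-X-is-also}. The only ingredient needed is that every strong $R$-space is an $R$-space, which the paper notes right after Definition~\ref{def-strong-$R$-space}.

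First I would check that observation for $P_S(X)$ itself. Let $\{\ua F_i : i\in I\}\subseteq \mathbf{Fin}~\!P_S(X)$ and let $\mathcal U$ be open in $P_S(X)$ with $\bigcap_{i\in I}\ua F_i\subseteq\mathcal U$. Each $F_i$ is a nonempty finite subset of $Q(X)$. Hence each $\ua F_i$ (upper set in the specialization order of $P_S(X)$) is compact, being a finite union of compact sets $\ua\{K\}$. It is also saturated, so $\ua F_i\in Q(P_S(X))$.

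Since $P_S(X)$ is a strong $R$-space, Definition~\ref{def-strong-$R$-space} gives some $J\in I^{(<\omega)}$ with $\bigcap_{i\in J}\ua F_i\subseteq\mathcal U$. So $P_S(X)$ has property $R$ (Definition~\ref{def-$R$-space}), i.e.\ $P_S(X)$ is an $R$-space.

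Finally, Proposition~\ref{prop-Smyth-power-space-$R$-space-X-is-also} applied to $P_S(X)$ yields that $X$ is a strong $R$-space.

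There is no real obstacle. The only point to keep straight is that $\mathbf{Fin}$ here is taken with respect to the specialization order of $P_S(X)$, which is the Smyth order. This is exactly the order in which sets of the form $\ua\{K\}$ are compact saturated, so the inclusion $\mathbf{Fin}~\!P_S(X)\subseteq Q(P_S(X))$ holds, as it does in any $T_0$-space.
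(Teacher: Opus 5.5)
Your proposal is correct and follows the paper's own route: the corollary is deduced from the preceding proposition, which assumes only that $P_S(X)$ is an $R$-space, together with the fact that every strong $R$-space is an $R$-space. Your explicit check that $\mathbf{Fin}\,Y\subseteq Q(Y)$ for any $T_0$-space $Y$, applied here to $Y=P_S(X)$ with its Smyth specialization order, supplies the one detail the paper leaves implicit.
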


\begin{remark}\label{rem-X-$R$-space-Smyth-not-$R$-space}
	Let $X$ be a countably infinite set and $X_{cof}$ the space equipped with the co-finite topology. Then $X_{cof}$ is an $R$-space but not a strong $R$-space (see Example \ref{exam-X-cof-T1-space-not-strong-$R$-space}). Therefore, $P_S(X_{coc})$ is not an $R$-space by Proposition \ref{prop-Smyth-power-space-$R$-space-X-is-also}. So the Smyth power construction does not preserve the property of being an $R$-space.
\end{remark}

However, we do not know whether the Smyth power construction preserves the property of being a strong $R$-space.

As pointed out by Goubault-Larreq in \cite{Goubault-2012}, there is another prominent topology one can put on $Q(X)$, namely, the Scott topology. The space $\Sigma Q(X)=(Q(X), \sigma (Q(X)))$ is naturally called the \emph{Scott power space} of $X$ and is shortly denoted by $P_{\sigma}(X)$.

The following example shows that there is a second-countable Noetherian $T_0$-space $X$ such that the Scott power space $P_{\sigma}(X)$ is a  coherent sober space (and hence it is a strong $R$-space) but $X$ is not well-filtered, whence $X$ is not a strong $R$-space.

\begin{example}\label{exm-Scott-power-space-strong-$R$-space-X-not-WF}
	Let $P=\mathbb{N}^+\cup\{\infty\}$ and define an order on $P$ by $x\leq_P y$ iff $x=y$ or $x\in\mathbb{N}^+$ and $y=\infty$ (see Figure 5). Then $P$ is a countable Noetherian dcpo. Let $\tau=\{(\mathbb{N}^+\setminus F)\cup \{\infty\} : F\in (\mathbb{N}^+)^{(<\omega)}\}\cup\{\emptyset, \{\infty\}, P\}$. It is straightforward to verify that $\tau$ is a $T_0$-topology on $P$ and the specialization order of $(P, \tau)$ agrees with the original order on $P$.

 \begin{figure}[ht]
	\centering
	\includegraphics[height=3cm,width=4cm]{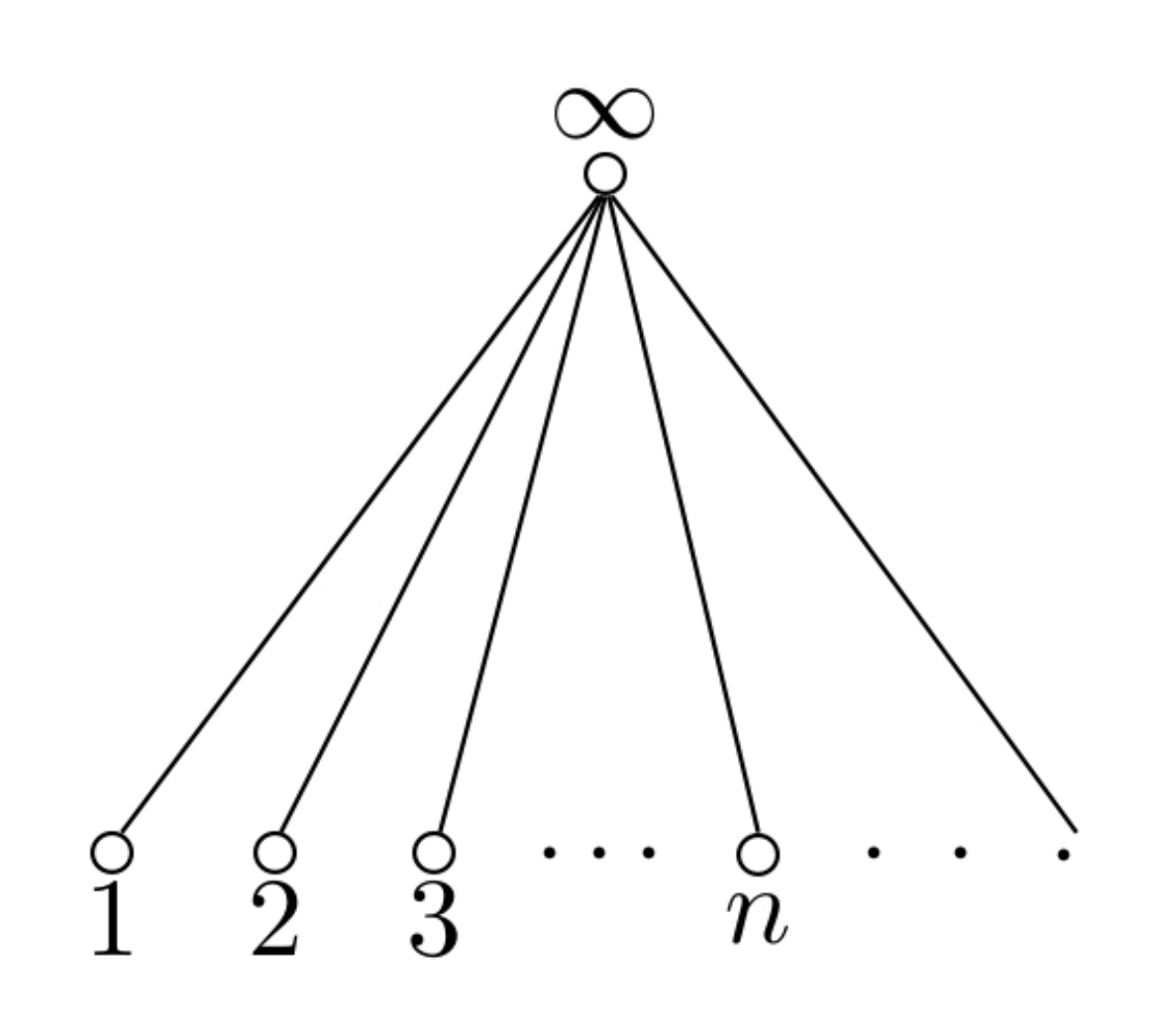}
	\caption{The countable Noetherian dcpo $P$ in Example \ref{exm-Scott-power-space-strong-$R$-space-X-not-WF}}
\end{figure}

Considering the $T_0$-space $(P, \tau)$, we have the following conclusions:
\begin{enumerate}[\rm (a)]
    \item $\upsilon(P) \subsetneqq \tau \subsetneqq \sigma(P)$.

Clearly, $\upsilon(P) \subseteq \tau$. As $\{\infty\}\in \tau$ and $\{\infty\}\notin \upsilon(P)$, we have $\upsilon(L) \subsetneqq \tau$. By Proposition \ref{prop-Alexandroff-topology-sober}, $\tau\subseteq \alpha(P)=\sigma(P)$. Since ${\uparrow}1=\{1, \infty\}\in \sigma(P)$, but $\{1, \infty\}\notin \tau$. Hence $\tau \subsetneqq \sigma(L)$.

    \item $\Gamma((P, \tau))=(\mathbb{N}^+)^{(<\omega)}\cup \{\emptyset, \mathbb{N}^+, P\}$ and $\ir_c((P, \tau))=\{\overline{\{n\}}=\{n\} : n\in \mathbb{N}^+\}\cup\{\overline{\{\infty\}}=P\}\cup\{\mathbb{N}^+\}$.
    \item $Q ((P, \tau))=\{A\cup \{\infty\} : A\subseteq \mathbb{N}^+\}$.
    \item $(P, \tau)$ is not well-filtered.

Let $\mathcal K=\{(\mathbb{N}^+\setminus F)\cup\{\infty\} : F\in (\mathbb{N}^+)^{(<\omega)}\}$. Then $\mathcal K\subseteq Q ((P, \tau))$ is a filtered family and $\bigcap \mathcal K=\{\infty\}\in \tau$. But $(\mathbb{N}^+\setminus F)\cup\{\infty\}=\{\infty\}$ for no $F\in (\mathbb{N}^+)^{(<\omega)}$. Thus $(P, \tau)$ is not well-filtered.

    \item $(P, \tau)$ is Noetherian and second-countable.

    Since $|\tau|=\omega$, $(P, \tau)$ is second-countable. As every subset of $P$ is compact in $(P, \tau)$, $(P, \tau)$ is a Noetherian space (and hence a locally compact space).

    \item $P_S((P, \tau))$ is second-countable.

Clearly, $\{\Box U : U\in \tau\}$ is a countable base of $P_S((P, \tau))$ (note that $|\tau|=\omega$). Hence $P_S((P, \tau))$ is second-countable.

\item $P_{\sigma}((P, \tau))$ is second-countable, coherent and sober.

    Clearly, $Q((P, \tau))$ is isomorphic with the algebraic lattice $2^{\mathbb{N}^+}$ (with the order of set inclusion) via the poset isomorphism $\varphi : Q ((P, \tau))\rightarrow 2^{\mathbb{N}^+}$ defined by $\varphi (A\cup \{\infty\})=\mathbb{N}^+\setminus A$ for each $A\in 2^{\mathbb{N}^+}$ (note that the order on $Q ((P, \tau))$ is the Smyth order). Hence $P_{\sigma}((P, \tau))=\Sigma~\!\!Q ((P, \tau))\cong \Sigma~\!\!2^{\mathbb{N}^+}$. As $2^{\mathbb{N}^+}$ is an algebraic lattice, by Propositions \ref{prop-algebraic-is-continuous}, \ref{prop-continuous-domain-Scott-is-sober} and \ref{prop-Scott-WF-coherent}, $\Sigma~\!\!2^{\mathbb{N}^+}$ is both sober and coherent, and hence $P_{\sigma}((P, \tau))$ is a coherent sober space. Clearly, $\Sigma~\!\!2^{\mathbb{N}^+}$ is second-countable since $\{\ua_{2^{\mathbb{N}^+}} F : F\in (2^{\mathbb{N}^+})^{(<\omega)}\}$ is a countable base of $\Sigma~\!\!2^{\mathbb{N}^+}$. So $P_{\sigma}((P, \tau))$ is second-countable.

\item $P_{\sigma} ((P, \tau))$ is a strong $R$-space by (g) and Proposition \ref{prop-strong-R-is-strong-WF}(2).
\end{enumerate}
\end{example}

But we do not know whether the Scott power space of a strong $R$-space is a strong $R$-space.

In the following, we investigate conditions under which the Smyth power space and Scott power space of a $T_0$-space is a strong $R$-space.
\begin{proposition}\label{prop-WF-Scott-WF} (\cite[Theorem 5.8]{Xu-Wen-Xi-2023}) For a well-filtered space $X$, the Scott power space $P_{\sigma}(X)$ is well-filtered.
\end{proposition}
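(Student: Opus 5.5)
The plan is to show directly that $P_{\sigma}(X)=\Sigma Q(X)$ is a $d$-space whose open sets satisfy the well-filtered condition. The first step is order-theoretic. We check that $Q(X)$ with the Smyth order is a dcpo, and that for any directed family $\{K_d : d\in D\}\subseteq Q(X)$ its supremum is $\bigcap_{d\in D}K_d$. This uses the well-filteredness of $X$. The intersection is nonempty: otherwise $\bigcap_d K_d\subseteq\emptyset$ would force some $K_d=\emptyset$. It is compact: for any open cover $\{U_j\}$ of the intersection, applying well-filteredness to $\bigcup_j U_j$ gives some $K_d\subseteq\bigcup_j U_j$, so $K_d$ is covered by finitely many $U_j$, and hence so is the intersection. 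It is clearly saturated. A byproduct is the key fact that $\Box U\in\sigma(Q(X))$ for every $U\in\mathcal O(X)$, so the upper Vietoris topology is coarser than the Scott topology on $Q(X)$.

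Next, we take a filtered family $\{\mathcal K_d : d\in D\}\subseteq Q(P_{\sigma}(X))$ and a Scott open set $\mathcal U\subseteq Q(X)$ with $\bigcap_d\mathcal K_d\subseteq\mathcal U$. We argue by contradiction, supposing that for every $d$ there is some $A_d\in\mathcal K_d\setminus\mathcal U$. The goal is a Rudin-type or Zorn-type argument, as in the standard proof that Scott spaces of certain dcpos are well-filtered, producing a point of $\bigcap_d\mathcal K_d$ outside $\mathcal U$. Concretely, we consider the set $\mathcal C=Q(X)\setminus\mathcal U$, which is Scott closed and meets every $\mathcal K_d$. By a Zorn argument on Scott closed subsets of $\mathcal C$ that meet every $\mathcal K_d$, we obtain a minimal such closed set $\mathcal A$; by the usual argument $\mathcal A$ is irreducible in $P_{\sigma}(X)$. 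Let $K^{*}=\bigcup\mathcal A$, or more precisely the candidate element $\bigcap$-type limit $\bigcap\{\ua G\}$ formed from $\mathcal A$.

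The crucial step is to show that the irreducible Scott closed set $\mathcal A$ has a "limit" $K_0\in Q(X)$ with $\mathcal A\subseteq{\da}K_0$ and $K_0\in\mathcal A$. This would give $K_0\in\mathcal C$, and minimality would then give $K_0\in\mathcal K_d$ for all $d$, contradicting $\bigcap_d\mathcal K_d\subseteq\mathcal U$. The natural candidate is $K_0=\bigcap\mathcal A$ in $X$, the Smyth-order supremum of $\mathcal A$. Since $\mathcal A$ is irreducible in the Scott topology and each $\Box U$ is Scott open, $\mathcal A$ is also irreducible in $P_S(X)$. We then want a well-filtered analogue of the Heckmann--Keimel--Schalk condition (Theorem~\ref{theor-Schalk-Heckman-Keimel-theorem}(2)), namely that $\bigcap\mathcal A\subseteq U$ implies $G\subseteq U$ for some $G\in\mathcal A$. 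This would yield nonemptiness and compactness of $K_0$.

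I expect this step to be the main obstacle. Well-filteredness only guarantees that condition for filtered families, not for arbitrary irreducible ones. To handle it, I would first try to avoid irreducible sets by following the proof strategy of Xi--Lawson/Xu: work only with directed subsets, using the fact from the first step that directed sups are intersections, and replace the minimal closed set by a maximal chain in $\mathcal C$ chosen via Zorn's lemma. A chain is filtered in $X$, so well-filteredness applies to it directly. The remaining technical task is to verify that this chain's intersection stays in $\mathcal C$, which holds because $\mathcal C$ is Scott closed, and that it meets each $\mathcal K_d$. The latter would use compactness of $\mathcal K_d$ against the Scott open sets ${Q(X)\setminus}{\da}G$.
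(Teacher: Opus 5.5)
Your argument has a genuine gap at exactly the step you flag. The paper gives no proof of this statement (it is quoted from the literature), so the proposal has to stand on its own. You never show that the minimal Scott closed set $\mathcal A\subseteq\mathcal C$ has a greatest element, and the fallback does not supply one. A maximal chain in $\mathcal C$ carries no information about the family $\{\mathcal K_d : d\in D\}$. What you actually need is a directed subset of $\mathcal C$ meeting every $\mathcal K_d$, so that its supremum lies in $\mathcal C\cap\bigcap_{d}\mathcal K_d$. For a dcpo, having such directed sets for all such data is equivalent to well-filteredness of its Scott space, which is what you are proving, and it fails for general dcpos. Rudin's lemma supplies such sets only for finitely generated upper sets, which the $\mathcal K_d$ need not be. Your suggested use of compactness against the sets $Q(X)\setminus\da G$ also runs the wrong way. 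If $A^*=\bigcap_\alpha A_\alpha$ is the supremum of a chain, then $\da A_\alpha\subseteq\da A^*$. So $\mathcal K_d\cap\da A^*=\emptyset$ already forces $\mathcal K_d\cap\da A_\alpha=\emptyset$ for every $\alpha$, and no finite-subcover argument yields a contradiction unless the chain is already known to meet $\mathcal K_d$.

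The missing idea is that $\mathcal A$ need not satisfy a Heckmann--Keimel--Schalk condition as an arbitrary irreducible set. It comes with a filtered family to which the well-filteredness of $X$ applies directly.
\begin{itemize}
\item For each $d$, $\mathcal A\cap\mathcal K_d$ is a nonempty Scott compact subset of $Q(X)$. It is therefore compact in $P_S(X)$, since each $\Box U$ is Scott open by your first step.
\item Hence $G_d=\bigcup(\mathcal A\cap\mathcal K_d)\in Q(X)$.
\item Also $G_d\in\mathcal A$, because $G_d\sqsubseteq K$ for any $K\in\mathcal A\cap\mathcal K_d$ and $\mathcal A$ is a lower set.
\item The family $\{G_d : d\in D\}$ is filtered. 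So $G=\bigcap_{d}G_d\in Q(X)$ is its directed supremum, and $G\in\mathcal A$ because $\mathcal A$ is Scott closed.
\item Take any $x\in G$. The set $\{A\in\mathcal A : x\in A\}=\mathcal A\setminus\Box(X\setminus\da x)$ is Scott closed. It meets every $\mathcal K_d$, because $x\in G_d$. By minimality it is all of $\mathcal A$.
\item Hence $G\subseteq A$ for every $A\in\mathcal A$. So $\mathcal A=\da G$, and $G=\bigcap\mathcal A$ is exactly your candidate $K_0$.
\item Each $\mathcal K_d$ is an upper set meeting $\mathcal A$, so $G\in\mathcal C\cap\bigcap_{d}\mathcal K_d$. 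This contradicts $\bigcap_{d}\mathcal K_d\subseteq\mathcal U$.
\end{itemize}
Irreducibility of $\mathcal A$ is never used.
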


\begin{proposition}\label{prop-coherent-WF-Scott-strong-$R$-space} For a coherent well-filtered space $X$, its Scott power space $P_{\sigma}(X)$ is a coherent well-filtered space. Therefore, $P_{\sigma}(X)$ is a strong $R$-space, and hence it is strongly well-filtered.
\end{proposition}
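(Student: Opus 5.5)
The plan is to reduce coherence of $P_{\sigma}(X)=\Sigma~\!\!Q(X)$ to the criterion in Proposition \ref{prop-Scott-WF-coherent}, with well-filteredness supplied by Proposition \ref{prop-WF-Scott-WF}. The remaining claims then follow from Proposition \ref{prop-strong-R-is-strong-WF}(2) and (1).

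First, I will check that $Q(X)$ with the Smyth order is a dcpo, since Proposition \ref{prop-Scott-WF-coherent} requires this. A directed family $\{K_d : d\in D\}$ in $(Q(X),\sqsubseteq)$ is a filtered family of nonempty compact saturated sets. Applying well-filteredness of $X$ with $U=\emptyset$ gives $\bigcap_{d\in D}K_d\neq\emptyset$. By the standard fact that in a well-filtered space a filtered intersection of compact saturated sets is compact, $\bigcap_{d\in D}K_d\in Q(X)$; it is saturated because it is an intersection of upper sets. This intersection is clearly the supremum of the family in the Smyth order. By Proposition \ref{prop-WF-Scott-WF}, $\Sigma~\!\!Q(X)$ is well-filtered, so the hypotheses of Proposition \ref{prop-Scott-WF-coherent} hold.

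Second, fix $K_1, K_2\in Q(X)$. I claim that in the Smyth order
$$\ua_{Q(X)}K_1\cap \ua_{Q(X)}K_2=\{K\in Q(X) : K\subseteq K_1\cap K_2\}.$$
If $K_1\cap K_2=\emptyset$, this set is empty and hence compact. Otherwise $K_1\cap K_2$ is compact by coherence of $X$ and saturated as an intersection of upper sets, so $K_1\cap K_2\in Q(X)$. The set above is then the principal filter $\ua_{Q(X)}(K_1\cap K_2)$. A principal upper set $\ua p$ is compact in any topology whose open sets are upper sets, since any open set containing $p$ already contains $\ua p$. Hence by Proposition \ref{prop-Scott-WF-coherent}, $\Sigma~\!\!Q(X)$ is coherent.

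Combining these steps, $P_{\sigma}(X)$ is a coherent well-filtered space. Proposition \ref{prop-strong-R-is-strong-WF}(2) then gives that it is a strong $R$-space, and Proposition \ref{prop-strong-R-is-strong-WF}(1) gives that it is strongly well-filtered. The only step needing any care is the first one, namely that filtered intersections in $Q(X)$ stay nonempty and compact, so that $Q(X)$ is a dcpo. This uses only well-filteredness of $X$ and the standard compactness result for filtered intersections, so I do not expect a serious obstacle.
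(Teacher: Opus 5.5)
Your proposal is correct and follows essentially the same route as the paper. Well-filteredness comes from Proposition \ref{prop-WF-Scott-WF}, and coherence comes from Proposition \ref{prop-Scott-WF-coherent} via $\ua_{Q(X)}K_1\cap\ua_{Q(X)}K_2=\ua_{Q(X)}(K_1\cap K_2)$ (or $\emptyset$). The last two claims then follow from Proposition \ref{prop-strong-R-is-strong-WF}(2) and (1). You also check explicitly that $Q(X)$ is a dcpo, which is a hypothesis of Proposition \ref{prop-Scott-WF-coherent}; the paper leaves this implicit, and your argument for it is sound.
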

\begin{proof} By Proposition \ref{prop-WF-Scott-WF}, $P_{\sigma}(X)$ is a well-filtered space. Now we show that $P_{\sigma}(X)$ is coherent. For $K_1, K_2\in Q(X)$, by the coherence of $X$, $K_1\cap K_2\in Q(X)\cup \{\emptyset\}$. So ${\ua_{Q(X)}}K_1\bigcap {\ua_{Q(X)}}K_2={\ua_{Q(X)}}K_1\cap K_2\in Q{P_S(X)}\cup \{\emptyset\}$. Hence $P_{\sigma}(X)$ is coherent by Proposition \ref{prop-Scott-WF-coherent}, and consequently, $P_{\sigma}(X)$ is a strong $R$-space by Proposition \ref{prop-strong-R-is-strong-WF}(2). Therefore, $P_{\sigma}(X)$ is strongly well-filtered by Proposition \ref{prop-strong-R-is-strong-WF}(1).
\end{proof}

From Proposition \ref{prop-strongly-WF-is-WF}(4) and  Proposition \ref{prop-coherent-WF-Scott-strong-$R$-space} we deduce the following.

\begin{corollary}\label{cor-T2-Scott-strong-$R$-space} For a $T_2$-space $X$, its Scott power space $P_{\sigma}(X)$ is a coherent well-filtered space. Therefore, $P_{\sigma}(X)$ is a strong $R$-space, and hence it is strongly well-filtered.
\end{corollary}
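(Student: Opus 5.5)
The corollary follows by composing two earlier results. The plan is to check that a $T_2$-space satisfies the hypothesis of Proposition \ref{prop-coherent-WF-Scott-strong-$R$-space} and then read off all three conclusions from that proposition.

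\emph{Hypothesis.} Let $X$ be a $T_2$-space. By Proposition \ref{prop-strongly-WF-is-WF}(4), which also appears as Proposition \ref{prop-strong-R-is-strong-WF}(3), $X$ is coherent and sober.

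\emph{Well-filteredness.} It remains to see that $X$ is well-filtered. I would use that every sober space is well-filtered. If one prefers to stay within results stated in the paper, there is a second route. Proposition \ref{prop-strongly-WF-is-WF}(4) already says that $X$ is strongly well-filtered. Proposition \ref{prop-strongly-WF-is-WF}(1) then gives that $X$ is well-filtered.

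\emph{Conclusion.} So $X$ is a coherent well-filtered space, and Proposition \ref{prop-coherent-WF-Scott-strong-$R$-space} applies. It says directly that $P_{\sigma}(X)$ is a coherent well-filtered space, that it is a strong $R$-space, and that it is strongly well-filtered. These are exactly the three assertions of the corollary.

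There is no real obstacle here. The only point to check is that the hypothesis ``coherent well-filtered'' is available for $T_2$-spaces, and that is supplied verbatim by the cited results.
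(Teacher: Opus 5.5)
Your proposal is correct and is essentially the paper's own argument. The paper also deduces the corollary by noting that every $T_2$-space is coherent and sober, hence (strongly) well-filtered. It then applies the proposition that the Scott power space of a coherent well-filtered space is coherent well-filtered, and therefore a strong $R$-space and strongly well-filtered.
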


\begin{lemma}\label{prop-V=S} (\cite[Theorem 5.7 and Corollaries 5.4, 5.6, 5.9-5.11]{Xu-Yang-2021}) Let $X$ be a $T_0$-space satisfying one of the following conditions:
\begin{enumerate}[\rm (1)]
\item $X$ is a core-compact (especially, locally compact) well-filtered space.
\item $X$ is a second-countable well-filtered space.
\item $X$ is a well-filtered space and $P_S(X)$ is first-countable.
 \item $X$ is a first-countable well-filtered space and $\mathrm{min}(K)$ is countable for any $K\in Q(X)$.
 \item $X$ is a first-countable well-filtered space in which all compact subsets are countable.
 \item $X$ is a countable, first-countable and well-filtered space.
 \item  $X$ is a metric space.
\end{enumerate}
\noindent Then the upper Vietoris topology and the Scott topology on $Q(X)$ coincide, that is, $P_S(X)=P_{\sigma}(X)$.
\end{lemma}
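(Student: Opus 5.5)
This statement is quoted from \cite{Xu-Yang-2021}, so in the paper it is simply invoked. If I had to reconstruct it, I would use one common strategy for all seven conditions. First I would show that the upper Vietoris topology is always coarser than the Scott topology on $Q(X)$ when $X$ is well-filtered. Then I would show the reverse inclusion under each extra hypothesis.

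\emph{First inclusion.} The specialization order of $P_S(X)$ is the Smyth order, so each $\Box U$ is an upper set. Take a directed family $\{K_d\}$ in the Smyth order, i.e.\ a filtered family in $Q(X)$. Well-filteredness of $X$ gives two facts:
\begin{itemize}
\item $\bigcap_d K_d\in Q(X)$, so it is the supremum of the family;
\item if $\bigcap_d K_d\subseteq U$, then $K_d\subseteq U$ for some $d$.
\end{itemize}
Hence each $\Box U$ is Scott open. Equivalently, by Theorem \ref{Smythwf}, $P_S(X)$ is a $d$-space, so $\mathcal O(P_S(X))\subseteq\sigma(Q(X))$.

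\emph{Reverse inclusion.} Take $\mathcal U\in\sigma(Q(X))$ and $K\in\mathcal U$. I would try to produce an open $U\supseteq K$ with $\Box U\subseteq\mathcal U$. The argument depends on the condition.

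\begin{itemize}
\item \textbf{Case (1).} In a locally compact well-filtered space, which is sober by Theorem \ref{theor-LC-sober=LC-wf=CC-sober}, $K$ is the filtered intersection of the compact saturated neighbourhoods $K'$ with $K\subseteq\ii K'$. Scott openness then gives one such $K'\in\mathcal U$, and $U=\ii K'$ works, since every $G\in\Box U$ satisfies $K'\sqsubseteq G$. The core-compact case reduces to this via Theorem \ref{theor-LC-sober=LC-wf=CC-sober}.
\item \textbf{Cases (2)--(6).} These are countability conditions. The plan is to argue by contradiction. Suppose that for every open $U\supseteq K$ there is $G_U\in\Box U\setminus\mathcal U$. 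Using a countable base of neighbourhoods of $K$ in $P_S(X)$, supplied by first-countability of $P_S(X)$ or built from countable bases at the countably many points of $\min(K)$, choose a decreasing sequence $U_n$ and $G_n\in\Box U_n\setminus\mathcal U$. Then form the sets $H_n={\ua}\bigl(K\cup\bigcup_{m\ge n}G_m\bigr)$. These should be compact saturated and filtered, with $\bigcap_n H_n=K$ by well-filteredness. Scott openness then forces $H_n\in\mathcal U$ for some $n$. Since $H_n\subseteq G_n$, i.e.\ $H_n\sqsubseteq G_n$, and $\mathcal U$ is an upper set, this gives $G_n\in\mathcal U$, a contradiction.
\item \textbf{Case (7).} Metric spaces are first-countable and $T_2$, hence sober and well-filtered, and their compact sets are separable. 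So case (7) should follow from case (4) or (5).
\end{itemize}

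\emph{Main obstacle.} The hard step is the compactness of $H_n$ in the countability cases. It needs $G_m\to K$ in a suitable sense, so that any open cover of $H_n$ that covers $K$ already covers all but finitely many $G_m$. This is exactly where the countable neighbourhood base at $K$, and hence conditions (2)--(6), is used. I expect this step to be the most delicate and would follow \cite{Xu-Yang-2021} for it.
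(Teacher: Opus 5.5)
\paragraph{Review.} The paper does not prove this lemma; it only cites \cite{Xu-Yang-2021}. Your first inclusion (via Theorem \ref{Smythwf}), your argument for case (1), and your sequence argument for (2)--(6) are sound. The real error is your reduction of case (7).

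\textbf{Case (7).} A metric space is $T_1$, so $\mathrm{min}(K)=K$ for every $K\in Q(X)$. A compact metric set such as $[0,1]$ is uncountable, and separability of $K$ is not what (4) or (5) require, so neither condition holds in general. What your argument actually needs is a countable neighbourhood base of $K$ in $X$, and in a metric space you can check this directly. Let $V\supseteq K$ be open with $V\neq X$. The map $x\mapsto d(x,X\setminus V)$ is continuous and positive on the compact set $K$, so it is bounded below by some $\varepsilon>0$. Hence the open $\varepsilon$-neighbourhood of $K$ lies in $V$, and the $1/n$-neighbourhoods of $K$ form a countable base. This is exactly the first-countability of $P_S(X)$ in Lemma \ref{lem-metric-space-Smyth-power-space-first-countable}. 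Since metric spaces are $T_2$, hence sober and well-filtered, (7) reduces to (3).

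\textbf{Compactness of $H_n$.} The step you call delicate is immediate once $\{U_n\}$ is a decreasing neighbourhood base of $K$. Given an open cover of $H_n$, finitely many members cover $K$. Their union $V$ contains some $U_N$, hence every $G_m$ with $m\ge N$. The finitely many compact sets $G_m$ with $n\le m<N$ then need only finitely many more members.

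\textbf{Why $\bigcap_n H_n=K$.} This equality does not come from well-filteredness. It comes from the neighbourhood-base property and the saturatedness of $K$: a point lying in infinitely many $G_m$ lies in every $U_n$, hence in $K$.

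\textbf{Cases (4) and (2).} In case (4), take the countable base to be the finite unions of basic neighbourhoods of points of $\mathrm{min}(K)$ that cover $K$. This uses $K\subseteq{\ua}\mathrm{min}(K)$ and the compactness of $K$. Unions $\bigcup_{x\in\mathrm{min}(K)}W_{x,n}$ with a common index $n$ need not form a base. Case (2) works the same way, using finite unions of members of a countable base of $X$.
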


By Theorem \ref{theor-Schalk-Heckman-Keimel-theorem}, Proposition \ref{prop-CI-WF-sober}, Theorem \ref{theor-LC-sober=LC-wf=CC-sober} and Lemma \ref{prop-V=S}, we get the following result.

\begin{proposition}\label{prop-under-conditions-strong-$R$-space} Let $X$ be a coherent $T_0$-space satisfying one of the following conditions:
\begin{enumerate}[\rm (1)]
\item $X$ is a core-compact (especially, locally compact) well-filtered space.
\item $X$ is a second-countable well-filtered space.
\item $X$ is a well-filtered space and $P_S(X)$ is first-countable.
 \item $X$ is a first-countable well-filtered space and $\mathrm{min}(K)$ is countable for any $K\in Q(X)$.
 \item $X$ is a first-countable well-filtered space in which all compact subsets are countable.
 \item $X$ is a countable, first-countable and well-filtered space.
\end{enumerate}
\noindent Then $P_{S}(X)=P_{\sigma}(X)$, and $P_S(X)$ is both a sober space and a strong $R$-space. Hence it is strongly well-filtered.
\end{proposition}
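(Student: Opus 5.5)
In each of the six cases the plan is to reduce to Proposition \ref{prop-coherent-WF-Scott-strong-$R$-space}. It says that for a coherent well-filtered space $X$ the Scott power space $P_{\sigma}(X)$ is coherent, well-filtered, a strong $R$-space and strongly well-filtered. Every one of conditions (1)--(6) includes the hypothesis that $X$ is well-filtered, and coherence of $X$ is assumed throughout. So Proposition \ref{prop-coherent-WF-Scott-strong-$R$-space} applies in every case, and $P_{\sigma}(X)$ is a strong $R$-space.

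\textbf{Step 1: $P_S(X)=P_{\sigma}(X)$.} Conditions (1)--(6) are literally conditions (1)--(6) of Lemma \ref{prop-V=S}. That lemma gives $P_S(X)=P_{\sigma}(X)$ directly. Hence $P_S(X)$ is a strong $R$-space. By Proposition \ref{prop-strong-R-is-strong-WF}(1) it is then strongly well-filtered.

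\textbf{Step 2: sobriety of $P_S(X)$.} By the Heckmann--Keimel--Schalk Theorem \ref{theor-Schalk-Heckman-Keimel-theorem}, it suffices to show that $X$ itself is sober. I would split into cases.
\begin{enumerate}[\rm (i)]
\item Case (1): $X$ is core-compact and well-filtered, so $X$ is sober by Theorem \ref{theor-LC-sober=LC-wf=CC-sober}.
\item Cases (4), (5), (6): $X$ is first-countable and well-filtered, so $X$ is sober by Proposition \ref{prop-CI-WF-sober}.
\item Case (2): a second-countable space is first-countable, so Proposition \ref{prop-CI-WF-sober} again applies.
\item Case (3): here first-countability is assumed only for $P_S(X)$, so an extra step is needed.
\end{enumerate}

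\textbf{The main obstacle is case (3).} Since $X$ is well-filtered, $P_S(X)$ is well-filtered by Theorem \ref{Smythwf}. As $P_S(X)$ is also first-countable, Proposition \ref{prop-CI-WF-sober} shows that $P_S(X)$ is sober, which is what Step 2 needs. (By Theorem \ref{theor-Schalk-Heckman-Keimel-theorem}, $X$ is then sober too, though this is not required.) This completes the plan; each remaining step is a direct citation.
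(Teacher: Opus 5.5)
Your proposal is correct and is essentially the paper's argument. Lemma \ref{prop-V=S} gives $P_S(X)=P_{\sigma}(X)$, the preceding result on Scott power spaces of coherent well-filtered spaces gives the strong $R$-property (hence strong well-filteredness), and sobriety follows from the Heckmann--Keimel--Schalk Theorem \ref{theor-Schalk-Heckman-Keimel-theorem} together with Proposition \ref{prop-CI-WF-sober} or Theorem \ref{theor-LC-sober=LC-wf=CC-sober}, and your treatment of case (3) (using Theorem \ref{Smythwf} and then applying Proposition \ref{prop-CI-WF-sober} directly to $P_S(X)$) is a valid way to fill in a step the paper leaves implicit; alternatively, first-countability of $P_S(X)$ at the points $\uparrow x$ already makes $X$ first-countable.
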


The following corollary follows directly from Proposition \ref{prop-strong-R-is-strong-WF}(3) and Proposition \ref{prop-under-conditions-strong-$R$-space}.

\begin{corollary}\label{cor-T2-countable-strong-$R$-space} Let $X$ be a $T_2$-space satisfying one of the following conditions:
\begin{enumerate}[\rm (1)]
\item $X$ is a core-compact (especially, locally compact) space.
\item $X$ is second-countable.
\item $P_S(X)$ is first-countable.
 \item $X$ is a first-countable space and $\mathrm{min}(K)$ is countable for any $K\in Q(X)$.
 \item $X$ is a first-countable space in which all compact subsets are countable.
  \item $X$ is a countable first-countable space.
\end{enumerate}
\noindent Then $P_{S}(X)=P_{\sigma}(X)$, and $P_S(X)$ is both a sober space and a strong $R$-space. Hence it is strongly well-filtered.
\end{corollary}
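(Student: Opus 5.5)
The corollary follows from Proposition \ref{prop-under-conditions-strong-$R$-space} once we check that every $T_2$-space $X$ is coherent and well-filtered. Beyond that, we only need to match each condition (1)--(6) of the corollary with the corresponding condition of the proposition.

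First, by Proposition \ref{prop-strong-R-is-strong-WF}(3) (equivalently Proposition \ref{prop-strongly-WF-is-WF}(4)), a $T_2$-space $X$ is coherent and sober. Sobriety implies well-filteredness; for instance, $X$ is strongly well-filtered by Proposition \ref{prop-strongly-WF-is-WF}(4), hence well-filtered by Proposition \ref{prop-strongly-WF-is-WF}(1). So $X$ is a coherent well-filtered $T_0$-space.

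Second, we transfer the conditions one by one. If $X$ is core-compact, it is a core-compact well-filtered space, which is condition (1) of Proposition \ref{prop-under-conditions-strong-$R$-space}. Second-countability gives condition (2). First-countability of $P_S(X)$ gives condition (3). A first-countable $X$ with $\mathrm{min}(K)$ countable for every $K\in Q(X)$ gives condition (4). First-countability together with countable compact sets gives condition (5). Finally, a countable first-countable $X$ gives condition (6).

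In each case Proposition \ref{prop-under-conditions-strong-$R$-space} yields $P_S(X)=P_\sigma(X)$, and shows that $P_S(X)$ is sober and a strong $R$-space. Proposition \ref{prop-strong-R-is-strong-WF}(1) then gives that $P_S(X)$ is strongly well-filtered. No step is a genuine obstacle; the only point needing care is the implication that a $T_2$-space is well-filtered, and that is already supplied by the cited propositions.
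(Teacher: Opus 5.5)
Your proposal is correct and follows the paper's own argument. The paper also derives the corollary directly from Proposition~\ref{prop-strong-R-is-strong-WF}(3), which makes a $T_2$-space coherent and sober and hence well-filtered, combined with Proposition~\ref{prop-under-conditions-strong-$R$-space}, with conditions (1)--(6) transferring verbatim.
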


\begin{lemma}\label{lem-metric-space-Smyth-power-space-first-countable} (\cite[Proposition 4.7]{Xu-Yang-2021}) For a metric space $(X, d)$, $P_S((X, d))$ is first-countable.
\end{lemma}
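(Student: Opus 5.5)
The plan is to write down an explicit countable neighbourhood base at each point of $P_S((X,d))$. First I will pin down the objects. A metric space is $T_1$, so its specialization order is equality and every subset is saturated. Hence $Q(X)$ is just the set of all nonempty compact subsets of $X$. By definition of the upper Vietoris topology, the sets $\Box U$ with $U\in\mathcal O(X)$ and $K\subseteq U$ form a neighbourhood base at $K\in Q(X)$. So it suffices to find, for each $K\in Q(X)$, countably many open sets $U_n\supseteq K$ that are cofinal, i.e.\ every open $U\supseteq K$ contains some $U_n$.

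For this I take the metric neighbourhoods
\[
U_n=B(K,1/n)=\{x\in X : d(x,K)<1/n\},\qquad n\in\mathbb N^+ .
\]
Each $U_n$ is open, since $x\mapsto d(x,K)$ is continuous (indeed $1$-Lipschitz), and $K\subseteq U_n$. So $\Box U_n$ is an open neighbourhood of $K$ in $P_S(X)$.

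The key step is cofinality. Let $U$ be open with $K\subseteq U$. If $U=X$, then every $U_n\subseteq U$. Otherwise the function $f(x)=d(x,X\setminus U)$ is continuous and strictly positive on $K$, because $X\setminus U$ is closed and disjoint from $K$. Since $K$ is compact and nonempty, $f$ attains a minimum $r>0$ on $K$. Choose $n$ with $1/n<r/2$. If $y\in U_n$, pick $k\in K$ with $d(y,k)<1/n$ (possible because $d(y,K)<1/n$). Then $d(y,X\setminus U)\ge d(k,X\setminus U)-d(y,k)>r-r/2>0$, so $y\in U$. Thus $U_n\subseteq U$, and hence $\Box U_n\subseteq\Box U$.

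Therefore $\{\Box B(K,1/n): n\in\mathbb N^+\}$ is a countable neighbourhood base at $K$, and $P_S((X,d))$ is first-countable. The only point requiring care is the positivity of the minimum distance from the compact set $K$ to the closed set $X\setminus U$. This uses compactness and nonemptiness of $K$ and is the standard Lebesgue-number-type argument, so I expect no real obstacle.
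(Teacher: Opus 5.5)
Your proof is correct. Because $X$ is $T_1$, $Q(X)$ is the set of all nonempty compact subsets, and the sets $\Box U$ with $K\subseteq U$ form a neighbourhood base at $K$. The positivity of $\min_{k\in K} d(k, X\setminus U)$ then shows that $\{\Box B(K,1/n) : n\in\mathbb N^+\}$ is a countable base at $K$.

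The paper gives no proof of this lemma; it simply imports it from \cite[Proposition 4.7]{Xu-Yang-2021}. Your argument is the standard one, so there is nothing to add.
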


By Corollary \ref{cor-T2-countable-strong-$R$-space} and Lemma \ref{lem-metric-space-Smyth-power-space-first-countable}, we get the following.

\begin{corollary}\label{cor-metric-space-strong-$R$-space} For a metric space $(X, d)$, $P_{S}(X)=P_{\sigma}(X)$, and $P_S(X)$ is both a sober space and a strong $R$-space. Hence $P_{S}(X)$ is strongly well-filtered.
\end{corollary}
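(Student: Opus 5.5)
The plan is to derive the statement from Corollary \ref{cor-T2-countable-strong-$R$-space}. Its hypotheses are a $T_2$-space together with one of the listed side conditions; here we use condition (3), that $P_S(X)$ is first-countable. So only two facts about a metric space $(X,d)$ need checking: $X$ is $T_2$, and $P_S(X)$ is first-countable.

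First, the metric topology is Hausdorff. Distinct points $x\neq y$ are separated by the open balls of radius $d(x,y)/2$ around each of them. Hence $X$ is a $T_2$-space, and by Proposition \ref{prop-strong-R-is-strong-WF}(3) it is moreover coherent and sober. Second, Lemma \ref{lem-metric-space-Smyth-power-space-first-countable} states directly that $P_S((X,d))$ is first-countable.

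Applying Corollary \ref{cor-T2-countable-strong-$R$-space} under condition (3) then gives the conclusions at once:
\begin{itemize}
\item $P_S(X)=P_{\sigma}(X)$;
\item $P_S(X)$ is sober and a strong $R$-space;
\item $P_S(X)$ is strongly well-filtered.
\end{itemize}

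As a cross-check, the same conclusion follows via Lemma \ref{prop-V=S}(7), which gives $P_S(X)=P_\sigma(X)$ directly for metric spaces. Sobriety of $P_S(X)$ then comes from Theorem \ref{theor-Schalk-Heckman-Keimel-theorem}, since $X$ is sober. The strong $R$-property comes from Corollary \ref{cor-T2-Scott-strong-$R$-space} applied to $P_\sigma(X)$.

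There is no real obstacle. The only substantive input is Lemma \ref{lem-metric-space-Smyth-power-space-first-countable}, which is quoted from the literature. Apart from that, the work is checking that condition (3) of Corollary \ref{cor-T2-countable-strong-$R$-space} is met.
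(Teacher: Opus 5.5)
Your proposal is correct and follows the paper's proof: a metric space is $T_2$, Lemma \ref{lem-metric-space-Smyth-power-space-first-countable} makes $P_S(X)$ first-countable, and condition (3) of Corollary \ref{cor-T2-countable-strong-$R$-space} then yields every conclusion. Your cross-check is also valid: Lemma \ref{prop-V=S}(7) gives $P_S(X)=P_\sigma(X)$, Theorem \ref{theor-Schalk-Heckman-Keimel-theorem} gives sobriety, and Corollary \ref{cor-T2-Scott-strong-$R$-space} gives the strong $R$-property.
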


\section{Conclusions and further work}

In this paper, we have instigated the hereditary property of strong $R$-spaces and the closedness of strong $R$-spaces under retracts and finite products, and demonstrated some relations between the property of being a strong $R$-space and the de Groot dual topology. We have also given some conditions under which the Smyth power space and Scott power space of a $T_0$-space is a strong $R$-space, and shown that if the Smyth power space $P_S(X)$ of a $T_0$-space $X$ is an $R$-space, then $X$ is a strong $R$-space. Hence the Smyth power construction does not preserve the property of being an $R$-space. But we do not know whether the Smyth power construction and Scott power construction preserve the property of being a strong $R$-space.

We naturally pose the following two questions:

\begin{question}\label{ques-strong-$R$-space-Smyth-power-space-$R$-space} For a strong $R$-space $X$, is the Smyth power space $P_S(X)$ an $R$-space? Further, is $P_S(X)$ a strong $R$-space?
\end{question}

\begin{question}\label{ques-strong-$R$-space-Scott-power-space-$R$-space} For a strong $R$-space $X$, is the Scott power space $P_{\sigma}(X)$ a strong $R$-space?
\end{question}

The function spaces and categorical properties of strong $R$-spaces also deserve further investigation.

\end{document}